\newtheorem{theorem}{Theorem}
\newtheorem{lemma}{Lemma}
\newcommand{\EHR}{{\mathrm{EHR}}}
\newcommand{\PNF}{{\rm{PNF}}}
\newcommand{\ch}{{\mathrm{ch}}}
\newcommand{\q}{{\mathrm{q}}}
\newcommand{\NE}{{\mathrm{NE}}}
\newcommand{\defeq}{\mathrel{\mathop:}=}
\newcommand{\cl}{\mathrm{cl}}
\newcommand{\x}{\mathbf{x}}
\newcommand{\y}{\mathbf{y}}
\newcommand{\z}{\mathbf{z}}
\begin{document}

\begin{center}{\Large First order sentences about random graphs: small number of alternations~\footnote{M.E. Zhukovskii is financially supported by the Ministry of Education and Science of the Russian Federation (the Agreement 02.A03.21.0008) and grants of the Russian Foundation for Basic Research 16-31-60052 and 15-01-03530.}}
\end{center}


\begin{center}{\large A.D. Matushkin, M.E. Zhukovskii~\footnote{Moscow Institute of Physics and Technology, Laboratory of Advanced Combinatorics and Network Applications
; RUDN University
}}
\end{center}

\vspace{0.5cm}

\begin{center}
{\bf Abstract}
\end{center}

Spectrum of a first order sentence is the set of all $\alpha$ such that $G(n,n^{-\alpha})$ does not obey zero-one law w.r.t. this sentence. We have proved that the minimal number of quantifier alternations of a first order sentence with infinite spectrum equals 3. We have also proved that the spectrum of a first-order sentence with a quantifier depth 4 has no limit points except possibly the points $1/2$ and $3/5$.

\section{Previous results on zero-one laws}
\label{laws}

In this paper, we consider first order sentences about graphs (a signature consists of two predicates $\sim$ (adjacency) and $=$ (equality) of arity $2$)~\cite{Strange,Survey}. Recall that a {\it quantifier depth} $\q(\phi)$ of a formula $\phi$ is the number of quantifiers in the longest past of nested quantifiers in this formula. Let $G(n,p)$ be a binomial random graph~\cite{Bol,Janson} with $n$ vertices and the probability $p$ of appearing of an edge. We say that $G(n,p)$ {\it obeys zero-one law w.r.t. a first order sentence $\phi$} , if either a.a.s. (asymptotically almost surely) $G(n,p)\models\phi$, or a.a.s. $G(n,p)\models\neg(\phi)$.

Let $S(\phi)$ be the set of all $\alpha>0$ such that $G(n,n^{-\alpha})$ does not obey zero-one law w.r.t. $\phi$. This set is called {\it a spectrum of $\phi$}. In 1988~\cite{Shelah}, S.~Shelah and J.~Spencer proved that there are only rational numbers in $S(\phi)$ for any first order sentence $\phi$. In 1990~\cite{Spencer_inf}, J.~Spencer proved that there exists first order sentence with an infinite spectrum and the quantifier depth $14$. In his paper~\cite{Spencer_Thresholds_Via}, he also proved that, for a first order sentence $\phi$ with a quantifier depth $k$, $S(\phi)\cap(0,1/(k-1))=\varnothing.$ This result was strengthened by M.~Zhukovskii in 2012~\cite{Zhuk0}: $S(\phi)\cap(0,1/(k-2))=\varnothing$. In particular, for any first order sentence $\phi$ with the quantifier depth $3$, $S(\phi)\cap(0,1)=\varnothing$, and, for any first order sentence $\phi$ with the quantifier depth $4$, $S(\phi)\cap(0,1/2)=\varnothing$. Later~\cite{ZhukOstr}, it was proved that, for any first order sentence $\phi$, the set $S(\phi)\cap(1,\infty)$ is finite. In~\cite{Zhuk_inf}, a first order sentence with the quantifier depth $5$ and an infinite spectrum was obtained. This formula is given in the statement below.
\begin{theorem}
Let $m\in\mathbb{N}$, $\alpha=\frac{1}{2}+\frac{1}{2(m+1)}$ and $p=n^{-\alpha}$. Then the random graph $G(n,p)$ does not obey zero-one law w.r.t. the sentence
$$
 \phi=\exists x_1\exists x_2\,
 \left[\left(\exists x_3\exists x_4\,\left(\bigwedge_{1\leq i<j\leq 4}(x_i\sim x_j)\right)\right)
 \wedge(\varphi(x_1,x_2))\right],
$$
where
$$
\varphi(x_1,x_2)=\forall y_1\, ([y_1\sim x_1]\vee[y_1\sim x_2]\vee[\forall y_2(\neg[(y_2\sim x_1)\wedge(y_2\sim y_1)])]\vee
$$
$$
[\exists z\,(z\sim x_1)\wedge(z\sim x_2)\wedge(\forall u\,[(\neg[(u\sim z)\wedge(u\sim y_1)])\vee(u\sim x_1)\vee(u\sim x_2)])]).
$$
\label{4_or_5}
\end{theorem}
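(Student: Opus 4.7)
The plan is to relate $G(n,n^{-\alpha_m}) \models \phi$ to the appearance of a specific witness subgraph $H_m$ with $v(H_m)/e(H_m) = \alpha_m$, and then to deduce non-convergence by a Poisson approximation. First I would unpack $\varphi(x_1,x_2)$ combinatorially: its negation asserts the existence of a \emph{bad extension} $v$, i.e., a vertex with $v\not\sim x_1$ and $v\not\sim x_2$ admitting (i) some $w$ with $w\sim x_1$ and $w\sim v$, and (ii) for every common neighbor $c$ of $x_1$ and $x_2$, some $d$ with $d\sim c$, $d\sim v$, $d\not\sim x_1$ and $d\not\sim x_2$. Combining with the $K_4$-conjunct, $G\models\phi$ iff some pair $(x_1,x_2)$ lying in a $K_4$ admits no bad extension.

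The witness $H_m$ is then the labelled configuration on $\{x_1,x_2,c_1,\dots,c_m,v,w,d_1,\dots,d_m\}$ with edges $x_1x_2$, $c_1c_2$, $x_ic_j$ for all $i\in\{1,2\}$ and $j\in\{1,\dots,m\}$, $wx_1$, $wv$, and $d_ic_i, d_iv$ for each $i$. Here $\{x_1,x_2,c_1,c_2\}$ forms a $K_4$, $c_3,\dots,c_m$ are additional common neighbors of $x_1,x_2$, and $v,w,d_1,\dots,d_m$ witness the failure of $\varphi(x_1,x_2)$. Direct counting gives $v(H_m)=2m+4$ and $e(H_m)=4m+4$, so $v(H_m)/e(H_m)=(m+2)/(2m+2)=\alpha_m$, and $\mathbb{E}[\#H_m]=\Theta(1)$ at $p=n^{-\alpha_m}$.

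The density bookkeeping is then: the expected number of $K_4$-pairs $(x_1,x_2)$ with exactly $k$ common neighbors is of order $n^{(m-k)/(m+1)}$, and the expected number of bad extensions of any such pair is also of order $n^{(m-k)/(m+1)}$. Consequently, pairs with $k<m$ common neighbors a.a.s.\ carry a bad extension (hence cannot witness $\phi$); pairs with $k>m$ a.a.s.\ do not exist; and the count of pairs with $k=m$ converges to $\mathrm{Poisson}(\lambda_1)$ for some $\lambda_1>0$, each carrying an asymptotically independent $\mathrm{Poisson}(\lambda_2)$ number of bad extensions with $\lambda_2>0$. A Stein--Chen / method-of-moments calculation then yields $\Pr[G\models\phi] \to 1-\exp(-\lambda_1 e^{-\lambda_2}) \in (0,1)$.

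The main obstacle is the uniform ``a.a.s.\ every $K_4$-pair with $k<m$ common neighbors has a bad extension'' step, since the many bad-extension events share vertices; a careful Janson-type second-moment bound, or an extension-closure analysis in the spirit of Shelah--Spencer, is required. Additionally, $H_m$ is not strictly balanced (the ``core'' $K_4$ together with the $m-2$ extra common neighbors has the same $v/e$ ratio as $H_m$ itself), so the Poisson approximation for $\#H_m$ is cleanest when performed in two stages -- Poisson-counting the core, and then, conditionally on it, the bad-extension vertices in the natural rooted-extension framework -- rather than treating $H_m$ as a single subgraph.
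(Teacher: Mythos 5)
You cannot be graded against an in-paper argument here: the paper does not prove Theorem~\ref{4_or_5} at all, but imports it from~\cite{Zhuk_inf}, so the only meaningful comparison is with that cited proof, whose strategy your sketch essentially reconstructs (pairs inside a $K_4$ with the maximal number of common neighbours, plus Poisson behaviour of the critical structures). Your unpacking of $\neg\varphi$ is correct, and the exponent bookkeeping checks out: an edge with $k$ common neighbours, two of them adjacent, spans $k+2$ vertices and $2k+2$ edges, giving expected count $n^{(m-k)/(m+1)}$ at $\alpha=\frac12+\frac1{2(m+1)}$, and the minimal bad extension $(v,w,d_1,\dots,d_k)$ again adds $k+2$ vertices and $2k+2$ edges, so its expected number over a fixed base is also $n^{(m-k)/(m+1)}$. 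The two steps you flag as obstacles are indeed where the work lies, but neither is a conceptual gap: for $k<m$ one checks that the maximal relative density of the bad-extension over the base is $\frac{2k+2}{k+2}<\frac{2m+2}{m+2}=\frac1\alpha$ (intermediate pieces are even sparser), so the uniform statement ``a.a.s.\ every undersaturated $K_4$-pair has a bad extension'' is exactly a generic-extension statement of the type given by Theorems~\ref{existence_and_extension} and~\ref{double-extension} (the required non-adjacencies and the exclusion of coalesced, hence strictly denser, witnesses are the standard lower-order issues); and your two-stage Poisson plan is the right one, since the core (edge plus $m$ common neighbours, two adjacent) is strictly balanced even though $H_m$ is only balanced.

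Two smaller remarks. First, to contradict the zero-one law you do not need the exact limit $1-\exp(-\lambda_1e^{-\lambda_2})$, which is the most delicate part of your plan: it suffices to show that with probability bounded away from $0$ there is a saturated pair carrying no bad extension (so $\mathsf{P}(\phi)\not\to0$), and that with probability bounded away from $0$ there is no saturated pair at all while a.a.s.\ every undersaturated pair has a bad extension and no oversaturated pair exists (so $\mathsf{P}(\phi)\not\to1$); this substantially reduces the Stein--Chen burden. Second, your own bookkeeping shows the statement needs $m\ge2$: for $m=1$ one has $\alpha=3/4>2/3$, a.a.s.\ there is no $K_4$ whatsoever, hence $\mathsf{P}(\phi)\to0$ and your claim $\lambda_1>0$ fails (a $K_4$-pair always has at least two common neighbours); this is a defect of the statement as quoted rather than of your argument, and it is harmless for the intended conclusion about infinite spectra, but a complete write-up should note the restriction.
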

So, a minimal quantifier depth of a first order sentence with an infinite spectrum equals either $4$, or $5$.

Note that the maximal number of quantifier alternations over all sequences of nested quantifiers in $\phi$ equals $3$ (we call this value the {\it number of quantifier alternations of $\phi$}). It is essential that all the negations are aplied to atomic formulas only. A prenex normal form of $\phi$ with the quantifier depth $8$ is given below
\begin{equation}
 \tilde\phi=\exists x_1\exists x_2\exists x_3\exists x_4\forall y_1\forall y_2\exists z\forall u \,
 \left[\left(\bigwedge_{1\leq i<j\leq 4}(x_i\sim x_j)\right)
 \wedge(\tilde\varphi(x_1,x_2,y_1,y_2,z,u))\right],
\label{PNF_example}
\end{equation}
where
$$
\tilde\varphi(x_1,x_2,y_1,y_2,z,u)=[y_1\sim x_1]\vee[y_1\sim x_2]\vee[\neg((y_2\sim x_1)\wedge(y_2\sim y_1))]\vee
$$
$$
[(z\sim x_1)\wedge(z\sim x_2)\wedge(\neg[(u\sim z)\wedge(u\sim y_1)])]\vee[u\sim x_1]\vee[u\sim x_2].
$$
This raises the following questions.
\begin{enumerate}
\item What is the minimal quantifier depth of a first order sentence with an infinite spectrum, 4 or 5?
\item What is the minimal number of quantifier alternations of a first order sentence with an infinite spectrum, 3 or less?
\item What is the minimal quantifier depth of a first order sentence in a prenex normal form with an infinite spectrum, 4, 5, 6, 7 or 8?
\end{enumerate}

We partially answer these questions in Sections~\ref{S_spectra},~\ref{depth}. In Section 3 we state and prove some results on first order formulas, that are used in our answers. Section 2 is devoted to the limit probabilities of properties related to the presence of small subgraphs and extensions in the random graph.

\section{Existence and extension statements}

Let $\phi$ be a first order sentence in a prenex normal form. We call $\phi$ {\it an existence sentence}, if all quantifiers of $\phi$ equal $\exists$. We call $\phi$ {\it an extension sentence}, if the sequence of all quantifiers of $\phi$ equals $\forall\ldots\forall\exists\ldots\exists$. We say that an existence sentence expresses an existence property, and an extension sentence expresses an extension property. An asymptotical behavior of probabilities of the random graph existence and extension properties was widely studied in~\cite{Erdos,Bol_small,Rucinski,Spencer_ext}. We summarize this study in the result given below.

Let $G,H$ be two graphs such that $H\subset G$, $V(H)=\{a_1,\ldots,a_s\}$, $V(G)\setminus V(H)=\{b_1,\ldots,b_m\}$, $s,m\geq 1$. Let $\rho(H)$ be a maximal fraction $e(Q)/v(Q)$ over all subgraphs $Q\subset H$ ($\rho(H)$ is called {\it the maximal density} of $H$). Here $e(Q),v(Q)$ denote the numbers of edges and vertices in $Q$ respectively. Let $\rho(G,H)$ be a maximal fraction $(e(Q)-e(H))/(v(Q)-v(H))$ over all $Q$ such that $H\subset Q\subset G$. We say that a graph has {\it the $(G,H)$-extension property}, if, for any its distinct vertices $y_1,\ldots,y_s$, there exist distinct vertices $x_1,\ldots,x_m$ such that, for all $i\in\{1,\ldots,s\}$, $j\in\{1,\ldots,m\}$, $y_i\neq x_j$ and the adjacency relation $a_i\sim b_j$ implies the adjacency relation $y_i\sim x_j$. 
\begin{theorem}
Let $\rho(H)\neq 0$, $p=n^{-\alpha}$. If $\alpha<1/\rho(H)$, then a.a.s. in $G(n,p)$ there is an induced copy of $H$. If $\alpha>1/\rho(H)$, then a.a.s. in $G(n,p)$ there is no copy of $H$.

Let $\rho(G,H)\neq 0$, $p=n^{-\alpha}$. If $\alpha<1/\rho(G,H)$, then a.a.s. $G(n,p)$ has the $(G,H)$-extension property. If $\alpha>1/\rho(G,H)$, then a.a.s. $G(n,p)$ does not have the $(G,H)$-extension property.
\label{existence_and_extension}
\end{theorem}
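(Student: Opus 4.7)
I would prove Theorem~\ref{existence_and_extension} by combining the classical first and second moment methods, in the spirit of~\cite{Erdos,Bol_small,Rucinski,Spencer_ext}. Throughout, $p=n^{-\alpha}$ and the relevant subgraph and extension counts are polynomial in $n$ and $p$, so all arguments reduce to elementary threshold computations.

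\emph{Subgraph appearance.} For $\alpha>1/\rho(H)$, I would pick $Q\subseteq H$ realising $e(Q)/v(Q)=\rho(H)$. The expected number of labelled copies of $Q$ in $G(n,p)$ is at most $n^{v(Q)}p^{e(Q)}=n^{v(Q)-\alpha e(Q)}=o(1)$, so Markov's inequality rules out a single copy of $Q$, hence of $H$, a.a.s. For $\alpha<1/\rho(H)$, I would apply the second moment method to the number $X$ of induced copies of $H$. The bound $\alpha<1/\rho(H)\le v(H)/e(H)$, together with $(1-p)^{O(1)}\to 1$, gives $\mathbb{E}X\to\infty$; and $\mathbb{E}X^2$ is split according to the isomorphism type of the intersection of two copies, yielding terms of order $\Theta(n^{2v(H)-v(F)}p^{2e(H)-e(F)})$ indexed by $F\subseteq H$. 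The inequality $e(F)/v(F)\le\rho(H)<1/\alpha$ forces every nontrivial-intersection term to be $o((\mathbb{E}X)^2)$, so Chebyshev delivers $X>0$ a.a.s.

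\emph{Extension.} For $\alpha>1/\rho(G,H)$, let $Q$ with $H\subset Q\subset G$ achieve the maximum in the definition of $\rho(G,H)$. I would fix a tuple $y_1,\dots,y_s$ of distinct vertices of $G(n,p)$ forming a copy of $H$ (whose existence follows from the first statement when needed). The expected number of completions $x_1,\dots,x_m$ turning this tuple into a copy of $Q$ is $O(n^{v(Q)-v(H)}p^{e(Q)-e(H)})=o(1)$, so Markov exhibits a $y$-tuple with no completion, witnessing the failure of the $(G,H)$-extension property. For $\alpha<1/\rho(G,H)$, I would apply the second moment method to the count $N(y_1,\dots,y_s)$ of valid extensions of a fixed tuple: $\mathbb{E}N\to\infty$ follows from $\alpha<1/\rho(G,H)$, and the second moment, split according to the isomorphism type of $\{x_1,\dots,x_m\}\cap\{x_1',\dots,x_m'\}$, is controlled by the maximality defining $\rho(G,H)$.

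The principal obstacle is that Chebyshev only controls $N(y)$ for a single tuple, which is too weak to treat all $n^s$ tuples required by the $(G,H)$-extension property. To close this gap I would invoke an exponentially strong concentration estimate, such as Janson's inequality or the Spencer--Ruci\'nski bounds from~\cite{Spencer_ext,Rucinski}, obtaining $|N(y)-\mathbb{E}N(y)|\le \mathbb{E}N(y)/2$ with probability $1-o(n^{-s})$ for every ``generic'' tuple (atypical tuples being themselves absent a.a.s.). A union bound over all $n^s$ tuples then yields the extension property a.a.s., completing the proof.
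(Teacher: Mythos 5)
Your sketch is essentially correct, but note that the paper does not prove Theorem~\ref{existence_and_extension} at all: it is stated explicitly as a summary of the classical results of Erd\H{o}s--R\'enyi, Bollob\'as, Ruci\'nski--Vince and Spencer, with the proofs delegated to the cited references. Your argument --- first moment on a densest subgraph $Q\subseteq H$ for non-existence, second moment over intersection types for the induced copy, Markov on completions of a fixed root tuple for failure of the extension property, and Janson/Spencer--Ruci\'nski exponential concentration plus a union bound over the $n^s$ tuples for the positive extension statement --- is exactly the standard route taken in those references, and you correctly identify the one genuinely delicate point (Chebyshev alone cannot survive the union bound over tuples). One small remark: with the paper's definition of the $(G,H)$-extension property the root tuple is arbitrary and need not span a copy of $H$, so in the non-extension direction you can simply fix any $s$-tuple and apply Markov; conditioning on the tuple forming a copy of $H$ is unnecessary (and could be unavailable when $\alpha>1/\rho(H)$).
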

It is not difficult to see that Theorem~\ref{existence_and_extension} implies finiteness of spectra of all existence and extension sentences (see Section~\ref{S_spectra}).

The next step is to consider sentences in prenex normal form that have 2 alternations. We call $\phi$ {\it a double-extension sentence}, if the sequence of all quantifiers of $\phi$ equals $\forall\ldots\forall\exists\ldots\exists\forall\ldots\forall$ (the respective properties are called {\it double-extension} as well). An asymptotical behavior of probabilities of the random graph double-extension properties was studied in~\cite{Alon,Zhuk_ext}.

Let $W,G,H$ be three graphs such that $H\subset G\subset W$, $V(H)=\{a_1,\ldots,a_s\}$, $V(G)\setminus V(H)=\{b_1,\ldots,b_m\}$, $V(W)\setminus V(G)=\{c_1,\ldots,c_r\}$, $s\geq 0$, $r,m\geq 1$. Assume that in $W$ there are edges between each connected component of $W|_{\{c_1,\ldots,c_r\}}$ and $W|_{\{b_1,\ldots,b_m\}}$. Let $\mathcal{W}$ be a finite set of graphs such that all $W\in\mathcal{W}$ satisfy the above conditions (but $r$ depends on $W$). We say that a graph has {\it the $(\mathcal{W},G,H)$-double-extension property}, if, for any its distinct vertices $y_1,\ldots,y_s$, there exist distinct vertices $x_1,\ldots,x_m$ such that, for all $W\in\mathcal{W}$ and all distinct vertices $z_1,\ldots,z_{r(W)}$,
\begin{itemize}
\item for all $i\in\{1,\ldots,s\}$, $j\in\{1,\ldots,m\}$, $y_i\neq x_j$ and the adjacency relation $a_i\sim b_j$ implies the adjacency relation $y_i\sim x_j$,
\item either there exists $h\in\{1,\ldots,r(W)\}$ and $i\in\{1,\ldots,s\}$ such that $[z_h=y_i]\vee[(z_h\nsim y_i)\wedge(c_h\sim a_i)]$,

or there exist $h\in\{1,\ldots,r(W)\}$ and $j\in\{1,\ldots,m\}$ such that $[z_h=x_j]\vee[(z_h\nsim x_j)\wedge(c_h\sim b_j)]$.
\end{itemize}
\begin{theorem}
\begin{sloppypar}
Let, for all $W\in\mathcal{W}$, $\rho(W,G)>\rho(G,H)>0$ and $p=n^{-\alpha}$, $\alpha\in (1/\rho(W,G),1/\rho(G,H))$. Then a.a.s. $G(n,p)$ has the $(\mathcal{W},G,H)$-double-extension property.
\end{sloppypar}
\label{double-extension}
\end{theorem}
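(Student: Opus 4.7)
The plan is to reduce the statement to a first-moment estimate for ``bad'' upward extensions, applied on top of the $(G,H)$-extension guaranteed by Theorem~\ref{existence_and_extension}. For each $W\in\mathcal{W}$, fix an intermediate subgraph $Q_W$ with $G\subset Q_W\subset W$ attaining the maximum density $\rho(Q_W,G)=\rho(W,G)$. Any $(W,G)$-extension of a tuple $(y_1,\dots,y_s,x_1,\dots,x_m)$ restricts to a $(Q_W,G)$-extension by keeping only the vertices playing the role of $Q_W\setminus G$, so it suffices to show that a.a.s., for every $y$, there exist $x$'s that form a $(G,H)$-extension of $y$ and admit no $(Q_W,G)$-extension on top for any $W\in\mathcal{W}$.

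For a fixed tuple $y$, let $N(y)$ be the number of $(G,H)$-extensions of $y$ in $G(n,p)$, and let $\mathrm{Bad}(y)$ count those extensions $x$ such that some $W\in\mathcal{W}$ and some $z$ yield a $(Q_W,G)$-extension above $(y,x)$. Since $\alpha<1/\rho(G,H)$, the second-moment strengthening of Theorem~\ref{existence_and_extension} gives a.a.s.\ $N(y)\ge\tfrac12\mu_N$ uniformly in $y$, where $\mu_N=\Theta(n^m p^{e(G)-e(H)})$. On the other hand, each ``bad'' pair $(x,z)$ with parameter $W$ determines a $(Q_W,H)$-extension of $y$, and a direct first-moment calculation yields
$$\mathbb{E}[\mathrm{Bad}(y)]\;\le\;\mu_N\cdot\sum_{W\in\mathcal{W}}n^{(v(Q_W)-v(G))(1-\alpha\rho(W,G))}.$$
The assumption $\alpha>1/\rho(W,G)$ for each $W$ makes every exponent on the right-hand side strictly negative, so $\mathbb{E}[\mathrm{Bad}(y)]=O(\mu_N\cdot n^{-\delta})$ for some constant $\delta>0$.

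The main obstacle is upgrading these single-tuple estimates to a simultaneous control over all $\Theta(n^s)$ choices of $y$, since a fixed $\delta>0$ is far too weak for a naive Markov-and-union-bound step. Following the strategy of~\cite{Alon,Zhuk_ext}, I would establish polynomial concentration of both $N(y)$ and $\mathrm{Bad}(y)$ via a second-moment computation (or a bounded-differences martingale), exploiting the near-independence of extensions rooted at vertex-disjoint tuples; here the structural hypothesis that every connected component of $W|_{\{c_1,\ldots,c_r\}}$ is joined to $W|_{\{b_1,\ldots,b_m\}}$ ensures that the upper extension is genuinely sensitive to the choice of the $x$'s, so that varying $x$ can indeed kill all potential $z$-completions. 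Such concentration, combined with a union bound over $y$, will yield a.a.s.\ $N(y)>\mathrm{Bad}(y)$ for every $y$ simultaneously; hence a ``good'' $x_1,\dots,x_m$ exists for each $y$, and the $(\mathcal{W},G,H)$-double-extension property holds.
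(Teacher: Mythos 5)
The paper itself contains no proof of Theorem~\ref{double-extension}: it is stated as a summary of known results on maximal (``generic'') extensions and is attributed to \cite{Alon,Zhuk_ext}, so your attempt has to be measured against that literature rather than against an argument in the text. Your outline follows the same first-moment/deletion philosophy, and its local ingredients are sound: passing to a densest intermediate graph $Q_W$ is legitimate (under the reading, consistent with \cite{Alon,Zhuk_ext}, that a violating tuple $z_1,\ldots,z_{r(W)}$ must realize \emph{all} edges of $W$ above $G$, not only the $c$--$a$ and $c$--$b$ adjacencies literally listed in the definition), the exponent $(v(Q_W)-v(G))(1-\alpha\rho(W,G))<0$ in your bound for $\mathbb{E}[\mathrm{Bad}(y)]$ is computed correctly, and the uniform lower bound $N(y)=\Theta(\mu_N)$ over all root tuples is exactly Spencer's counting-extensions theorem for $\alpha$-safe pairs, which is citable (though its proof is itself not a plain second-moment argument).

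The genuine gap is the step you flag yourself: uniformity of the bound on $\mathrm{Bad}(y)$ over the $\Theta(n^s)$ roots, and the tools you name do not deliver it. Chebyshev can only give a failure probability of order $\mathrm{Var}(\mathrm{Bad}(y))/\mu_N^2\geq c\,\mathbb{E}[\mathrm{Bad}(y)]/\mu_N^2=c\,n^{-\delta}/\mu_N$ (the covariances are nonnegative), and this need not be $o(n^{-s})$: for instance if $G$ is $H$ plus one pendant vertex and $\alpha$ is close to $1/\rho(G,H)$, then $\mu_N$ is a tiny power of $n$ while $s$ may be large, so the union bound over roots fails. A bounded-differences martingale fails as well, because adding a single edge can create polynomially many bad configurations, so the Lipschitz constants are of polynomial order; and ``near-independence for vertex-disjoint roots'' is irrelevant to a per-root upper-tail estimate. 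The proofs in \cite{Alon,Zhuk_ext} (and in Spencer's extension papers \cite{Spencer_ext}) do not establish per-root concentration of the number of spoiled extensions at all: they combine the counting-extensions theorem with deterministic consequences of a.a.s.\ local-sparsity statements (no bounded-size subgraphs of density exceeding $1/\alpha$, finite closures, a bounded number of rigid extensions over every tuple) to control the bad extensions of every root simultaneously. Without such an ingredient, or an upper-tail inequality whose failure probability is actually verified to be $o(n^{-s})$, your final union bound does not go through, so the argument is incomplete precisely at its decisive point.
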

We have proved that Theorems~\ref{existence_and_extension},~\ref{double-extension} imply the finiteness of spectra of double-extension sentences (see Section~\ref{S_spectra} as well).

So, we generalize the well-known results about existence, extension and double-extension properties and prove that spectra of all first order sentences with at most $2$ quantifier alternations are finite.

\section{Logical preliminaries}

\subsection{Some notations}

Recall that {\it a rooted tree} $T_R$ is a tree with one distinguished vertex $R$, which is called {\it the root}. If $R,\ldots,x,y$ is a simple path in $T_R$, then $x$ is called a parent of $y$ and $y$ is called a child of $x$. The relation of being {\it a descendant} is the transitive and reflexive closure of the relation of being a child. If $v\in V(T_R)$, then $T_R[v]$ denotes the subforest of $T_R$ spanned by the set of all descendants of $v$ (children of $v$ are its roots).\\

For two first order formulas $\phi_1(x_1,\ldots,x_s),\phi_2(y_1,\ldots,y_s)$ ($s\in\{0,1,2,\ldots\}$), we say that they are (asymptotically) {\it equivalent} (and write $\varphi_1\cong\varphi_2$), if there exists $n\in\mathbb{N}$ such that for any graph $G$ on at least $n$ vertices and any its vertices $v_1,\ldots,v_s$ either $G\models(\phi_1(v_1,\ldots,v_s))\wedge(\phi_2(v_1,\ldots,v_s))$, or $G\models(\neg(\phi_1(v_1,\ldots,v_s))\wedge(\neg(\phi_2(v_1,\ldots,v_s)))$. We say that a set of graphs $C$ is {\it a (asymptotical) first order property of a graph}, if there exists a first order sentence $\phi$ and $n\in\mathbb{N}$ such that, for any $G$ on at least $n$ vertices, $G\in C$ if and only if $G\models\phi$ (in this case, we say that $\phi$ {\it expresses} $C$). 

\subsection{Language $\mathcal{F}$}

It is easy to see that any first order formula (not necessarily sentence) is equivalent to a formula constructed of the following symbols: variables, relational symbols $\sim,=,\nsim,\neq$, conjunctions $\wedge$, disjunctions $\vee$ and quantifiers $\forall,\exists$. We denote the set of formulas in this language by $\mathcal{F}$.

Let us state a simple observation of formulas in $\mathcal{F}$.

\begin{lemma}
Let $Z\in\{\wedge,\vee\}$, $z_1,z_2\in\{\forall,\exists\}$. Then, for any two formulas $\varphi_1(x_1,\ldots,x_s)$, $\varphi_2(x_1,\ldots,x_m)\in\mathcal{F}$ (not necessarily with $s$ and $m$ free variables respectively),
$$
 [z_1 x_1\ldots z_1 x_s \,(\varphi_1(x_1,\ldots,x_s))]Z [z_2 x_1\ldots z_2 x_m \,(\varphi_2(x_1,\ldots,x_m))]\cong
$$
$$
 z_1 x_1\ldots z_1 x_s z_2 x_{s+1}\ldots z_2 x_{s+m} \, ([\varphi_1(x_1,\ldots,x_s)]Z[\varphi_2(x_{s+1},\ldots,x_{s+m})]).
$$
\label{two_PNF}
\end{lemma}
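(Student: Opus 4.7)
The proof plan reduces the lemma to two elementary prenex-normal-form rules: (a) bound variables may be alpha-renamed without changing meaning, and (b) whenever a variable $x$ has no free occurrence in a formula $\psi$, one has $(Qx\,\phi)Z\psi \cong Qx\,(\phi Z\psi)$ and $\psi Z(Qx\,\phi)\cong Qx\,(\psi Z\phi)$ for every $Q\in\{\forall,\exists\}$ and $Z\in\{\wedge,\vee\}$. Both facts follow by direct inspection of Tarski semantics on any nonempty structure.

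The first step is to use (a) to rewrite the left-hand side as
$$
[z_1 x_1\ldots z_1 x_s\,\varphi_1(x_1,\ldots,x_s)]\,Z\,[z_2 x_{s+1}\ldots z_2 x_{s+m}\,\varphi_2(x_{s+1},\ldots,x_{s+m})],
$$
and, if necessary, to rename the bound variables inside $\varphi_1,\varphi_2$ so that they are disjoint from $\{x_1,\ldots,x_{s+m}\}$. After this preparatory step, the variables $x_1,\ldots,x_s$ have no free occurrence in the right conjunct, and the variables $x_{s+1},\ldots,x_{s+m}$ have no free occurrence in $\varphi_1$; this is exactly the setting in which rule (b) is applicable.

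I then apply (b) iteratively: first $s$ times to move the $z_1$-quantifiers across $Z$ to the outside, and then $m$ times to pull the $z_2$-quantifiers out as well. Each individual step is licensed by the free-variable disjointness arranged above, and the outcome is exactly the formula on the right-hand side of the lemma. Note that, since the pull-outs preserve the order inside each block and the two blocks are independently moved, the claimed quantifier prefix $z_1 x_1\ldots z_1 x_s z_2 x_{s+1}\ldots z_2 x_{s+m}$ appears in the stated order.

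There is no serious obstacle. The main technical care is the initial alpha-renaming, which must be done once and for all so that every subsequent application of (b) meets its free-variable hypothesis; this also covers the case where $\varphi_1$ or $\varphi_2$ has fewer free variables than indicated, so that some of the outer quantifiers are vacuous. The threshold $n$ in the definition of $\cong$ is needed only to ensure that the universe is nonempty, which is the sole semantic side-condition of rule (b) (relevant precisely when a vacuous quantifier binds a variable not occurring in its body).
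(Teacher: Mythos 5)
Your proof is correct. The paper itself gives no argument for this lemma (it is stated as a ``simple observation'' and left unproved), and your route --- alpha-renaming to make the two variable blocks and all bound variables disjoint, followed by the iterated pull-out rules $(Qx\,\phi)Z\psi\cong Qx\,(\phi Z\psi)$ valid on nonempty structures when $x$ is not free in $\psi$ --- is the standard one; it in fact yields genuine logical equivalence on all graphs with at least one vertex, which is stronger than the asymptotic relation $\cong$ required, and it stays inside $\mathcal{F}$ since no negations are introduced. Your handling of vacuous quantifiers and of the order of the resulting prefix is also correct.
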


For a formula $\phi\in\mathcal{F}$, define its {\it nesting forest $F(\phi)$} in the following way.

\begin{itemize}

\item If $\phi$ is an atomic formula, then its nesting forest is an empty graph.

\item Consider a formula $\varphi(x)$. If it has an empty nesting forest, then the nesting forest of the formula $\phi=\exists x\,(\varphi(x))$ (the formula $\phi=\forall x\,(\varphi(x))$) is an isolated vertex labeled by $\exists$ (by $\forall$). This vertex is a trivial tree rooted in its only vertex. Otherwise, let $F(\varphi(x))=T^1_{t_1}\sqcup\ldots\sqcup T^m_{t_m}$, where $T^1_{t_1},\ldots,T^m_{t_m}$ are trees rooted in $t_1,\ldots,t_m$ respectively. Then the nesting forest of the formula $\phi=\exists x\,(\varphi(x))$ (the formula $\phi=\forall x\,(\varphi(x))$) is a tree obtained by adding a vertex $t$ (which is the root of this three) labeled by $\exists$ (by $\forall$) to $F(\varphi(x))$ and edges from $t$ to each of $t_1,\ldots,t_m$.

\item If $\phi=(\varphi_1)\wedge(\varphi_2)$ (or $\phi=(\varphi_1)\vee(\varphi_2)$), then $F(\phi)$ is the disjoint union of $F(\varphi_1)$, $F(\varphi_2)$.

\end{itemize}

Consider a formula $\phi(x_1,\ldots,x_s)\in\mathcal{F}$ and its nesting forest $F(\phi)=T^1_{t_1}\sqcup\ldots\sqcup T^m_{t_m}$ consisting of trees $T^1_{t_1},\ldots,T^m_{t_m}$ rooted in $t_1,\ldots,t_m$ respectively. Let $v$ be a vertex of $T^i_{t_i}$ for some $i\in\{1,\ldots,m\}$. Consider the forest $T^i_{t_i}[v]$. Let $V$ be the set of all vertices of $T^i_{t_i}$ such that $v$ is a descendant for each of them, $[V]:=V\cup\{v\}$. Obviously, $T^i_{t_i}|_{[V]}$ is the path $t_i\ldots v$. Each of the vertices of this path corresponds to a bound variable of $\phi$. Let $y_1,\ldots,y_r$ be these variables ($y_{i+1},y_i$ corresponds to a child and a parent respectively). Then $T^i_{t_i}[v]$ is the nested forest of a subformula $\varphi(x_1,\ldots,x_s,y_1,\ldots,y_r)$ of $\phi$. The formula $\varphi(x_1,\ldots,x_s,y_1,\ldots,y_r)$ is called {\it a nested subformula of $\phi$}, the forest $F(\varphi(x_1,\ldots,x_s,y_1,\ldots,y_r))$ is called {\it a nested subforest of $F(\phi)$}.

Note that the quantifier depth of $\phi$ is the length of the longest path starting in a root (we denote it by $\q(\phi)$). For a path in $F(\phi)$ starting in a root consider the number of labels alternations (the number of (unordered) pairs of neighbors $\forall\exists$ and $\exists\forall$). For example, the number of labels alternations of the path $\exists\forall\forall\exists\exists\forall$ equals $3$. The maximal number of labels alternations over all paths starting in roots of $F(\phi)$ is called {\it the number of quantifier alternations of $\phi$} (we denote it by $\ch(\phi)$).

\subsection{Normal forms}

A formula $\phi\in\mathcal{F}$ is in {\it prenex normal form} (PNF) (we also say that $\phi$ is {\it a $\PNF$ formula} or {\it a $\PNF$ sentence}), if $F(\phi)$ is a path (all quantifiers are in the beginning of the formula). We say that $\hat\phi$ is {\it a $\PNF$ of} $\phi$, if $\hat\phi\in\mathcal{F}$, $\hat\phi$ is in PNF and $\hat\phi\cong\phi$. It is known~\cite{Logic2}, that for any first order formula (which is not necessarily in $\mathcal{F}$) there exists an equivalent first order formula in PNF. This immediately implies that $\phi$ has a PNF.

The formula $\phi$ is in {\it no-equivalence prenex normal form} (NEPNF) (we also say that $\phi$ is {\it $\NE\PNF$ formula} or {\it a $\NE\PNF$ sentence}), if $\phi$ is in PNF, and is constructed as follows. Consider an arbitrary sequence $z=(z_1,\ldots,z_m)$ of symbols from $\{\forall,\exists\}$. Let a formula $\phi_1(x_1,\ldots,x_m)\in\mathcal{F}$ has no quantifiers and no relations $=$ and $\neq$. For each $j\in\{1,\ldots,m-1\}$ a formula $\phi_{j+1}(x_1,\ldots,x_m)$ is obtained from $\phi_j(x_1,\ldots,x_m)$ in the following way:
$$
\phi_{j+1}(x_1,\ldots,x_m)=(x_{j+1}\neq x_j)\wedge\ldots\wedge(x_{j+1}\neq x_1)\wedge(\phi_j(x_1,\ldots,x_m)),\quad
\text{if }z_{j+1}=\exists,
$$
$$
\phi_{j+1}(x_1,\ldots,x_m)=(x_{j+1}=x_j)\vee\ldots\vee(x_{j+1}=x_1)\vee(\phi_j(x_1,\ldots,x_m)),\quad
\text{if }z_{j+1}=\forall.
$$
Finally, $\phi=z_1x_1\ldots z_mx_m\,(\phi_m(x_1,\ldots,x_m))$. We say that $(\phi_1(x_1,\ldots,x_m),z)$ is {\it $\NE$-basis} of $\phi$.

\begin{lemma}
For any \PNF sentence $\phi\in\mathcal{F}$ there exists an $\NE\PNF$ sentence $\hat\phi\in\mathcal{F}$ with the same sequence of quantifiers such that $\phi\cong\hat\phi$.
\label{NEPNF}
\end{lemma}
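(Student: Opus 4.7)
We argue by induction on the number $k$ of quantifiers, strengthening the statement to accommodate free variables: for every $\PNF$ formula $\chi(y_1,\ldots,y_s)=z_1x_1\ldots z_kx_k\,\psi$ in $\mathcal{F}$ we will produce an equivalent formula $\hat\chi$ with the same prefix whose innermost quantifier-free base $\hat\psi_0$ contains no $=$ or $\neq$ involving any bound variable $x_i$, and whose $\NE$-construction at step $j+1$ guards $x_{j+1}$ against both the earlier bound variables $x_1,\ldots,x_j$ and every free variable $y_1,\ldots,y_s$. The lemma is the special case $s=0$; the base case $k=0$ is immediate with $\hat\psi_0:=\psi$.

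For the inductive step write $\chi=z_1x_1\,\chi'(y_1,\ldots,y_s,x_1)$ and apply the induction hypothesis to $\chi'$, treating $x_1$ as an additional free variable. This yields $\hat\chi'(y,x_1)=z_2x_2\ldots z_kx_k\,\hat\psi'_{k-1}$ whose inner $\NE$-constraints already protect each $x_{j+1}$ (for $j\geq 1$) against $x_1,\ldots,x_j,y_1,\ldots,y_s$, precisely as required by the $\NE\PNF$ of $\chi$. It remains to install the outermost $\NE$-constraint for $x_1$ against $y_1,\ldots,y_s$ and to remove any residual $=,\neq$ involving $x_1$ from $\hat\psi'_0$.

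For $z_1=\exists$ we split the outer quantifier according to whether $x_1\in\{y_1,\ldots,y_s\}$, obtaining the asymptotic identity
$$
\exists x_1\,\hat\chi'(y,x_1) \,\cong\, \exists x_1\Bigl[\textstyle\bigwedge_{l=1}^{s}(x_1\neq y_l)\wedge\bigl(\hat\chi'_{\mathrm{dist}}(y,x_1)\vee\textstyle\bigvee_{j=1}^{s}\hat\chi'(y,y_j)\bigr)\Bigr],
$$
valid once the graph has more than $s$ vertices, where $\hat\chi'_{\mathrm{dist}}$ is $\hat\chi'$ with every atomic $(x_1=y_l)$ replaced by $\mathrm{False}$ and every $(x_1\neq y_l)$ by $\mathrm{True}$. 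The case $z_1=\forall$ is dual, using $\bigvee_l(x_1=y_l)$ disjuncts and a conjunction of coincidence branches. Each coincidence branch $\hat\chi'(y,y_j)$ has only $k-1$ quantifiers and $s$ free variables, so the induction hypothesis applies once more to convert it into the required form.

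The principal technical obstacle is that the body inside the outer quantifier becomes a boolean combination of $\PNF$ subformulas sharing the prefix $z_2\ldots z_k$, and such combinations collapse into a single $z_2x_2\ldots z_kx_k$-prefixed formula via the distributivity identities $\exists x(A\vee B)\cong \exists xA\vee\exists xB$ and $\forall x(A\wedge B)\cong \forall xA\wedge\forall xB$ only when the connective agrees with the quantifier type; in the mixed situations (a disjunction of $\forall$-formulas, or a conjunction of $\exists$-formulas) the distribution fails. We circumvent this by observing that each coincidence branch $\hat\chi'(y,y_j)$ is independent of every bound variable $x_1,\ldots,x_k$, so it may be wrapped by any dummy quantifier prefix (asymptotically trivial for graphs with at least $k$ vertices) and the requisite boolean combination can be carried out directly at the quantifier-free level inside $\hat\psi'_0$, appearing as an additional disjunct (when $z_1=\exists$) or conjunct (when $z_1=\forall$) that does not involve any $x_i$; Lemma~\ref{two_PNF} is invoked only when the quantifier types agree. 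After these manipulations $\hat\psi'_0$ is updated to an equivalent $\hat\psi_0$ still free of $=,\neq$ on all bound variables, producing the required $\NE\PNF$ form of $\chi$ with prefix $z_1x_1\ldots z_kx_k$.
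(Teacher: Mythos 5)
Your outer case split (distinguishing whether $x_1$ coincides with one of the free variables, with $\hat\chi'_{\mathrm{dist}}$ and the coincidence branches $\hat\chi'(y,y_j)$) is sound, and you correctly locate the real difficulty: after the split the body is a boolean combination of formulas each carrying the mixed prefix $z_2x_2\ldots z_kx_k$, and such a combination does not collapse to a single formula with that prefix. The proposed circumvention, however, does not close this gap. A coincidence branch $\hat\chi'(y,y_j)$ is independent of $x_1,\ldots,x_k$ only as a \emph{quantified} formula; it still contains $k-1$ quantifiers of its own. There are only two readings of ``carry out the boolean combination at the quantifier-free level inside $\hat\psi'_0$''. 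If you keep the branch as is and insert it as an extra disjunct or conjunct of the base, the base is no longer quantifier-free, the result is not in prenex normal form, and after prenexing the quantifier sequence is strictly longer than $z_1x_1\ldots z_kx_k$ --- so you have not produced an $\NE\PNF$ sentence with the same prefix, which is exactly what the lemma requires. If instead you rename the branch's bound variables to the shared $x_2,\ldots,x_k$ and merge its matrix (which \emph{does} involve these variables) with the matrix of $\hat\chi'_{\mathrm{dist}}$, you are performing precisely the mixed-prefix distribution you yourself declared invalid, and wrapping the branch in dummy quantifiers does not license it. So the crucial recombination step is asserted rather than proved, and as stated it fails.

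Moreover, the failure is not repaired by asymptotic equivalence. Schematically, at the stage where $x_2$ is introduced with $x_1$ free, one meets a disjunction of the form $[\forall x_3\,(x_2\nsim x_3)]\vee[\forall x_3\,(x_1\sim x_3)]$ (a distinct branch and a coincidence branch), which under $\exists x_1\exists x_2$ expresses, up to guard details, ``there is an isolated vertex or a dominating vertex''; the merged form $\forall x_3\,[(x_2\nsim x_3)\vee(x_1\sim x_3)]$ expresses instead ``there are nonadjacent $x_1,x_2$ with $N(x_2)\subseteq N(x_1)$''. On a long path (take $x_2$ an endpoint and $x_1$ at distance two) the second holds while the first fails, and paths have arbitrarily many vertices, so the two are not $\cong$-equivalent; hence per-branch coincidence formulas cannot simply be pushed under the shared prefix. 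Note also that the paper's proof is organized differently precisely so that this recombination never arises: it performs the whole coincidence analysis on the quantifier-free matrix $\varphi$ itself (resolving the equality atoms and substituting $x_i$ for $x_{j+1}$ inside $\varphi$, variable by variable) and only at the end attaches the quantifier prefix with the $\NE$ guards, so at no point does it need to combine separately quantified branches. Your induction, as written, does not go through at this step.
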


{\it Proof.} Let $\phi=z_1x_1\ldots z_mx_m\,(\varphi(x_1,\ldots,x_m))$, where $z_1,\ldots,z_m$ is a sequence of symbols from $\{\forall,\exists\}$. Set $\hat\phi_1(x_1,\ldots,x_m)=\varphi(x_1,\ldots,x_m)$. For each $j\in\{1,\ldots,m-1\}$ a formula $\hat\phi_{j+1}(x_1,\ldots,x_m)$ is obtained from $\hat\phi_j(x_1,\ldots,x_m)$ in the following way.

First, $\hat\phi_{j+1}^0(x_1,\ldots,x_m)$ is obtained from $\hat\phi_j(x_1,\ldots,x_m)$ by assuming that all
$$
x_1=x_{j+1},\ldots,x_j=x_{j+1}
$$
are false, and all
$$
x_1\neq x_{j+1},\ldots,x_j\neq x_{j+1}
$$
are true. For any $i\in\{1,\ldots,j+1\}$, $\hat\phi_j^i(x_1,\ldots,x_m))$ is obtained from $\hat\phi_j(x_1,\ldots,x_m)$ by assuming that all
$$
x_1=x_{j+1},\ldots,x_{i-1}=x_{j+1},x_i\neq x_{j+1},x_{i+1}=x_{j+1},\ldots,x_j=x_{j+1}
$$
are false, and all
$$
x_1\neq x_{j+1},\ldots,x_{i-1}\neq x_{j+1},x_i=x_{j+1},x_{i+1}\neq x_{j+1},\ldots,x_j\neq x_{j+1}
$$ are true.

Second, if $z_{j+1}=\exists$, then
$$
\hat\phi_{j+1}(x_1,\ldots,x_m)=\hat\phi_j^0(x_1,\ldots,x_m)\vee
\left[\bigvee_{i=1}^j(\hat\phi_j^i(x_1,\ldots,x_j,x_i,x_{j+2},\ldots,x_m))\right].
$$
Otherwise,
$$
\hat\phi_{j+1}(x_1,\ldots,x_m)=\hat\phi_j^0(x_1,\ldots,x_m)\wedge
\left[\bigwedge_{i=1}^j(\hat\phi_j^i(x_1,\ldots,x_j,x_i,x_{j+2},\ldots,x_m))\right].
$$

Let $\hat\phi$ be the NEPNF formula with the NE-basis $(\hat\phi_m(x_1,\ldots,x_m),(z_1,\ldots,z_m))$. It is easy to see that $\phi\cong\hat\phi$. Both formulas have the same sequence of quantifiers. $\Box$\\

We will frequently use the following corollary.

\begin{lemma}
Let $\phi=\exists x\,(\varphi(x))\in\mathcal{F}$. Then there exists an $\NE\PNF$ formula $\hat\phi=\exists x\,(\hat\varphi(x))\in\mathcal{F}$ such that $\phi\cong\hat\phi$ and $\ch(\phi)=\ch(\hat\phi)$.
\label{same_changes_PNF}
\end{lemma}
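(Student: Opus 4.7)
The plan is to split the proof into two stages. First, I would convert $\phi$ to an equivalent $\PNF$ formula $\bar\phi = \exists x\, \bar\varphi(x)$ that still begins with $\exists x$ and satisfies $\ch(\bar\phi) = \ch(\phi)$. Then Lemma~\ref{NEPNF} applied to $\bar\phi$ produces an $\NE\PNF$ sentence $\hat\phi$ with the same quantifier sequence, and since $\ch$ of a $\PNF$ formula is determined by its quantifier sequence, this yields $\hat\phi = \exists x\, \hat\varphi(x)$ with $\ch(\hat\phi) = \ch(\bar\phi) = \ch(\phi)$, as required.

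The first stage is the main work. I would prove, by structural induction on $\psi$, the following auxiliary claim: \emph{for any $\psi \in \mathcal{F}$ and any $z \in \{\exists,\forall\}$ there exists a $\PNF$ formula $\bar\psi \in \mathcal{F}$ with $\bar\psi \cong \psi$ and $\ch(zu\, \bar\psi) = \ch(zu\, \psi)$}, where $u$ is a fresh variable. Applying this with $z = \exists$ and $\psi = \varphi$, and taking $\bar\phi = \exists x\, \bar\varphi$, yields the first stage, since $\bar\phi$ is then in $\PNF$ and $\ch(\bar\phi) = \ch(\exists u\, \bar\varphi) = \ch(\exists u\, \varphi) = \ch(\phi)$.

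The base of the induction is immediate for quantifier-free $\psi$. If $\psi = Qw\, \psi'$, apply the hypothesis to $\psi'$ with parameter $Q$ and take $\bar\psi = Qw\, \bar\psi'$; both $\ch(zu\, \psi)$ and $\ch(zu\, \bar\psi)$ decompose as $[z \neq Q] + \ch(Qu\, \psi')$ and $[z \neq Q] + \ch(Qu\, \bar\psi')$ respectively (where $[\cdot]$ is $1$ or $0$ according to whether the condition holds), so they agree by the inductive hypothesis. The delicate case is $\psi = \psi_1 Z \psi_2$ with $Z \in \{\wedge,\vee\}$: here $F(\psi) = F(\psi_1)\sqcup F(\psi_2)$, so $\ch(zu\, \psi) = \max(\ch(zu\, \psi_1), \ch(zu\, \psi_2))$, the hypothesis supplies $\PNF$s $\bar\psi_1, \bar\psi_2$ attaining this maximum, and one must merge their quantifier prefixes into a single $\PNF$ prefix via repeated applications of Lemma~\ref{two_PNF} without ever exceeding that maximum.

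The main obstacle is precisely this combinatorial merging step, since a naive interleaving easily introduces an extra alternation (for example, when $\bar\psi_1 = \exists y\,\forall w\,\chi_1$ and $\bar\psi_2 = \forall v\,\exists t\,\chi_2$ under $z = \exists$, some interleavings give three alternations after prepending $\exists u$ while two suffice). I would view each prefix as a sequence of maximal constant-type blocks $Q_1^{a_1}Q_2^{a_2}\cdots$ and $R_1^{b_1}R_2^{b_2}\cdots$, and greedily apply Lemma~\ref{two_PNF} to pull out a joint block whenever the current fronts of $\bar\psi_1$ and $\bar\psi_2$ share the same quantifier type; when they differ, unload first the front whose type agrees with $z$. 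A short case analysis on the four patterns of $(Q_1,R_1)$ relative to $z$ verifies that this greedy rule always produces a $\PNF$ $\bar\psi$ with $\ch(zu\, \bar\psi) = \max(\ch(zu\, \bar\psi_1), \ch(zu\, \bar\psi_2))$, completing the induction.
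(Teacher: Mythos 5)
Your proposal is correct and follows essentially the same route as the paper: reduce to producing an equivalent PNF formula with the same leading quantifier and the same number of alternations (then invoke Lemma~\ref{NEPNF}, which preserves the quantifier sequence), the PNF being built by induction with Lemma~\ref{two_PNF} used to pull out leading quantifier blocks, always extracting first the block whose type agrees with the quantifier already standing in front. The only difference is bookkeeping: you run a structural induction with a context quantifier and merge two prefixes greedily, whereas the paper inducts on the measure $\mu[F(\phi)]$ (depth of the first branching of the nesting tree) and extracts one block from every branch simultaneously --- the underlying block-ordering idea is identical.
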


{\it Proof.} Let $F$ be a nesting forest of a formula with the quantifier depth $q$. Moreover, let $F$ be a rooted tree with a root $t(F)$. Denote by $t^r_1(F),\ldots,t^r_{a(r,F)}(F)$ all the vertices of $F$ which are at the distance $r-1$ from $t(F)$, where $r\in\{1,\ldots,q\}$, $a(r,F)\in\{1,2,\ldots\}$. Obviously, $a(1,F)=1,$ $a(r,F)\geq 1$ for all $r\in\{2,\ldots,q\}$. Let $r$ be the first positive integer such that $a(r,F)>1$ (if there is no such $r$, then set $r=q+1$). Let
$$
\mu[F]=q+1-r.
$$
Note that if $F$ is a simple path with an end-point $t(F)$, then $\mu[F]=0$.\\

Consider a sentence $\phi=\exists x\,(\varphi(x))\in\mathcal{F}$ such that $\ch(\phi)=k$. By Lemma~\ref{NEPNF}, it is sufficient to prove that there exists a PNF sentence $\hat\phi=\exists x\,(\hat\varphi(x))\in\mathcal{F}$ such that $\phi\cong\hat\phi$ and $\ch(\hat\phi)=k$. If $\mu[F(\phi)]=0$, then we are done ($\hat\phi=\phi$). Suppose that $\mu[F(\phi)]=m\in\mathbb{N}$, and that for any formula $\zeta$ (not necessarily closed and with an arbitrary first quantifier) with $\mu[F(\zeta)]<m$ the existence of an equivalent PNF sentence with the same number of quantifier alternations and the same first quantifier is already proven.

Let $\phi=z_1 x_1 \ldots z_s x_s\,(\varphi(x_1,\ldots,x_s))$, where $s=q-\mu[F(\phi)]$, $z_1=\exists$, $z_2,\ldots,z_s\in\{\forall,\exists\}$, and the first symbol of $\varphi(x_1,\ldots,x_s)$ is not a quantifier. The formula $\varphi(x_1,\ldots,x_s)$ is a logical combination $L$ (disjunctions and conjunctions) of formulas
$$
\exists x\,(\hat\varphi_i(x_1,\ldots,x_s,x)),\quad\forall x\,(\hat\varphi^j(x_1,\ldots,x_s,x)).
$$
Let $I=\{1,\ldots,|I|\}$ be the set of all such $i$s and $J=\{1,\ldots,|J|\}$ be the set of all such $j$s. So,
$$
\varphi(x_1,\ldots,x_s)=L(\exists x\,(\hat\varphi_i(x_1,\ldots,x_s,x)), i\in I;\,\, \forall x\,(\hat\varphi^j(x_1,\ldots,x_s,x)), j\in J).
$$
Obviously, for all $i\in I,$ $j\in J$,
$$
\mu[F(\hat\varphi_i(x_1,\ldots,x_s,x))]<m,\quad\mu[F(\hat\varphi^j(x_1,\ldots,x_s,x))]<m.
$$
By the induction hypothesis, for all $i\in I,$ $j\in J$ there exist PNF formulas
$$
\exists x\,(\tilde\varphi_i(x_1,\ldots,x_s,x))\cong\exists x\,(\hat\varphi_i(x_1,\ldots,x_s,x)),
$$
$$
\forall x\,(\tilde\varphi^j(x_1,\ldots,x_s,x))\cong\forall x\,(\hat\varphi^j(x_1,\ldots,x_s,x)),
$$
such that
$$
\ch(\exists x\,(\tilde\varphi_i(x_1,\ldots,x_s,x)))=\ch(\exists x\,(\hat\varphi_i(x_1,\ldots,x_s,x))),
$$
$$
\ch(\forall x\,(\tilde\varphi^j(x_1,\ldots,x_s,x)))=\ch(\forall x\,(\hat\varphi^j(x_1,\ldots,x_s,x))).
$$
Let
$$
 \tilde\psi(x_1,\ldots,x_s)=
 L(\exists x\,(\tilde\varphi_i(x_1,\ldots,x_s,x)), i\in I; \,\,
 \forall x\,(\tilde\varphi^j(x_1,\ldots,x_s,x)), j\in J).
$$
Then the formulas $\phi$ and
$$
\psi=z_1 x_1\ldots z_s x_s \,(\tilde\psi(x_1,\ldots,x_s))
$$
are equivalent and have the same numbers of quantifier alternations. Moreover, $F(\psi)$ is a rooted tree with exactly one vertex with a degree greater than $2$. The distance between this vertex and the root $t(F(\psi))$ is $s-1$. Let the distance between this vertex and a vertex with the biggest distance from the root equal $r$. Let us construct a formula $\psi^0\cong\psi$ such that $\ch(\psi^0)=\ch(\psi)$, $F(\psi^0)$ is a rooted tree with at most one vertex with a degree greater than $2$, and the distance between this vertex (if it exists) and a vertex with the biggest distance from the root is less than $r$. Obviously, we get the target formula $\hat\phi$ after applying such a construction at most $r$ times.

For all $i\in I,$ $j\in J$ let us find positive integers $d_i,d^j$ such that
$$
\exists x^1\,(\tilde\varphi_i(x_1,\ldots,x_s,x^1))=\exists x^1\ldots\exists x^{d_i}\,(\tilde\psi_i(x_1,\ldots,x_s,x^1,\ldots,x^{d_i})),
$$
$$
\forall x^1\,(\tilde\varphi^j(x_1,\ldots,x_s,x^1))=\forall x^1\ldots\forall x^{d^j}\,(\tilde\psi^j(x_1,\ldots,x_s,x^1,\ldots,x^{d^j})),
$$
where the formulas $\tilde\psi_i(x_1,\ldots,x_s,x^1,\ldots,x^{d_i})$, $\tilde\psi^j(x_1,\ldots,x_s,x^1,\ldots,x^{d^j})$ either have no quantifiers, or $\forall,\exists$ are the quantifier symbols they begin from respectively. Set $D_I=\sum_{i\in I}d_i$, $D_J=\sum_{j\in J}d^j$. Without loss of generality, assume $z_s=\exists$.

By Lemma~\ref{two_PNF}, there exists a formula (if $z_s=\forall$, then this formula starts with $\forall$)
$$
\tilde\psi^0(x_1,\ldots,x_s)=
$$
$$
\exists x_{s+1}\ldots \exists x_{s+D_I}\forall x_{s+D_I+1}\ldots\forall x_{s+D_I+D_J}\,(\hat\psi(x_1,\ldots,x_{s+D_I+D_J}))\cong
\tilde\psi(x_1,\ldots,x_s)
$$
such that
$$
\ch(z_1x_1\ldots z_sx_s\,(\tilde\psi^0(x_1,\ldots,x_s)))=\ch(z_1x_1\ldots z_sx_s\,(\tilde\psi(x_1,\ldots,x_s))).
$$
Moreover, $F(z_1x_1\ldots z_sx_s\,(\tilde\psi^0(x_1,\ldots,x_s)))$ is a tree with exactly one vertex with a degree greater than $2$, and the distance between this vertex and a vertex with the biggest distance from the root is less than $r$. Finally, set
$$
\psi^0=z_1x_1\ldots z_sx_s\,(\tilde\psi^0(x_1,\ldots,x_s)).\quad \Box
$$

\subsection{Ehrenfeucht games}


We consider three modification of Ehrenfeucht game.

\begin{enumerate}

\item The game $\EHR(G,H,q)$ is played on graphs $G$ and $H$. There are two players (Spoiler and Duplicator) and a fixed number of rounds $q$. At the $\nu\mbox{-}$th round ($1 \leq \nu \leq q$), Spoiler chooses either a vertex $x_{\nu}$ of $G$ or a vertex $y_{\nu}$ of $H$ (which does not coincide with any of chosen vertices). Duplicator chooses a vertex of the other graph (which does not coincide with any of chosen vertices as well). At the end of the game, the distinct vertices $x_{1},...,x_{q}$ of $G$, $y_{1},...,y_{q}$ of $H$ are chosen. Duplicator wins if and only if the map $f(x_i)=y_i$, $i\in\{1,\ldots,q\}$, is an isomorphism of $G|_{\{x_1,\ldots,x_q\}}$ and $H|_{\{y_1,\ldots,y_q\}}$.

\item In the game $\EHR(G,H,q,\leq k)$, there are $q$ rounds as well. The only difference with the game $\EHR(G,H,q)$ is that Spoiler can alternate at most $k$ times (if in the $i$-th round Spoiler chooses a vertex, say, in $G$, and in the $i+1$-th round --- in $H$ (or vice versa), then we say that he {\it alternates}).

\item The most strict rules (for Spoiler) are in the game $\EHR(G,H,q,k)$. The only difference with the game $\EHR(G,H, q,\leq k)$ is that Spoiler must alternates exactly $k$ times.

\end{enumerate}

Our results on first order properties of random graphs are based on the following typical arguments on the connection between an elementary equivalence and Ehrenfeucht game.

\begin{lemma}
The following two properties are equivalent:
\begin{itemize}
\item[1)] Spoiler has a winning strategy in $\EHR(G,H,q)$;
\item[2)] there is $\phi\in\mathcal{F}$ with $\q(\phi)=q$ such that $G\models\phi$, $H\models\neg(\phi)$.
\end{itemize}
\label{Ehren_1}
\end{lemma}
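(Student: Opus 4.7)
The plan is to prove the following strengthening by induction on $q$: for any graphs $G, H$ and any tuples $\bar a = (a_1, \ldots, a_\ell) \in V(G)^\ell$, $\bar b = (b_1, \ldots, b_\ell) \in V(H)^\ell$ already pebbled on the board, Spoiler has a winning strategy in the remaining $q$-round Ehrenfeucht game if and only if there is a formula $\phi(x_1, \ldots, x_\ell) \in \mathcal{F}$ with $\q(\phi) = q$ such that $G \models \phi(\bar a)$ and $H \models \neg \phi(\bar b)$. The original lemma is the case $\ell = 0$. This strengthening is forced because each round consumes one quantifier and introduces one pebble, so an induction on $q$ must also track the pebbles. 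Requiring $\q(\phi) = q$ rather than $\leq q$ is harmless, since any formula may be padded to the required depth by a dummy quantifier.

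In the base case $q = 0$, Duplicator has already lost precisely when $a_i \mapsto b_i$ fails to be a partial isomorphism, and in that case one of the finitely many atomic formulas in $\mathcal{F}$ among $x_i \sim x_j$, $x_i = x_j$, $x_i \nsim x_j$, $x_i \neq x_j$ distinguishes the two tuples. For the direction $(\Leftarrow)$ of the inductive step, I recurse on the syntactic structure of a distinguishing $\phi$ of depth $q$: for a conjunction or disjunction, pick the component that still distinguishes in the same direction; for $\phi = \exists x\, \psi(\bar x, x)$ with $G \models \phi(\bar a)$ and $H \not\models \phi(\bar b)$, Spoiler plays a witness $a \in V(G)$, and no matter which $b \in V(H)$ Duplicator answers with, $\psi$ of depth $q-1$ still distinguishes $(\bar a, a)$ from $(\bar b, b)$, so the inductive hypothesis supplies a winning continuation; the case $\phi = \forall x\, \psi$ is symmetric, with Spoiler playing in $H$.

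For the converse $(\Rightarrow)$, suppose Spoiler has a winning strategy, and examine his first move; without loss of generality he selects $a \in V(G)$. For every possible reply $b \in V(H)$, Spoiler wins the remaining $q-1$ rounds from $(\bar a, a;\, \bar b, b)$, so by the inductive hypothesis there is $\psi_b(\bar x, x) \in \mathcal{F}$ of depth $q-1$ with $G \models \psi_b(\bar a, a)$ and $H \models \neg \psi_b(\bar b, b)$. The main obstacle is that $V(H)$ may be infinite, so $\bigwedge_{b \in V(H)} \psi_b$ need not be a formula; the standard fix is to establish, by a side induction on depth and on the number of free variables, that $\mathcal{F}$ contains only finitely many inequivalent formulas of each bounded depth and variable count (starting from the finitely many atomic formulas in $\ell+1$ variables). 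This reduces the indexing to a finite set of equivalence classes, and a conjunction over class representatives yields a single $\psi(\bar x, x)$ of depth $q-1$ with $G \models \psi(\bar a, a)$ and $H \models \neg \psi(\bar b, b)$ for every $b$. Then $\phi(\bar x) = \exists x\, \psi(\bar x, x)$ has depth $q$, is witnessed in $G$ by $a$, and has no witness in $H$, so $G \models \phi(\bar a)$ while $H \models \neg \phi(\bar b)$. A Spoiler opening in $H$ produces a leading $\forall$ by a symmetric argument, closing the induction.
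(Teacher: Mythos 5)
Your argument is the classical proof of Ehrenfeucht's theorem, which is precisely what this lemma is: the paper gives no proof of it, citing Ehrenfeucht's original result instead, and the technique you use (induction on the number of remaining rounds over pebbled positions, with the finiteness of $\cong$-classes of formulas of bounded quantifier depth and bounded number of free variables used to turn the conjunction over all Duplicator replies into a finite one) is exactly the technique the paper deploys for its Lemmas~\ref{Ehren_2} and~\ref{Ehren_3}. So methodologically you are fully aligned with the paper, the only difference being that you prove what the paper merely cites.

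One point does need patching, because the game $\EHR(G,H,q)$ as defined here forbids choosing a previously chosen vertex. In your direction from a winning strategy to a formula, the inductive hypothesis supplies $\psi_b$ only for Duplicator's \emph{legal} replies $b\notin\{b_1,\dots,b_\ell\}$, whereas the quantifier in $\phi=\exists x\,\psi$ ranges over all of $V(H)$, so $\phi$ may be witnessed in $H$ by an already pebbled vertex. Concretely, take $\ell=1$, one remaining round, $G$ two isolated vertices $a_1,a$, and $H$ a single edge $\{b_1,c\}$: Spoiler wins from $(a_1;b_1)$, and the legitimate choice $\psi_c=(x\nsim x_1)$ yields $\phi=\exists x\,(x\nsim x_1)$, which is true of $b_1$ in $H$ (witness $x=b_1$), hence fails to distinguish. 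The repair is to conjoin the guards $(x\neq x_1)\wedge\dots\wedge(x\neq x_\ell)$ to $\psi$, which stays in $\mathcal{F}$ and does not change the depth; this is what the paper's NEPNF device encodes. Symmetrically, in your direction from a formula to a strategy, the witness of $\exists x\,\psi$ may coincide with a pebbled vertex, which Spoiler cannot legally play; there one substitutes the corresponding pebbled variable into $\psi$, obtaining a distinguishing formula of depth $q-1$ at the current position, and uses the easy observation that winning the shorter game suffices (the paper's proofs of Lemmas~\ref{Ehren_2} and~\ref{Ehren_3} handle this by letting Spoiler ``skip'' such rounds). With these two routine adjustments your proof is correct.
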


This statement is a particular case of Ehrenfeucht theorem~\cite{Ehren}.

The next two lemmas have typical proofs. We give it here for the sake of convenience.

\begin{lemma}
The following two properties are equivalent:
\begin{itemize}
\item[1)] Spoiler has a winning strategy in $\EHR(G,H,q,\leq k)$;
\item[2)] there is $\phi\in\mathcal{F}$ with $\q(\phi)=q$ and $\ch(\phi)\leq k$ such that $G\models\phi$, $H\models\neg(\phi)$.
\end{itemize}
\label{Ehren_2}
\end{lemma}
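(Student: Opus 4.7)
The plan is to induct on $q$, proving both implications simultaneously in a form generalized to allow $s \geq 0$ distinct pre-pebbled vertices $\bar a = (a_1,\ldots,a_s) \in V(G)^s$ and $\bar b = (b_1,\ldots,b_s) \in V(H)^s$ (so $\phi$ may have $s$ free variables), together with a parameter $Z \in \{\exists, \forall, \ast\}$ recording the type of Spoiler's immediately preceding move ($\exists$ if he just played in $G$, $\forall$ if in $H$, $\ast$ if the game has not yet begun). For $\phi \in \mathcal{F}$ I introduce an auxiliary quantity $\ch_Z(\phi)$, defined as $\ch(\phi)$ but with a virtual label $Z$ prepended to every path in $F(\phi)$ starting at a root; thus $\ch_\ast(\phi) = \ch(\phi)$, while $\ch_\exists(\phi)$ or $\ch_\forall(\phi)$ may exceed $\ch(\phi)$ by $1$. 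The strengthened claim is that Spoiler wins the remaining $q$-round game from this position, with at most $k$ future alternations, iff there is $\phi(v_1,\ldots,v_s) \in \mathcal{F}$ with $\q(\phi) = q$ and $\ch_Z(\phi) \leq k$ that distinguishes $(G, \bar a)$ from $(H, \bar b)$. Lemma~\ref{Ehren_2} is the instance $s = 0$, $Z = \ast$.

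The base case $q = 0$ is immediate: Duplicator wins iff $a_i \mapsto b_i$ is a partial isomorphism, and otherwise a distinguishing literal built from $\sim, \nsim, =, \neq$ gives $\phi$ with $\ch_Z = 0$. For the inductive step, direction $(1) \Rightarrow (2)$, consider Spoiler's first move. Using the symmetry $\phi \leftrightarrow \neg\phi$ (which swaps $G, H$ and $\exists, \forall$ while preserving $\q$ and $\ch_Z$), assume WLOG he plays $a_{s+1} \in V(G) \setminus \bar a$. Then for every legal Duplicator response $b_{s+1} \in V(H) \setminus \bar b$ Spoiler wins the $(q-1)$-round continuation from the extended position with $Z' = \exists$ and at most $k' = k - \delta$ future alternations, where $\delta = 1$ if $Z = \forall$ and $\delta = 0$ otherwise. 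By the induction hypothesis, for each $b_{s+1}$ there is $\psi_{b_{s+1}}(\bar v, v_{s+1}) \in \mathcal{F}$ with $\q = q-1$ and $\ch_\exists(\psi_{b_{s+1}}) \leq k'$ distinguishing the extended pointed graphs. Setting
$$
\phi(\bar v) = \exists v_{s+1} \Bigl( \bigwedge_{i=1}^{s} (v_{s+1} \neq v_i) \wedge \bigwedge_{b_{s+1} \in V(H) \setminus \bar b} \psi_{b_{s+1}}(\bar v, v_{s+1}) \Bigr),
$$
one verifies that $G \models \phi(\bar a)$ (witness $a_{s+1}$), $H \not\models \phi(\bar b)$ (for any candidate $v_{s+1}$ at least one conjunct fails), $\q(\phi) = q$, and $\ch_Z(\phi) = \delta + \max_{b_{s+1}} \ch_\exists(\psi_{b_{s+1}}) \leq k$ via the identity $\ch_Z(\exists x\,\psi) = \delta + \ch_\exists(\psi)$ read off from the definition.

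For direction $(2) \Rightarrow (1)$, I first apply Lemmas~\ref{NEPNF} and~\ref{same_changes_PNF} (together with their $\forall$-analogs, proved in the same way) to normalize $\phi$ to an equivalent $\NE\PNF$ formula with the same $\q$ and $\ch$, so that distinctness clauses $v_{j+1} \neq v_i$ for $i \leq j$ are syntactically built in and every existentially quantified witness is automatically fresh with respect to earlier pebbles. The analysis then proceeds by the outermost operation of $\phi$. If $\phi = \varphi_1 \wedge \varphi_2$ or $\varphi_1 \vee \varphi_2$, one of the $\varphi_i$ already distinguishes $(G, \bar a)$ from $(H, \bar b)$ and satisfies $\ch_Z(\varphi_i) \leq \ch_Z(\phi)$ (since $F(\varphi_i)$ is a subforest of $F(\phi)$), so we recurse on $\varphi_i$. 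If $\phi = \exists x\,\psi(\bar v, x)$, pick $a_{s+1}$ with $G \models \psi(\bar a, a_{s+1})$ (fresh thanks to the NEPNF form) and let Spoiler play it in $G$; for any response $b_{s+1}$ we have $H \not\models \psi(\bar b, b_{s+1})$, so the IH with $Z' = \exists$ and budget $k - \delta$ applies, using $\ch_\exists(\psi) = \ch_Z(\phi) - \delta$ to close the bound. The case $\phi = \forall x\,\psi$ is symmetric.

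The main obstacle is the alternation bookkeeping carried by the parameter $Z$: one must verify rigorously, from the definition of $F$, the identities $\ch_Z(\exists x\,\psi) = \ch_\exists(\psi)$ when $Z \in \{\exists, \ast\}$ and $\ch_\forall(\exists x\,\psi) = \ch_\exists(\psi) + 1$ (and their $\forall$-duals), and confirm that they mirror the alternation cost of a round in the game exactly. A secondary subtlety is that a generic formula in $\mathcal{F}$ does not enforce distinctness of bound variables while the game always does; this mismatch is absorbed by the NEPNF normalization in the reverse direction and by the explicit $v_{s+1} \neq v_i$ conjuncts added in the forward direction.
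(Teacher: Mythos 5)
Your direction (1)$\Rightarrow$(2) is essentially the paper's argument: build the distinguishing formula round by round, prepending $\exists$ (with a conjunction over Duplicator's possible replies) or $\forall$ (with a disjunction), and track alternations through the quantifier prefix. The only deviation is that you take the conjunction over \emph{all} vertices $b_{s+1}\in V(H)\setminus\bar b$, which is legitimate only because the graphs here are finite; the paper instead uses the finiteness of $\cong$-classes of formulas of bounded depth to cut the conjunction down to a bounded set $\mathcal{U}$, which works uniformly. This is a cosmetic difference.

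The reverse direction (2)$\Rightarrow$(1) has a genuine gap. You begin by invoking Lemmas~\ref{NEPNF} and~\ref{same_changes_PNF} to replace $\phi$ by an equivalent $\NE\PNF$ formula ``with the same $\q$ and $\ch$''. These lemmas preserve $\ch$ but \emph{not} $\q$: an $\NE\PNF$ formula is in particular in PNF, and prenexing a formula whose nesting forest branches merges sibling quantifier blocks into one path, which in general strictly increases the quantifier depth (the paper's own example is the depth-$5$ sentence of Theorem~\ref{4_or_5}, whose PNF~(\ref{PNF_example}) has depth $8$; indeed, if depth were preserved, Question~3 of Section~\ref{laws} would be vacuous). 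Since the number of rounds in the game you extract from the normalized formula equals its quantifier depth, your recursion only yields a Spoiler win in $\EHR(G,H,q',\leq k)$ for some $q'\geq q$, and a win with more rounds does not give a win with $q$ rounds (fewer rounds is strictly harder for Spoiler). So the step ``normalize, then read off the strategy'' does not prove 2)$\Rightarrow$1) as stated. The freshness problem that motivated the normalization must instead be handled inside the game, as the paper does: when the $\exists$-witness (or $\forall$-counterexample) coincides with an already pebbled vertex, Spoiler ``skips'' the round, reusing the old pebble in the bookkeeping; at the end at most $q$ actual moves have been made, the map on the distinct chosen vertices already fails to be a partial isomorphism, and any leftover rounds can be filled with moves in the same graph as the last move, so neither the round bound $q$ nor the alternation bound $k$ is violated. (A smaller instance of the same omission: in your $\wedge/\vee$ case the chosen conjunct may have depth $<q$, so again surplus rounds must be filled without extra alternations; this needs at least a sentence.)
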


{\it Proof.} First, let us prove that 2) implies 1). Let $\phi\in\mathcal{F}$ be a sentence such that $\ch(\phi)\leq k$, $\q(\phi)=q$, $G\models\phi$ and $H\models\neg(\phi)$. We will describe a winning strategy of Spoiler by an induction on the number of played rounds. The sentence $\phi$ is a logical combination (disjunctions and conjunctions) of sentences $\varphi_i=\exists x\,(\hat\varphi_i(x))$ and $\varphi^j=\forall x\,(\hat\varphi^j(x))$. Obviously, one of these sentences is true for $G$ and not true for $H$. Let $\beta_1$ be the root of the nesting forest (tree) of this sentence. If, say,
$$
G\models\exists x\,(\hat\varphi_1(x)),\quad H\models\neg(\exists x\,(\hat\varphi_1(x))),
$$
then set $\varphi_1(x):=\hat\varphi_1(x)$. Spoiler in the {\bf first round} chooses a vertex $v_1$ such that $G\models\varphi_1(v_1)$. Duplicator chooses a vertex $u_1$. Obviously, $H\models\neg(\varphi_1(u_1))$. Denote the root of $F(\phi)$ by $\beta_1$. If, say,
$$
G\models\forall x\,( \hat\varphi^1(x)),\quad H\models\neg(\forall x\,(\hat\varphi^1(x))),
$$
then set $\varphi_1(x):=\hat\varphi^1(x)$. Spoiler in the first round chooses a vertex $u_1$ such that $H\models\neg(\varphi_1(u_1))$. Duplicator chooses a vertex $v_1$. Obviously, $G\models\varphi_1(v_1)$.

Fix $m\in\{2,\ldots,k\},\ell=\ell(m-1)\in\{1,\ldots,m-1\}$ and vertices $v_1,\ldots,v_{m-1}$, $u_1,\ldots,u_{m-1}$ (not necessarily distinct) in the graphs $G,H$ respectively. Suppose that $v_{i_1},\ldots,v_{i_{\ell}}$, $u_{i_1},\ldots,u_{i_{\ell}}$ are all distinct vertices of $v_1,\ldots,v_{m-1}$, $u_1,\ldots,u_{m-1}$ respectively. Moreover, $v_j=v_{i_r}$ if and only if $u_j=u_{i_r}$. Suppose that {\bf $\ell$ rounds are played}, and the vertices $v_{i_1},\ldots,v_{i_{\ell}}$, $u_{i_1},\ldots,u_{i_{\ell}}$ are chosen in the graphs $G,H$ respectively. Moreover, suppose that in $\phi$ there exists a nested subformula $\varphi_{m-1}(x_1,\ldots,x_{m-1})$ such that $\q(\varphi_{m-1}(x_1,\ldots,x_{m-1}))=q-m+1$,
$$
 G\models\varphi_{m-1}(v_1,\ldots,v_{m-1}),\quad H\models\neg(\varphi_{m-1}(u_1,\ldots,u_{m-1})).
$$
The formula $\varphi_{m-1}(x_1,\ldots,x_{m-1})$ is a logical combination (disjunctions and conjunctions) of formulas
$$
\exists x_m\,(\hat\varphi_i(x_1,\ldots,x_{m})),\quad \forall x_m\,(\hat\varphi^j(x_1,\ldots,x_{m})).
$$
Obviously, (at least) one of these formulas is true for $G$ on $v_1,\ldots,v_{m-1}$ and not true for $H$ on $u_1,\ldots,u_{m-1}$. Let $\beta_m$ be the root of the nesting forest of such a formula. If, say,
$$
G\models\exists x_m\,(\hat\varphi_1(v_1,\ldots,v_{m-1},x_m)),\quad H\models\neg(\exists x_m\, (\hat\varphi_1(v_1,\ldots,v_{m-1},x_m))),
$$
then we find a vertex $v_m$ such that $G\models\hat\varphi_1(v_1,\ldots,v_{m-1},v_m)$ and set $\varphi_m(x_1,\ldots,x_m)=\hat\varphi_1(x_1,\ldots,x_m)$. If $v_m\in\{v_1,\ldots,v_{m-1}\}$, then Spoiler ``skips'' this round, and we set $u_m=u_j$, where $j\in\{1,\ldots,m-1\}$ is a number such that $v_j=v_m$. Otherwise, Spoiler chooses a vertex $v_{i_{\ell+1}}=:v_m$ and Duplicator chooses a vertex $u_{i_{\ell+1}}=:u_m$. Obviously, in both cases, $H\models\neg(\varphi_m(u_1,\ldots,u_m))$. If, say,
$$
G\models\forall x_m\,(\hat\varphi^1(v_1,\ldots,v_{m-1},x_m)),\quad H\models\neg(\forall x_m\, (\hat\varphi^1(v_1,\ldots,v_{m-1},x_m))),
$$
then fix a vertex $u_m$ such that $H\models\neg(\hat\varphi^1(u_1,\ldots,u_{m-1},u_m))$ and set $\varphi_m(x_1,\ldots,x_m)=\hat\varphi^1(x_1,\ldots,x_m)$. If $u_m\in\{u_1,\ldots,u_{m-1}\}$, then Spoiler ``skips'' this round, and we set $v_m=v_j$, where $j\in\{1,\ldots,m-1\}$ is a number such that $u_j=u_m$. Otherwise, Spoiler chooses a vertex $u_{i_{\ell+1}}=:u_m$ and Duplicator chooses a vertex $v_{i_{\ell+1}}=:v_m$. Obviously, in both cases, $G\models\varphi_m(v_1,\ldots,v_m)$.

This strategy is winning for Spoiler in $\EHR(G,H,q)$. Moreover, it is easy to see that Spoiler alternates $\tilde k$ times, where $\tilde k\leq k$ is the number of labels alternations in the path $\beta_1\beta_{\ell(2)}\ldots\beta_{\ell(q)}$.\\

It remains to prove that 1) implies 2). Let Spoiler have a winning strategy in the game $\EHR(G,H,k,q)$ with a first move in $G$. Let us construct a sentence $\phi\in\mathcal{F}$ such that $\ch(\phi)=k$, $\q(\phi)=q$, $G\models\phi$ and $H\models\neg(\phi)$.

Let, after $q$ rounds, distinct vertices $v_1,\ldots,v_q$ in $G$ and $u_1,\ldots,u_q$ in $H$ be chosen. As Spoiler wins in $q$ rounds, there is a formula $\varphi_q(x_1,\ldots,x_q)\in\mathcal{F}$ such that $\q(\varphi_q(x_1,\ldots,x_q))=0$ and $G\models\varphi_q(v_1,\ldots,v_q)$, $H\models\neg(\varphi_q(u_1,\ldots,u_q))$.

Fix $m\in\{0,\ldots,q-1\}$. Let after $m$ rounds, distinct vertices $v_1,\ldots,v_m$ in $G$ and $u_1,\ldots,u_m$ in $H$ be chosen. In the $m+1$-th round, Spoiler chooses, say, a vertex $v_{m+1}\in V(G)$ (according to his winning strategy). Suppose that, for any choice of Duplicator (denote it by $u_{m+1}$), there is a formula $\varphi_{m+1}^{u_{m+1}}(x_1,\ldots,x_{m+1})\in\mathcal{F}$ such that $\q(\varphi_{m+1}^{u_{m+1}}(x_1,\ldots,x_{m+1}))=q-m-1$ and
$$
G\models\varphi_{m+1}^{u_{m+1}}(v_1,\ldots,v_{m+1}),\quad H\models\neg(\varphi_{m+1}^{u_{m+1}}(u_1,\ldots,u_{m+1})).
$$
Note that, for a fixed number of free variables, there is only a finite number of representatives of $\cong$-equivalence classes of formulas in $\mathcal{F}$ with a fixed quantifier depth (see, e.g.,~\cite{Logic2}). Therefore, there are a positive constant $C$ (which does not depend on $|V(G)|$, $|V(H)|$) and a set $\mathcal{U}\subset V(H)$ with $|\mathcal{U}|\leq C$ such that the following property holds. For any $u_{m+1}\in V(H)$, there exists $u\in\mathcal{U}$ such that $\varphi_{m+1}^u(x_1,\ldots,x_{m+1})\cong\varphi^{u_{m+1}}_{m+1}(x_1,\ldots,x_{m+1})$. Set
$$
\varphi_m(x_1,\ldots,x_m)=\exists x_{m+1}\,\left(\bigwedge_{u\in\mathcal{U}}(\varphi^u_{m+1}(x_1,\ldots,x_{m+1}))\right).
$$
Obviously, $G\models\varphi_m(v_1,\ldots,v_m)$ and $H\models\neg(\varphi_m(u_1,\ldots,u_m))$.

Finally, let Spoiler choose a vertex $u_{m+1}\in V(H)$ and, for any choice of Duplicator $v_{m+1}\in V(G)$,  there exists a formula $\varphi_{m+1}^{v_{m+1}}(x_1,\ldots,x_{m+1})\in\mathcal{F}$ such that $\q(\varphi_{m+1}^{v_{m+1}}(x_1,\ldots,x_{m+1}))=q-m-1$ and
$$
G\models\varphi_{m+1}^{v_{m+1}}(v_1,\ldots,v_{m+1}),\quad H\models\neg(\varphi_{m+1}^{v_{m+1}}(u_1,\ldots,u_{m+1})).
$$
As in the previous case, there are a positive constant $C$ (which does not depend on $|V(G)|$, $|V(H)|$) and a set $\mathcal{V}\subset V(H)$ with $|\mathcal{V}|\leq C$ such that the following property holds. For any $v_{m+1}\in V(G)$, there exists $v\in\mathcal{V}$ such that $\varphi_{m+1}^v(x_1,\ldots,x_{m+1})\cong\varphi^{v_{m+1}}_{m+1}(x_1,\ldots,x_{m+1})$. Set
$$
\varphi_m(x_1,\ldots,x_m)=\forall x_{m+1}\,\left(\bigvee_{v\in\mathcal{V}}(\varphi^v_{m+1}(x_1,\ldots,x_{m+1}))\right).
$$
By the induction, we get that $\phi=\phi_0$ is the required sentence which is true for $G$ and false for $H$. Obviously, $\ch(\phi)\leq k$. $\Box$.



\begin{lemma}
The following two properties are equivalent:
\begin{itemize}
\item[1)] Spoiler has a winning strategy in $\EHR(G,H,q,k)$;
\item[2)] there is $\phi\in\mathcal{F}$ with $\q(\phi)=q$ such that a number of labels alternations in any path of $F(\phi)$ on $q$ vertices starting in a root equals $k$, and $G\models\phi$, $H\models\neg(\phi)$.
\end{itemize}
\label{Ehren_3}
\end{lemma}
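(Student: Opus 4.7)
The plan is to mirror the argument of Lemma~\ref{Ehren_2}, replacing the bound ``$\leq k$'' by the equality ``$=k$'' throughout, and to exploit one structural property of the formula built from a game strategy: at each depth the quantifier label is determined by Spoiler's round-choice of graph alone, uniformly across all branches.

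For the direction 2) $\Rightarrow$ 1), I would proceed exactly as in the proof of Lemma~\ref{Ehren_2}. Given a sentence $\phi\in\mathcal{F}$ with $\q(\phi)=q$ in which every path of $F(\phi)$ on $q$ vertices starting in a root has exactly $k$ label alternations, Spoiler builds his strategy round by round, maintaining a nested subformula $\varphi_m(x_1,\ldots,x_m)$ true on the $G$-side tuple and false on the $H$-side tuple. At round $m+1$, the outermost quantifier of $\varphi_m$ dictates whether he plays in $G$ (for $\exists$) or in $H$ (for $\forall$), which corresponds to descending one edge along a particular branch of $F(\phi)$. After $q$ rounds Spoiler has traversed a root-to-leaf path of length $q$, and the number of times he switches graphs equals the number of label alternations along that path, which by hypothesis is exactly $k$. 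This gives a Spoiler win in $\EHR(G,H,q,k)$.

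For 1) $\Rightarrow$ 2), given a winning Spoiler strategy in $\EHR(G,H,q,k)$ I transcribe the inductive construction of Lemma~\ref{Ehren_2}: for each played configuration after $m$ rounds I assemble the formula $\varphi_m(x_1,\ldots,x_m)$ as $\exists$- or $\forall$-quantification (according to which graph Spoiler plays next) over a finite conjunction/disjunction indexed by a bounded set $\mathcal{U}$ or $\mathcal{V}$ of $\cong$-class representatives of the Duplicator responses. The crucial observation is that the quantifier symbol at depth $m$ of $F(\phi)$ depends only on Spoiler's round-$m$ choice of graph, not on Duplicator's prior replies; hence every root-to-leaf path in the constructed $F(\phi)$ carries one and the same sequence of quantifier labels. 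Because the strict rules force Spoiler to alternate exactly $k$ times in every play, this common sequence has exactly $k$ alternations, yielding the required $\phi$.

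The main obstacle I expect is purely notational: the ``skip'' mechanism used in Lemma~\ref{Ehren_2} to reconcile the formula-level reuse of witnesses with the game-level requirement of distinct vertices must be re-examined in the strict setting to confirm that it never introduces or removes a graph-switch (which it does not, since a skipped step keeps both $v_m$ and $u_m$ inside the previously chosen sets and involves no choice by Spoiler). Once that is noted, the equality of alternation counts is preserved at every inductive step, and the proof carries over verbatim from Lemma~\ref{Ehren_2}. I would also briefly verify that the finite-representative argument (bounded number of $\cong$-classes of formulas at fixed depth) used to keep the conjunctions and disjunctions finite is compatible with the ``exactly $k$'' refinement; this is routine, since the refinement only restricts the family of admissible nested subformulas and not the pigeonhole-on-classes step. $\Box$
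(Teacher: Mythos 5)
Your direction 2) $\Rightarrow$ 1) has a genuine gap, and it sits exactly at the point you dismiss as notational. The claim that a ``skipped'' step ``never introduces or removes a graph-switch'', and hence that Spoiler's number of alternations equals the number of label alternations on the traversed root-to-leaf path, is false: at a skipped step no game round is played at all, so the label at that depth contributes to the path's alternation count but to no switch in the actual game. For instance, if the traversed path is labelled $\exists\forall\exists$ and the $\forall$-witness happens to coincide with an already chosen vertex, the path has two alternations while Spoiler has so far only played twice in $G$ and alternated zero times. Moreover, after the formula is exhausted only $\ell(q)\leq q$ rounds have been consumed, and in $\EHR(G,H,q,k)$ Spoiler must still fill the remaining rounds and finish with \emph{exactly} $k$ alternations; your proposal never says how those rounds are played. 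This is precisely the only new ingredient the paper adds on top of Lemma~\ref{Ehren_2}: it records that the formula-guided phase produces some $\tilde k\leq k$ alternations and lets Spoiler spend the leftover $q-\ell(q)$ rounds on dummy moves (harmless, since extra chosen pairs cannot repair the discrepancy already created) so as to reach $k$ alternations overall. A clean way to repair your version is to make the dummy move at the moment of each skip, in the graph prescribed by the label of the skipped vertex, so that the game's switch count tracks the path exactly.

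In the direction 1) $\Rightarrow$ 2), your ``crucial observation'' that the quantifier label at depth $m$ does not depend on Duplicator's earlier replies, so that all root-to-leaf paths of the constructed $F(\phi)$ carry one and the same label sequence, is unjustified: Spoiler's strategy is adaptive, and whether he plays in $G$ or in $H$ at round $m+1$ depends on the position reached, hence on Duplicator's previous answers, so different branches of the constructed formula may carry different quantifier sequences. Fortunately uniformity is not needed: statement 2) only requires that every path on $q$ vertices have exactly $k$ alternations, and this holds because each such path records the sequence of graphs Spoiler plays in during one play consistent with his winning strategy in $\EHR(G,H,q,k)$, in which he alternates exactly $k$ times; this per-play argument is how the paper concludes. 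So this direction is salvageable, but as written its justification rests on a false intermediate claim, and the first direction is missing the key step.
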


{\it Proof}. First, let us prove that 2) implies 1). The winning strategy of Spoiler is absolutely the same as in the proof of Lemma~\ref{Ehren_2}. The only thing we should prove is that Spoiler alternates exactly $k$ times. If $\ell(q)<q$, then consider a path $\beta_1\ldots\beta_{\ell(q)}\beta_{\ell(q)+1}\ldots\beta_q$ in $F(\phi)$. The number of labels alternations in this path equals $k$. Therefore, $k-\tilde k\leq q-\ell(q)$. So, Spoiler can choose graphs (and an arbitrary vertex) in each of the remaining rounds in a way such that he will alternate $k$ times overall. If $\ell(q)=q$, then, obviously, $\tilde k=k$.

It remains to prove that 1) implies 2). The formula $\phi$ is constructed in the same way as in the proof of Lemma~\ref{Ehren_2}. We only need to prove that $\ch(\phi)=k$. Consider an arbitrary path $\beta_1\ldots\beta_q$ in $F(\phi)$ starting in a root. Note that $\beta_i$ is labeled by $\exists$ if and only if there exists a Duplicator's strategy such that in the $i$-th round Spoiler chooses $G$. Therefore, the number of labels alternations in this path equals $k$. $\Box$\\

\section{Spectra of formulas with small numbers of alternations}
\label{S_spectra}

Let us start this section with the following simple observation.

\begin{lemma}
If $\phi\in\mathcal{F}$ and $\alpha\in S(\phi)$, then there exists an $\NE\PNF$ sentence $\hat\phi$ such that $\ch(\phi)=\ch(\hat\phi)$ and $\alpha\in S(\hat\phi)$ as well.
\label{reduce_to_one_root}
\end{lemma}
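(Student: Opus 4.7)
The plan is to induct on the logical structure of $\phi\in\mathcal{F}$. A closed formula in $\mathcal{F}$ is either an outermost quantifier applied to a subformula with one free variable, or a conjunction/disjunction of two sentences (an atomic subformula cannot itself be closed). This dichotomy drives the induction, with Lemma~\ref{same_changes_PNF} handling the base case and a simple zero-one argument handling the inductive step.

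For the base case, if $\phi=\exists x\,(\varphi(x))$, Lemma~\ref{same_changes_PNF} directly supplies an NEPNF sentence $\hat\phi=\exists x\,(\hat\varphi(x))$ with $\hat\phi\cong\phi$ and $\ch(\hat\phi)=\ch(\phi)$; asymptotic equivalence forces $S(\hat\phi)=S(\phi)$, so $\alpha\in S(\hat\phi)$. If instead $\phi=\forall x\,(\varphi(x))$, I would pass to the dual $\phi^{\ast}$ obtained by swapping $\exists\leftrightarrow\forall$, $\wedge\leftrightarrow\vee$, $\sim\leftrightarrow\nsim$, $=\leftrightarrow\neq$ throughout. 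Then $\phi^{\ast}\in\mathcal{F}$ starts with $\exists$, its nesting forest agrees with $F(\phi)$ up to swapping the two vertex labels (so $\ch(\phi^{\ast})=\ch(\phi)$), and $\phi^{\ast}\cong\neg\phi$ gives $S(\phi^{\ast})=S(\phi)$. Apply the $\exists$-case to $\phi^{\ast}$ and dualize the resulting NEPNF back; the NEPNF template is itself self-dual (the recursive clauses for $\exists$ and $\forall$ in the NE-basis are de Morgan duals of each other), so the dualized formula is again NEPNF and has the required properties.

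For the inductive step, suppose $\phi=\phi_1\,Z\,\phi_2$ with $Z\in\{\wedge,\vee\}$ and sentences $\phi_1,\phi_2\in\mathcal{F}$. I claim $\alpha\in S(\phi_i)$ for at least one $i\in\{1,2\}$: otherwise each $P(G(n,n^{-\alpha})\models\phi_i)$ tends to $0$ or $1$, a.a.s.\ the truth value of each $\phi_i$ is constant, and the truth value of $\phi$ is then asymptotically constant, contradicting $\alpha\in S(\phi)$. Applying the induction hypothesis to such a $\phi_i$ yields an NEPNF $\hat\phi_i$ with $\alpha\in S(\hat\phi_i)$ and $\ch(\hat\phi_i)=\ch(\phi_i)$; I then take $\hat\phi:=\hat\phi_i$.

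The main obstacle is that this recursion only guarantees $\ch(\hat\phi)\leq\ch(\phi)$, since $\ch(\phi_i)$ may be strictly smaller than $\ch(\phi)=\max(\ch(\phi_1),\ch(\phi_2))$. To meet the stated equality, I would pad $\hat\phi_i$ with a block of vacuous alternating quantifiers (each quantifying a fresh variable subject only to trivially satisfied clauses, wired into the NE-basis by the prescribed equality/inequality conjuncts) that raises the alternation count to exactly $\ch(\phi)$ without affecting truth on any graph. Verifying that this padded sentence really fits the NEPNF template while preserving equivalence is the one nontrivial bookkeeping task; every other ingredient is immediate from Lemma~\ref{same_changes_PNF} and the zero-one argument above.
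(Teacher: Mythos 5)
Your proof is correct, but it reaches the conclusion by a different route than the paper. Both arguments funnel through Lemma~\ref{same_changes_PNF} at the end, and both must reduce a general positive Boolean combination of quantified subsentences to a single quantified sentence whose spectrum still contains $\alpha$; the difference is in how that reduction is done. The paper works probabilistically: from $\alpha\in S(\phi)$ it takes witness pairs $G\models\phi$, $H\models\neg(\phi)$ along the sequences $n_i,m_i$, uses monotonicity of the $\wedge/\vee$ combination to find a top-level quantified subsentence distinguishing $G$ from $H$, keeps the probabilities bounded below by a union bound over the $N$ subsentences, and then invokes finiteness of $\cong$-classes of bounded-depth sentences to fix one subsentence along a subsequence. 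You replace all of this by the cleaner deterministic inclusion $S(\phi_1\,Z\,\phi_2)\subseteq S(\phi_1)\cup S(\phi_2)$ (if both conjuncts/disjuncts obey the zero-one law at $\alpha$, so does the combination), applied via structural induction; this avoids the $\varepsilon/N$ bookkeeping and the subsequence extraction entirely. Your explicit dualization for $\forall$-rooted sentences makes precise a step the paper leaves implicit (its selected subsentence may be $\forall$-rooted), and your padding trick actually delivers the literal equality $\ch(\hat\phi)=\ch(\phi)$ stated in the lemma, whereas the paper's own construction only yields $\ch(\hat\phi)\leq\ch(\phi)$ (harmless for Theorem~\ref{minimal_changes}, where smaller alternation numbers are covered anyway). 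What the paper's route buys in exchange is that $\hat\phi$ is (an $\NE\PNF$ version of) an actual subsentence of $\phi$, so $\q(\hat\phi)\leq\q(\phi)$ automatically; this bounded depth is what is tacitly used when the lemma is applied to infinitely many $\alpha$ at once to extract a single sentence with infinite spectrum. Your padded sentences have depth at most $\q(\phi)+\ch(\phi)$, still bounded in terms of $\phi$ alone, so nothing breaks downstream; it would be worth stating that bound explicitly, and carrying out the (routine) check that each padded quantifier with its prescribed $=/\neq$ clause is vacuous on graphs with enough vertices, which you correctly identify as the only remaining bookkeeping.
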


{\it Proof.} By Lemma~\ref{same_changes_PNF}, it is enough to prove that there exists a sentence $\hat\phi=\exists x\,(\varphi(x))\in\mathcal{F}$ such that $\alpha$ belongs to its spectrum.

As $\alpha\in S(\phi)$, there exist $\varepsilon>0$ and sequences $n_i,m_i$ such that, for any $i\in\mathbb{N}$,
$$
\min\left\{{\sf P}\left(G(n_i,n_i^{-\alpha})\models\phi\right),
{\sf P}\left(G(m_i,m_i^{-\alpha})\models\neg(\phi)\right)\right\}>\varepsilon.
$$
Fix $i\in\mathbb{N}$. Let $G,H$ be graphs on $n_i,m_i$ vertices respectively such that $G\models\phi$, $H\models\neg(\phi)$. The formula $\phi$ is a logical combination (disjunctions and conjunctions) of formulas
$$
\exists x\,(\varphi_j(x)),\quad\forall x\,(\varphi^j(x)).
$$
Let $N$ be the number of all formulas in this combination. Obviously, there exists either $j$ such that $G\models\exists x\,(\varphi_j(x))$, $H\models\neg(\exists x\,(\varphi_j(x)))$ or $j$ such that $G\models\forall x\,(\varphi^j(x))$, $H\models\neg(\forall x\,(\varphi^j(x)))$. Therefore, there exists $\varphi(x)=\varphi(x,i)\in\mathcal{F}$ such that $\ch(\exists x\,(\varphi(x)))\leq\ch(\phi)$, $\q(\exists x\,(\varphi(x)))\leq\q(\phi)$ and
$$
 \min\left\{{\sf P}\left(G(n_i,n_i^{-\alpha})\models\exists x\,(\varphi(x))\right), {\sf P}\left(G(m_i,m_i^{-\alpha})\models\neg(\exists x\,(\varphi(x)))\right)\right\}>\varepsilon/N.
$$
Set $\hat\phi_i=\exists x\,(\varphi(x,i))$. As there is only a finite number of representatives of $\cong$-equivalence classes of sentences in $\mathcal{F}$ with a fixed quantifier depth (see, e.g.,~\cite{Logic2}),
there is only a finite number of representatives of $\cong$-equivalence classes in $\{\hat\phi_i,i\in\mathbb{N}\}$ as well. Therefore, there exists a sentence $\hat\phi=\exists x\,(\varphi(x))$ and a sequence $i_j$ such that, for all $j\in\mathbb{N}$,
$$
 \min\left\{{\sf P}\left(G(n_{i_j},n_{i_j}^{-\alpha})\models\hat\phi\right), {\sf P}\left(G(m_{i_j},m_{i_j}^{-\alpha})\models\neg(\hat\phi)\right)\right\}>\varepsilon/N.
$$
So, $\alpha\in S(\hat\phi)$. $\Box$\\

Below, we state the main result of this section, which implies the following answer on Q2:

{\it the minimal number of quantifier alternations of a first order sentence with an infinite spectrum equals $3$}.

\begin{theorem}
The minimal $k$ such that there exists $\phi\in\mathcal{F}$ with infinite $S(\phi)$ and $\ch(\phi)=k$ equals $3$.
\label{minimal_changes}
\end{theorem}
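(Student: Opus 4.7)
The statement has two directions. The upper bound $k\le 3$ is immediate from Theorem~\ref{4_or_5}: the authors already note that the sentence $\phi$ displayed there satisfies $\ch(\phi)=3$, and the theorem itself exhibits the infinite set $\{1/2+1/(2(m+1)):m\in\mathbb{N}\}\subset S(\phi)$. What remains is the lower bound $k\ge 3$, i.e., that every $\phi\in\mathcal{F}$ with $\ch(\phi)\le 2$ has finite $S(\phi)$.

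Here I would argue as follows. Suppose $\alpha\in S(\phi)$ with $\ch(\phi)\le 2$. By Lemma~\ref{reduce_to_one_root} there is an $\NE\PNF$ sentence $\hat\phi$ starting with $\exists$, with $\ch(\hat\phi)=\ch(\phi)$ and $\alpha\in S(\hat\phi)$. The trivial identity $S(\hat\phi)=S(\neg\hat\phi)$, together with the fact that negating a $\PNF$ sentence swaps $\forall\leftrightarrow\exists$ throughout the prefix and so preserves the alternation count, lets me replace $\hat\phi$ by its negation when convenient. Consequently I may assume the quantifier prefix of $\hat\phi$ (or of $\neg\hat\phi$) has one of the three forms: (a) $\exists\ldots\exists$, (b) $\forall\ldots\forall\exists\ldots\exists$, or (c) $\forall\ldots\forall\exists\ldots\exists\forall\ldots\forall$.

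For (a) and (b), writing the quantifier-free body of $\hat\phi$ in disjunctive normal form and distributing the innermost block of existentials over the disjuncts expresses $\hat\phi$ as a Boolean combination of finitely many existence or extension properties built from finitely many pairs $(G,H)$ with $H\subset G$. By Theorem~\ref{existence_and_extension}, each such constituent property has probability tending to $0$ or $1$ for every $\alpha$ outside the finite set $\{1/\rho(H),1/\rho(G,H)\}$; hence so does every Boolean combination of them, and $S(\hat\phi)$ is contained in the finite union of these threshold sets. Case (c) is analogous but uses Theorem~\ref{double-extension}: after expanding and distributing, $\hat\phi$ becomes a Boolean combination of finitely many double-extension properties attached to triples $(W,G,H)$; each is determined on every $\alpha$ outside the thresholds $\{1/\rho(W,G),1/\rho(G,H)\}$, the interior regime being covered by Theorem~\ref{double-extension} and the outer regimes by reducing to the associated extension or existence property and applying Theorem~\ref{existence_and_extension}.

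The main obstacle I anticipate is the combinatorial bookkeeping in case (c): one must read off from the body of $\hat\phi$ the correct finite family $\mathcal{W}$ of witness graphs, verify the hypothesis $\rho(W,G)>\rho(G,H)$ needed to apply Theorem~\ref{double-extension} on the relevant $\alpha$-interval, and relate the double-extension statement back to a simpler extension or existence statement outside that interval. Once this is done, each of the finitely many constituent properties is asymptotically determined for all but finitely many $\alpha$, hence so is their Boolean combination, yielding $|S(\hat\phi)|<\infty$ and completing the lower bound.
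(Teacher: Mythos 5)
Your skeleton matches the paper's: the upper bound comes from Theorem~\ref{4_or_5}, and the lower bound reduces, via Lemma~\ref{reduce_to_one_root} and $S(\phi)=S(\neg\phi)$, to showing that sentences with prefix $\exists\ldots\exists$, $\forall\ldots\forall\exists\ldots\exists$, and $\forall\ldots\forall\exists\ldots\exists\forall\ldots\forall$ have finite spectra; the first two cases go essentially as you sketch (the paper does not literally write them as Boolean combinations of $(G,H)$-extension properties, but the threshold comparison with the finitely many densities $\rho(A_i)$, $\rho(B,A_i)$ is the same idea). The genuine gap is in your case (c). A two-alternation sentence does \emph{not} become, after putting the matrix in DNF and distributing, a Boolean combination of the $(\mathcal{W},G,H)$-double-extension properties of Theorem~\ref{double-extension}; and even if it did, Theorem~\ref{double-extension} is one-sided: it only asserts that the property holds a.a.s.\ for $\alpha\in(1/\rho(W,G),1/\rho(G,H))$, and says nothing outside that interval, so the claim that each such property ``is determined on every $\alpha$ outside the thresholds'' has no support. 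The real difficulty is that the truth of the inner $\forall w_1\ldots\forall w_r$ block at a tuple $(\bar y,\bar x)$ depends on whether that tuple lies inside an atypically dense subgraph of the random graph: forbidden diagrams whose relative density over $\{\bar y,\bar x\}$ exceeds $1/\alpha$ still occur at special locations, the adversarial choice of $\bar y$ can be made precisely at such locations, and the witnesses $\bar x$ must then be chosen relative to the dense environment of $\bar y$. This interaction is not captured by a fixed finite list of thresholds $1/\rho(W,G)$, $1/\rho(G,H)$ read off from the formula; indeed it is exactly this mechanism that produces infinite spectra at three alternations.

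What the paper does instead, and what your proposal is missing, is the whole of its Sections 4.3.1--4.3.4: first it invokes the bound $S(\phi)\cap(0,1/(q-2))=\varnothing$ from~\cite{Zhuk0} to assume the numerator of $\alpha$ is large (so all density comparisons are strict); then, for any choice of $y_1,\ldots,y_s$, it builds an iterative ``dense closure'' $Y(\Gamma;y_1,\ldots,y_s)$ absorbing all rigid extensions by at most $r$ vertices, encodes its relevant structure in a finite vector $\mathbf{N}$, shows there is a bounded-size representative $Y_{\min}$ with the same structure, and finally proves that any two graphs satisfying three a.a.s.\ properties G1--G3 (subgraph, extension, and double-extension statements applied to graphs as large as the closure, not just to the $s+m+r$ quantified vertices) agree on $\phi$. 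That transfer argument between the dense neighbourhood of $\bar y$ in one graph and a copy of $Y_{\min}$ in the other is the core of the proof of the two-alternation case, not bookkeeping, and your proposal as written would fail without it.
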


{\it Proof.} By Lemma~\ref{reduce_to_one_root} and Theorem~\ref{4_or_5}, it is enough to prove that, for any $k\in\{0,1,2\}$ and any NEPNF sentence $\phi=\exists x\,(\varphi(x))\in\mathcal{F}$ with $\ch(\phi)=k$, the set $S(\phi)$ is finite. Note that $S(\phi)=S(\neg(\phi))$. Therefore, equivalently, we may prove that spectra of sentences $\forall x\,(\varphi(x))$ are finite.

Obviously, $k\in\{0,1\}$ are subcases of $k=2$. However, below we consider $k=0$, $k=1$ alone for the sake of convenience.\\

Let $\phi_H\in\mathcal{F}$ be an existence sentence which expresses the property of containing ad induced subgraph isomorphic to $H$.\\

\subsection{No alternations}

Let $\ch(\phi)=0$, where $\phi=\exists x\,(\varphi(x))\in\mathcal{F}$ is an NEPNF sentence. Obviously, there exists a finite set $\mathcal{G}$ of graphs such that $G\models\phi$ if and only if in $G$ there is an induced subgraph which is isomorphic to some $H\in\mathcal{G}$. We get
$$
 \phi\cong\bigvee_{H\in\mathcal{G}}(\phi_H).
$$
By Theorem~\ref{existence_and_extension}, either $\rho:=\min_{H\in\mathcal{G}}\{\rho(H)\}>0$ and $S(\phi)\subset\{1/\rho\}$, or $\rho=0$ and $S(\phi)=\varnothing$.

\subsection{One alternation}

Let $\ch(\phi)=1$, where
$$
\phi=\forall y_1\ldots\forall y_s\exists x_1\ldots\exists x_m\,( \varphi(y_1,\ldots,y_s,x_1,\ldots,x_m))\in\mathcal{F}
$$
has the quantifier depth $s+m$. Obviously, there exists a finite set $\mathcal{G}$ of graphs on a set of vertices $\{a_1,\ldots,a_s\}$ and, for each $A\in\mathcal{G}$, there exists a finite set $\mathcal{H}(A)$ of graphs on a set of vertices $\{a_1,\ldots,a_s,b_1,\ldots,b_m\}$ such that
\begin{itemize}
\item for any $A\in\mathcal{G}$ and $B\in\mathcal{H}(A)$, $A=B|_{\{a_1,\ldots,a_s\}}$,
\item $G\models\phi$ if and only if, for any distinct vertices $y_1,\ldots,y_s\in V(G)$, there exist distinct vertices $x_1,\ldots,x_m\in V(G)$ ($x_j\neq y_i$) and graphs $A\in\mathcal{G}$, $B\in\mathcal{H}(A)$ such that the map $f:B\rightarrow G|_{\{y_1,\ldots,y_s,x_1,\ldots,x_m\}}$, $f(a_i)=y_i$, $f(b_j)=x_j$, is an isomorphism.
\end{itemize}
Let all graphs $A_1,\ldots,A_M$ of $\mathcal{G}$ be ordered in a way such that
\begin{equation}
\rho_1:=\rho(A_1)\geq\ldots\geq\rho(A_M)=:\rho_M.
\label{graphs_densities_notation}
\end{equation}
For each $i\in\{1,\ldots,M\}$, let $\rho^i=\min\{\rho(B,A_i),B\in\mathcal{H}(A_i)\}.$

Suppose that $1/\alpha$ is not equal to any of $\rho_i,\rho^i$, $i\in\{1,\ldots,M\}$. If there is a graph on the set of vertices $\{a_1,\ldots,a_s\}$ which does not belong to $\mathcal{G}$ such that its maximal density is less than $1/\alpha$, then, by Theorem~\ref{existence_and_extension}, $G(n,p)\models\neg(\phi)$ (a.a.s.). Suppose that the above property does not hold. This implies that $\rho_M=0$. Set $\rho_0=\infty$, $1/\rho_0=0$ and $1/\rho_M=\infty$. Let $i_0\in\{0,1,\ldots,M-1\}$ be chosen in the following way: $1/\rho_{i_0}<\alpha<1/\rho_{i_0+1}$. If for some $i\in\{i_0+1,\ldots,M\}$ the inequality $\rho^i>1/\alpha$ holds, then, by Theorem~\ref{existence_and_extension}, $G(n,p)\models\neg(\phi)$ (a.a.s.). Otherwise, $G(n,p)\models\phi$ (a.a.s.). Thus, $S(\phi)\subseteq\{1/\rho_1,\ldots,1/\rho_M,1/\rho^1,\ldots,1/\rho^M\}$, and so $|S(\phi)|<\infty$.

\subsection{Two alternations}

In this case, it is not enough to define sets of graphs as above. We divide the proof into four parts. Only the first part ``Transition to sets of graphs'' is similar to the previous cases.

\subsubsection{Transition to sets of graphs}
\label{beginning}

Let a sentence
$$
\phi=\forall y_1\ldots\forall y_s\exists x_1\ldots\exists x_m\forall w_1\ldots\forall w_r\,( \varphi(y_1,\ldots,y_s,x_1,\ldots,x_m,w_1,\ldots,w_r))\in\mathcal{F}
$$
has the quantifier depth $s+m+r$.

Obviously, there exists a set of vertices $\Sigma=\Sigma_a\sqcup\Sigma_b\sqcup\Sigma_c$, where  $\Sigma_a=\{a_1,\ldots,a_s\}$, $\Sigma_b=\{b_1,\ldots,b_m\}$, $\Sigma_c=\{c_1,\ldots,c_r\}$, and
\begin{itemize}
\item[---] a finite set of graphs $\mathcal{G}$ on the set of vertices $\Sigma_a$,

\item[---] for each $A\in\mathcal{G}$, a finite set $\mathcal{H}(A)$ on the set of vertices $\Sigma_a\sqcup\Sigma_b$,

\item[---] for each $A\in\mathcal{G}$ and $B\in\mathcal{H}(A)$, a finite set of graphs $\mathcal{K}(B)$ on the set of vertices $\Sigma$,
\end{itemize}
such that the following properties hold.
\begin{itemize}
\item For any $A\in\mathcal{G}$, $B\in\mathcal{H}(A)$, $C\in\mathcal{K}(B)$, we have $A=B|_{\Sigma_a}$, $B=C|_{\Sigma_a\sqcup\Sigma_b}$.
\item $G\models\phi$ if and only if for any pairwise distinct $y_1,\ldots,y_s$ from $V(G)$ there exist pairwise distinct $x_1,\ldots,x_m$ from $V(G)\setminus\{y_1,\ldots,y_s\}$ such that for any pairwise distinct $w_1,\ldots,w_r$ from $V(G)\setminus\{y_1,\ldots,y_s,x_1,\ldots,x_m\}$ the graph $G$ has the property $P(y_1,\ldots,y_s,x_1,\ldots,x_m,w_1,\ldots,w_r)$ (which is defined below).
\end{itemize}

Let us say that {\it $G$ has the property $P(y_1,\ldots,y_s,x_1,\ldots,x_m,w_1,\ldots,w_r)$}, if
there exist graphs $A\in\mathcal{G}$, $B\in\mathcal{H}(A)$, $C\in\mathcal{K}(B)$ such that the map $f:C\rightarrow G|_{\{y_1,\ldots,y_s,x_1,\ldots,x_m,w_1,\ldots,w_r\}}$ which preserves the orders of the vertices ($f(a_i)=y_i$, $f(b_j)=x_j$, $f(c_h)=w_h$) is an isomorphism.\\

Theorem~3 from~\cite{Zhuk0} implies that $\alpha\notin S(\phi)$ for any $\alpha<\frac{1}{s+m+r-2}$. Therefore, for any positive integer $N$, the set of numbers from $S(\phi)$ with a numerator at most $N$ is finite. So, we may assume that the numerator of $\alpha$ is large enough. As in the case of one alternation, we assume that any graph on the set of vertices $\Sigma_a$ with a maximal density less than $1/\alpha$ belongs to $\mathcal{G}$.

\subsubsection{Dense neighbourhood and its structure}
\label{dense_neighborhood}

Let $\Gamma$ be an arbitrary graph on a set of vertices $V$ with the following property. There is $A\in\mathcal{G}$ and pairwise distinct vertices $y_1,\ldots,y_s\in V$ such that the map $A\rightarrow \Gamma|_{\{y_1,\ldots,y_s\}}$ which preserves the orders of the vertices is an isomorphism.

Let $Y_0=\Gamma|_{\{y_1,\ldots,y_s\}}$. For each $i\geq 0$, let us construct an induced subgraph $Y_{i+1}$ of $\Gamma$ on the union of $V(Y_i)$ with some additional vertices (for a step $\tilde i$, this process halts, set $Y=Y_{\tilde i}$). For a step $i$ the process {\it does not halt}, if there exists a subgraph $W\subset\Gamma$ such that $W\supset Y_i$, $v(W)-v(Y_i)\leq r$ and $\rho(W,Y_i)>1/\alpha$. For such a graph $W$, set $Y_{i+1}=W$.

{\it The graph $Y=Y(\Gamma;y_1,\ldots,y_s)$ is constructed.} Before proceeding with the next part of the proof, let us study a structure of $Y$ and introduce some notations for describing this structure.

\begin{itemize}
\item Let $\mathcal{U}=\mathcal{U}(A)=\{B_1,\ldots,B_{\beta}\}$ be the set of all graphs $B$ on the set of vertices $\Sigma_a\cup\Sigma_b$ such that $B|_{\Sigma_a}=A$. Obviously, $\beta=2^{C_m^2+sm}$.

\item Let $x^0_1,\ldots,x^0_m$ be arbitrary vertices which are not in $V(Y)$ (and even not necessarily in $V$).

\item Let $\ell\in\{1,\ldots,\beta\}$, $\tilde m\in\{0,\ldots,m\}$. Consider the set $\mathcal{X}_{\ell,\tilde m}$ of all collections of vertices $x_1,\ldots,x_{\tilde m}\in V(Y)\setminus\{y_1,\ldots,y_s\}$ such that there exists a graph $W$ on the set of vertices $V(Y)\cup\{x^0_{\tilde m+1},\ldots,x^0_m\}$ and an isomorphism $f:B_{\ell}\to W|_{\{y_1,\ldots,y_s,x_1,\ldots,x_{\tilde m},x^0_{\tilde m+1},\ldots,x^0_m\}}$ which preserves the orders of the vertices ($f(a_i)=y_i$, $f(b_j)\in\{x_j,x^0_j\}$).

\item For each $\ell\in\{1,\ldots,\beta\}$, $\tilde m\in\{0,\ldots,m\}$, $(x_1,\ldots,x_{\tilde m})\in\mathcal{X}_{\ell,\tilde m}$, consider the set $\mathcal{S}_{\ell}(x_1,\ldots,x_{\tilde m})$ of all graphs $W$ on the set of vertices $V(Y)\cup\{x^0_{\tilde m+1},\ldots,x^0_m\}$ such that $W|_{V(Y)}=Y$, $\rho(W,Y)<1/\alpha$ and the map $f:B_{\ell}\to W|_{\{y_1,\ldots,y_s,x_1,\ldots,x_{\tilde m},x^0_{\tilde m+1},\ldots,x^0_m\}}$ which preserves the orders of the vertices ($f(a_i)=y_i$, $f(b_j)\in\{x_j,x^0_j\}$) is an isomorphism. Moreover, for each $W\in\mathcal{S}_{\ell}(x_1,\ldots,x_{\tilde m})$ consider the set $\mathcal{N}_{\ell}(W;x_1,\ldots,x_{\tilde m})$ of all graphs $C$ on the sets of vertices $\Sigma_a\cup\Sigma_b\cup\{c_1,\ldots,c_{\tilde r}\}$, where $\tilde r\leq r$, such that there exists
    a subgraph $Z\subset W$ containing the vertices $y_1,\ldots,y_s$, $x_1,\ldots,x_{\tilde m}$, $x^0_{\tilde m+1},\ldots,x^0_m$, and the following two properties hold.  First, there exist
    vertices $w_1,\ldots,w_{\tilde r}\in V(Y)$ and an isomorphism $f:C\rightarrow Z$ which preserves the orders of the vertices ($f(a_i)=y_i$, $f(b_j)\in\{x_j,x^0_j\}$, $f(c_h)=w_h$). Second,
    $$
    \rho\left(Z,Z|_{\{y_1,\ldots,y_s,x_1,\ldots,x_{\tilde m},x^0_{\tilde m+1},\ldots,x^0_m\}}\right)>1/\alpha.
    $$

\item For each $\ell\in\{1,\ldots,\beta\}$, denote by
    $(\mathcal{N})_{\ell}[Y;y_1,\ldots,y_s]$ a maximal set of pairwise distinct sets among $\mathcal{N}_{\ell}(W;x_1,\ldots,x_{\tilde m})$, $W\in\mathcal{S}_{\ell}$.

\end{itemize}

The vector $\mathbf{N}=((\mathcal{N})_1[Y;y_1,\ldots,y_s],\ldots,(\mathcal{N})_{\beta}[Y;y_1,\ldots,y_s])$ {\it defines the structure} of $Y$.

\subsubsection{Existence of a bounded graph with the same structure}

Let $\{y_1,\ldots,y_s\}$ be an arbitrary set of vertices, and $A\in\mathcal{G}$.

Consider an arbitrary graph $\Gamma$ which contains the vertices $y_1,\ldots,y_s$ such that the map $A\rightarrow \Gamma|_{\{y_1,\ldots,y_s\}}$ (preserving the orders of the vertices) is an isomorphism. Let $\ell\in\{1,\ldots,\beta\}$ (where $\beta$ is the cardinality of $\mathcal{U}(A)=\{B_1,\ldots,B_{\beta}\}$).  Determine the vector $(\mathcal{N})_{\ell}:=(\mathcal{N})_{\ell}[Y(\Gamma;y_1,\ldots,y_s);y_1,\ldots,y_s]$.
Let $\mathbf{Y}=\mathbf{Y}(\Gamma;y_1,\ldots,y_s)$ be the set of all graphs $Y$ such that $Y|_{\{y_1,\ldots,y_s\}}=\Gamma|_{\{y_1,\ldots,y_s\}}$, and
$(\mathcal{N})_{\ell}=(\mathcal{N})_{\ell}[Y;y_1,\ldots,y_s]$ for all $\ell\in\{1,\ldots,\beta\}$. Let the graph $Y_{\min}(\mathbf{Y};y_1,\ldots,y_s)$ has a minimal number of vertices among the graphs in the set
$$
 \{Y\in\mathbf{Y}:\quad\forall\tilde Y\in\mathbf{Y}\,(\rho(\tilde Y)\geq \rho(Y))\}
$$
(and, of course, belongs to this set).

Note that the set $\mathbf{Y}(\Gamma;y_1,\ldots,y_s)$ is defined by the vector $\mathbf{N}=((\mathcal{N})_1,\ldots,(\mathcal{N})_{\beta})$ only. Therefore, for the vertices $y_1,\ldots,y_s$ there exist only finite set of pairwise distinct sets $\mathbf{Y}(\cdot;y_1,\ldots,y_s)$. So, the set of pairwise distinct graphs $Y_{\min}(\cdot;y_1,\ldots,y_s)$ is finite. Let
$$
Y_{\min}^1(y_1,\ldots,y_s),\ldots,Y_{\min}^{\theta}(y_1,\ldots,y_s)
$$
be all such graphs.

\subsubsection{Finiteness of the spectrum}

Recall that the numerator of the irreducible fraction $\alpha=\frac{R}{P}$ is large enough (see Section~\ref{beginning}). So, we assume that $R>\max\{s+m+r,N\}$, where
$$
N:=\max\left\{v(Y_{\min}^1(y_1,\ldots,y_s)),\ldots,v(Y_{\min}^{\theta}(y_1,\ldots,y_s))\right\}.
$$
Note that $N$ does not depend on a choice of $y_1,\ldots,y_s$.\\

Theorems~\ref{existence_and_extension},~\ref{double-extension} imply that a.a.s. the random graph $G(n,n^{-\alpha})$ has the following properties:

\begin{itemize}
\item[G1] for any $H$ with $\rho(H)>1/\alpha$ and $v(H)\leq s+r(sP+1)$, there is no subgraph isomorphic to $H$;
\item[G2] for any $H\subset G$ with $v(G)\leq s+m+r$ and $\rho(G,H)<1/\alpha$, there is the $(G,H)$-extension property;
\item[G3] for any $H\subset G$ with $v(G)\leq \max\{N,m+s+rsP\}$, $\rho(G,H)<1/\alpha$ and set $\mathcal{W}$ of graphs $W$ on a fixed set of vertices such that
\begin{itemize}
\item $G\subset W$, $1\leq v(W)-v(G)\leq r+m$, $\rho(W,G)>1/\alpha$,
\item $W\setminus G$ is connected,
\item there are edges between $W\setminus G$ and $G$ in $W$,
\end{itemize}
there is the $(\mathcal{W},G,H)$-double-extension property.
\end{itemize}

Let us prove that if the graphs $\Gamma,\Upsilon$ have the properties G1, G2 and G3, then either $\phi$ is true for both of them, or $\phi$ is false for both of them. This would imply that $\alpha\notin S(\phi)$.\\

Assume that $\Gamma\models\neg(\phi)$, $\Upsilon\models\phi$. By the property G1, a maximal density of any subgraph of $\Gamma$ on $s$ vertices is less than $1/\alpha$. All graphs on the set of vertices $\Sigma_a$ with such a maximal density are in $\mathcal{G}$ (see Section~\ref{beginning}). Therefore, there exist $A\in\mathcal{G}$ and pairwise distinct $y_1,\ldots,y_s\in V(\Gamma)$ such that the map $A\rightarrow \Gamma|_{\{y_1,\ldots,y_s\}}$ which preserves the orders of the vertices is an isomorphism, and $\Gamma$ with distinguished vertices $y_1,\ldots,y_s$ {\it does not have} the property (EXT), which is defined below.
\begin{center}
(EXT): {\it there exist pairwise distinct $x_1,\ldots,x_m\in V(\Gamma)\setminus\{y_1,\ldots,y_s\}$ such that for any pairwise distinct $w_1,\ldots,w_r\in V(\Gamma)\setminus\{y_1,\ldots,y_s,x_1,\ldots,x_m\}$ there exist graphs $B\in\mathcal{H}(A)$, $C\in\mathcal{K}(B)$ and an isomorphism $f:C\rightarrow \Gamma|_{\{y_1,\ldots,y_s,x_1,\ldots,x_m,w_1,\ldots,w_r\}}$ which preserves the orders of the vertices ($f(a_i)=y_i$, $f(b_j)=x_j$, $f(c_h)=w_h$).}
\end{center}

Construct the graph $Y=Y(\Gamma;y_1,\ldots,y_s)$ as it is done in Section~\ref{dense_neighborhood}. Let us prove that $v(Y)\leq s+rsP$. Assume that the opposite inequality is true. By the definition of $Y$, there is a subgraph $X\subset Y$ on at most $s+r(sP+1)$ vertices such that, for some $v_1,\ldots,v_{sP+1}\in\{1,\ldots,r\}$,
$$
 \rho(X)\geq\frac{(1/\alpha)v_1+\ldots+(1/\alpha)v_{sP+1}+\frac{sP+1}{R}}
 {s+v_1+\ldots+v_{sP+1}}=\frac{1}{\alpha}+\frac{1}{R(s+v_1+\ldots+v_{sP+1})}>\frac{1}{\alpha}.
$$
This contradicts the property G1. So, $v(Y)\leq s+rsP$, and, therefore, $\rho(Y)\leq 1/\alpha$.

Consider the graph $Y_{\min}=Y_{\min}(\mathbf{Y}(\Gamma;y_1,\ldots,y_s);y_1,\ldots,y_s)$. We have $\rho(Y_{\min})\leq\rho(Y)\leq 1/\alpha$. As $v(Y_{\min})\leq N<R$, the equality $\rho(Y_{\min})=1/\alpha$ is impossible, and so $\rho(Y_{\min})<1/\alpha$.\\

By the property G3, in $\Upsilon$ there is an induced subgraph $Y^{\Upsilon}\cong Y_{\min}$ such that in $\Upsilon$ there is no subgraph $W\supset Y^{\Upsilon}$ with $v(W)-v(Y^{\Upsilon})\leq r+m$ and $\rho(W,Y^{\Upsilon})>1/\alpha$. Let $f:Y_{\min}\rightarrow Y^{\Upsilon}$ be an isomorphism. Set $f(y_i)=y^{\Upsilon}_i$, $i\in\{1,\ldots,s\}$. As $\Upsilon\models\phi$, $\Upsilon$ with distinguished vertices $y^{\Upsilon}_1,\ldots,y^{\Upsilon}_s$ has the property (EXT). Let $x_1^{\Upsilon},\ldots,x_m^{\Upsilon}\in V(\Upsilon)\setminus\{y^{\Upsilon}_1,\ldots,y^{\Upsilon}_s\}$ and $B\in\mathcal{H}(A)$ be such that
for any pairwise distinct $w^{\Upsilon}_1,\ldots,w^{\Upsilon}_r\in V(\Upsilon)\setminus\{y^{\Upsilon}_1,\ldots,y^{\Upsilon}_s,x^{\Upsilon}_1,\ldots,x^{\Upsilon}_m\}$ there exist a graph $C\in\mathcal{K}(B)$ and an isomorphism $g:C\rightarrow \Upsilon|_{\{y^{\Upsilon}_1,\ldots,y^{\Upsilon}_s,x^{\Upsilon}_1,\ldots,x^{\Upsilon}_m,w^{\Upsilon}_1,\ldots,w^{\Upsilon}_r\}}$ which preserves the orders of the vertices ($g(a_i)=y^{\Upsilon}_i$, $g(b_j)=x^{\Upsilon}_j$, $g(c_h)=w^{\Upsilon}_h$).

From the property G1 it follows that
$$
\rho\left(\Upsilon|_{V(Y^{\Upsilon})\cup\{x_1^{\Upsilon},\ldots,x_m^{\Upsilon}\}},Y^{\Upsilon}\right)<1/\alpha
$$
if at least one of the vertices $x_1^{\Upsilon},\ldots,x_m^{\Upsilon}$ is not in $Y^{\Upsilon}$. Indeed, there is no equality, because $v\left(\Upsilon|_{V(Y^{\Upsilon})\cup\{x_1^{\Upsilon},\ldots,x_m^{\Upsilon}\}}\right)-
v\left(Y^{\Upsilon}\right)\leq m$. Let $x_1,\ldots,x_{\tilde m}\in Y^{\Upsilon}$, $x_{\tilde m+1},\ldots,x_m\in V(\Upsilon)\setminus V(Y^{\Upsilon})$, where $\tilde m\in\{0,1,\ldots,m\}$. From the property G3, the definitions of $Y$ and $Y_{\min}$ it follows that there exist vertices $x_1,\ldots,x_{\tilde m}\in V(Y)$, $x_{\tilde m+1},\ldots,x_m\in V(\Gamma)\setminus V(Y)$ such that the following properties hold.

\begin{itemize}

\item[Q1] There exists an isomorphism $f:B\rightarrow\Gamma|_{\{y_1,\ldots,y_s,x_1,\ldots,x_m\}}$ which preserves the orders of the vertices ($f(a_i)=y_i$, $f(b_j)=x_j$).

\item[Q2] There is no $W\subset\Gamma$ such that $W\supset\Gamma|_{V(Y)\cup\{x_1,\ldots,x_m\}}$,
    $$
    v(W)-v\left(\Gamma|_{V(Y)\cup\{x_1,\ldots,x_m\}}\right)\leq r\quad\text{and}\quad \rho\left(W,\Gamma|_{V(Y)\cup\{x_1,\ldots,x_m\}}\right)>1/\alpha.
    $$

\item[Q3] Let $C$ be a graph on a set of vertices $\{a_1,\ldots,a_s,b_1,\ldots,b_m,c_1,\ldots,c_{\tilde r}\}$ (where $\tilde r\leq r$). Let $Z\subseteq\Gamma$ be a graph consisting of the vertices $y_1,\ldots,y_s,x_1,\ldots,x_m$ and some vertices $w_1,\ldots,w_{\tilde r}\in V(Y)$. Moreover, let the map $f:C\rightarrow Z$ which preserves the orders of the vertices ($f(a_i)=y_i$, $f(b_j)=x_j$, $f(c_h)=w_h$) be an isomorphism, and
    $$
    \rho\left(Z,Z|_{\{y_1,\ldots,y_s,x_1,\ldots,x_m\}}\right)>1/\alpha.
    $$
    Then, in $\Upsilon$ there is a subgraph $Z^{\Upsilon}$ consisting of the vertices $y^{\Upsilon}_1,\ldots,y^{\Upsilon}_s$, $x^{\Upsilon}_1,\ldots,x^{\Upsilon}_m$ and some vertices $w^{\Upsilon}_1,\ldots,w^{\Upsilon}_{\tilde r}\in V(Y^{\Upsilon})$ such that the map $f:C\rightarrow Z^{\Upsilon}$ which preserves the orders of the vertices ($f(a_i)=y^{\Upsilon}_i$, $f(b_j)=x^{\Upsilon}_j$, $f(c_h)=w^{\Upsilon}_h$) is an isomorphism.

\end{itemize}

By our assumption, there exist $w_1,\ldots,w_r\in V(\Gamma)\setminus\{y_1,\ldots,y_s,x_1,\ldots,x_m\}$ such that for any $C\in\mathcal{K}(B)$ the map $f:C\rightarrow \Gamma|_{\{y_1,\ldots,y_s,x_1,\ldots,x_m,w_1,\ldots,w_r\}}$ which preserves the orders of the vertices ($f(a_i)=y_i$, $f(b_j)=x_j$, $f(c_h)=w_h$) is not an isomorphism. If
\begin{equation}
\rho\left(\Gamma|_{\{y_1,\ldots,y_s,x_1,\ldots,x_m,w_1,\ldots,w_r\}},    \Gamma|_{\{y_1,\ldots,y_s,x_1,\ldots,x_m\}}\right)<1/\alpha,
\label{sparse}
\end{equation}
then by the property G2 in $\Upsilon$ there are vertices $w^{\Upsilon}_1,\ldots,w^{\Upsilon}_r$ such the the map
\begin{equation}
f:\Gamma|_{\{y_1,\ldots,y_s,x_1,\ldots,x_m,w_1,\ldots,w_r\}}\to   \Upsilon|_{\{y_1^{\Upsilon},\ldots,y_s^{\Upsilon},x_1^{\Upsilon},\ldots,x_m^{\Upsilon},
w_1^{\Upsilon},\ldots,w_r^{\Upsilon}\}}
\label{map}
\end{equation}
which preserves the orders of the vertices ($f(y_i)=y_i^{\Upsilon}$, $f(x_j)=x_j^{\Upsilon}$, $f(w_h)=w_h^{\Upsilon}$) is an isomorphism --- a contradiction.\\

If $w_1,\ldots,w_r\in V(\Gamma)\setminus V(Y)$, then Inequality~(\ref{sparse}) holds (there is no equality, because $v\left(\Gamma|_{\{y_1,\ldots,y_s,x_1,\ldots,x_m,w_1,\ldots,w_r\}}\right)-    v\left(\Gamma|_{\{y_1,\ldots,y_s,x_1,\ldots,x_m\}}\right)=r<R$).

If $w_1,\ldots,w_r\in V(Y)$ and
$$
\rho\left(\Gamma|_{\{y_1,\ldots,y_s,x_1,\ldots,x_m,w_1,\ldots,w_r\}},    \Gamma|_{\{y_1,\ldots,y_s,x_1,\ldots,x_m\}}\right)>1/\alpha,
$$
then, the definition of $Y^{\Upsilon}$ implies the existence of vertices $w^{\Upsilon}_1,\ldots,w^{\Upsilon}_r$ such that the map~(\ref{map}) which preserves the orders of the vertices is an isomorphism --- a contradiction.

Finally, let some (not all) of the vertices $w_1,\ldots,w_r$ be in $V(Y)$ (say, $w_1\ldots,w_{\tilde r}\in V(Y)$, $w_{\tilde r+1},\ldots,w_r\in V(\Gamma)\setminus V(Y)$). In $Y^{\Upsilon}$ there exist vertices $w^{\Upsilon}_1,\ldots,w^{\Upsilon}_{\tilde r}$ such that the map $f:\Gamma|_{\{y_1,\ldots,y_s,x_1,\ldots,x_m,w_1,\ldots,w_{\tilde r}\}}\to   \Upsilon|_{\{y_1^{\Upsilon},\ldots,y_s^{\Upsilon},x_1^{\Upsilon},\ldots,x_m^{\Upsilon},
w_1^{\Upsilon},\ldots,w_{\tilde r}^{\Upsilon}\}}$ which preserves the orders of the vertices is an isomorphism. Moreover,
$$
\rho\left(\Gamma|_{\{y_1,\ldots,y_s,x_1,\ldots,x_m,w_1,\ldots,w_r\}},    \Gamma|_{\{y_1,\ldots,y_s,x_1,\ldots,x_m,w_1,\ldots,w_{\tilde r}\}}\right)<1/\alpha.
$$
By the property G2, in $\Upsilon$ there exist vertices $w^{\Upsilon}_{\tilde r+1},\ldots,w^{\Upsilon}_r$ such that the map~(\ref{map}) which preserves the orders of the vertices is an isomorphism --- a contradiction. $\Box$\\

\section{Spectra of formulas with small quantifier depths}
\label{depth}

Theorem~\ref{minimal_changes} answers the second question of Section~\ref{laws}. In this section we partially answer the first and the third questions.

\subsection{Counting quantifier alternations}
We do not have a complete answer on the third question. However, in our second main result, we get a new lower bound on the minimal quantifier depth of PNF sentence with an infinite spectrum.

\begin{theorem}
The minimal $q$ such that there exists a $\PNF$ sentence $\phi\in\mathcal{F}$ with infinite $S(\phi)$ and $\q(\phi)=q$ is at least 5.
\label{PNF_spectra}
\end{theorem}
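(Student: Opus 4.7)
The plan is to show that every $\PNF$ sentence $\phi$ with $\q(\phi)\le 4$ has finite spectrum, which gives the claimed lower bound $q\ge 5$. Since a PNF sentence of depth $q$ has $\ch(\phi)\le q-1$, both the cases $\q(\phi)\le 3$ and $\q(\phi)=4$ with $\ch(\phi)\le 2$ follow at once from Theorem~\ref{minimal_changes}. The only remaining case is a depth-$4$ PNF sentence with $\ch(\phi)=3$, whose prefix is either $\exists\forall\exists\forall$ or $\forall\exists\forall\exists$. Replacing $\phi$ by $\neg\phi$ if necessary (the spectrum is unchanged) and applying Lemma~\ref{NEPNF}, we may assume
$$
 \phi \;=\; \exists y_1\,\forall x_1\,\exists y_2\,\forall x_2\;\varphi(y_1,x_1,y_2,x_2)
$$
is in $\NE\PNF$ with NE-basis $\varphi$ a Boolean combination of adjacency atoms only, so $\varphi$ is determined by a set $\mathcal{T}$ of ordered labelled $4$-vertex graphs. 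The results recalled in Section~\ref{laws} give $S(\phi)\cap(0,1/2)=\varnothing$ and the finiteness of $S(\phi)\cap(1,\infty)$, so the task reduces to showing that $S(\phi)\cap[1/2,1]$ is finite.

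The key observation is that, given the ordered type $T$ of the triple $(y_1,x_1,y_2)$, the inner formula $\forall x_2\,\varphi(y_1,x_1,y_2,x_2)$ reduces to the statement that every vertex $x_2$ distinct from the three has its adjacency vector to $(y_1,x_1,y_2)$ in a prescribed set $B(T)\subseteq\{0,1\}^3$. Thus $\Gamma\models\phi$ iff there exists $y_1\in V(\Gamma)$ such that for every $x_1\neq y_1$ some $y_2$ realizes a triple type $T$ for which no vertex realizes any ``forbidden'' adjacency vector $v\in\{0,1\}^3\setminus B(T)$. Each such forbidden $v$ encodes a one-vertex extension of $(y_1,x_1,y_2)$ by $x_2$, so the problem of choosing $y_2$ while ruling out these extensions is exactly the setting of Theorem~\ref{double-extension} with $H$ the $2$-vertex subgraph on $(y_1,x_1)$, $G$ the $3$-vertex triple on $(y_1,x_1,y_2)$, and $\mathcal{W}$ the finite set of $4$-vertex graphs encoding the forbidden extensions.

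Let $\mathcal{A}$ be the finite set of reciprocals $1/\rho(H),\,1/\rho(G,H),\,1/\rho(W,G)$ taken over all graphs $H\subset G\subset W$ on at most $4$ vertices. For any $\alpha\in[1/2,1]\setminus\mathcal{A}$, Theorems~\ref{existence_and_extension} and~\ref{double-extension} yield that $G(n,n^{-\alpha})$ a.a.s.\ has a canonical structure depending only on $\alpha$: each graph $H$ on $\le 4$ vertices is present or absent according to whether $\rho(H)<1/\alpha$ or $\rho(H)>1/\alpha$; every $(G,H)$-extension with $\rho(G,H)<1/\alpha$ holds; and every $(\mathcal{W},G,H)$-double-extension with $\rho(G,H)<1/\alpha<\min_{W\in\mathcal{W}}\rho(W,G)$ holds. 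Enumerating the finitely many ordered types of $(y_1,x_1)$ and of candidate triples $(y_1,x_1,y_2)$, the truth of $\phi$ in $G(n,n^{-\alpha})$ becomes a finite combinatorial predicate in $\alpha$ alone, so the probability of $G(n,n^{-\alpha})\models\phi$ tends to $0$ or to $1$, and hence $\alpha\notin S(\phi)$. The main obstacle, and what deserves careful bookkeeping, is the uniform choice of $y_2$: for a given pair-type of $(y_1,x_1)$ one must select a single triple-type of $y_2$ that works for every $x_1$ of that type, and then apply Theorem~\ref{double-extension} to rule out every forbidden one-vertex extension simultaneously; this relies on the sharp threshold dichotomy $\rho(G,H)<1/\alpha<\min_{W\in\mathcal{W}}\rho(W,G)$, which holds precisely when $\alpha\notin\mathcal{A}$.
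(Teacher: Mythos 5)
Your overall reduction is the same as the paper's: using Theorem~\ref{minimal_changes} for all prefixes with $\ch(\phi)\le 2$, the zero-one $4$-law for $\alpha<1/2$ and the finiteness of $S(\phi)\cap(1,\infty)$, the only case left is a depth-$4$ $\PNF$ sentence with $\ch(\phi)=3$, i.e.\ prefix $\exists\forall\exists\forall$ after possibly negating. The gap is in how you dispose of this case. Your central claim --- that for every $\alpha\in[1/2,1]$ outside the finite set $\mathcal{A}$ of thresholds $1/\rho(H)$, $1/\rho(G,H)$, $1/\rho(W,G)$ coming from graphs on at most $4$ vertices, the truth of $\phi$ in $G(n,n^{-\alpha})$ ``becomes a finite combinatorial predicate in $\alpha$ alone'' determined by Theorems~\ref{existence_and_extension} and~\ref{double-extension} --- is exactly the statement that needs proof, and those theorems cannot deliver it: they decide properties with at most two quantifier alternations ($\forall\ldots\exists\ldots\forall$), while your sentence has three. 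The obstruction is the outer $\exists y_1\forall x_1$. The $\forall x_1$ ranges also over non-generic vertices sitting inside dense (rigid) configurations --- pairs $(y_1,x_1)$ whose common neighbourhood contains an edge, vertices lying in a $K_4$, or in chains of rigid one-vertex extensions --- and whether a suitable $y_2$ exists for such an $x_1$ (and whether some single $y_1$ avoids all problematic $x_1$) depends on configurations on far more than $4$ vertices. Likewise, when a forbidden one-vertex extension over the triple is $\alpha$-safe, a generic $y_2$ fails but an exceptional $y_2$ inside a dense spot may still satisfy the inner $\forall x_2$, and Theorems~\ref{existence_and_extension},~\ref{double-extension} say nothing about the existence of such exceptional witnesses. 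So the exceptional set $\mathcal{A}$ you remove is far too small, and the zero-one dichotomy you assert for the remaining $\alpha$ is unsubstantiated.

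That this cannot be a routine bookkeeping issue is visible in the paper's own treatment of the same case: it goes through the game characterization (Lemma~\ref{Ehren_3}) and Lemma~\ref{strategy}, where the interval $(1/2,10/19)$ requires a bespoke analysis of triangle types, common-neighbour counts and a ``sparse subgraph'' vertex for Duplicator's first move, while on $(10/19,1)$ one must discard every fraction $a/b$ with $a\le 20$ and control rigid chains and closures of configurations on up to roughly $21$ vertices via generic extensions. In other words, the ``careful bookkeeping'' you defer (the uniform choice of $y_2$, simultaneously for all $x_1$ including the exceptional ones) is the entire content of the proof, and carrying it out forces an exceptional set of $\alpha$ with much larger numerators than those arising from $4$-vertex graphs. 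As written, your argument proves the easy part (the reduction and the regimes $\alpha<1/2$, $\alpha>1$) but not the remaining case, so the lower bound $q\ge 5$ does not follow.
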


The proof is based on the statement on Ehrenfeucht game which is given below. For a positive integer $k$, consider a set $\tilde S(k)$ of $\alpha>0$ such that there exist $\varepsilon>0$ and increasing sequences $n_i,m_i$ of positive integers such that, for any $i\in\mathbb{N}$,
$$
{\sf P}\left(\text{Spoiler has a winning strategy in EHR}\left(G(n_i,n_i^{-\alpha}),
G(m_i,m_i^{-\alpha}),k,k-1\right)\right)>\varepsilon^2.
$$

\begin{lemma}
The set $\tilde S(4)\cap(1/2,1)$ is finite.
\label{strategy}
\end{lemma}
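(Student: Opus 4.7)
\emph{Proof plan.} The plan is to translate the game-theoretic hypothesis into a finiteness statement about spectra of fully-alternating depth-4 PNF sentences, and then apply the extension and double-extension theorems of Section~2.

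By Lemma~\ref{Ehren_3} and the classical fact that there are only finitely many $\cong$-equivalence classes of depth-4 $\mathcal{F}$-sentences in which every length-4 path of $F(\phi)$ from a root has exactly 3 alternations, a pigeonhole over this finite list, combined with independence of $G(n_i, n_i^{-\alpha})$ and $G(m_i, m_i^{-\alpha})$, shows that if $\alpha \in \tilde S(4)$ then $\alpha \in S(\phi)$ for at least one such $\phi$. Applying Lemma~\ref{same_changes_PNF} to push the quantifiers to the front while preserving the number of alternations, and using the symmetry $S(\phi) = S(\neg\phi)$ to fix the sign of the outer quantifier, the task reduces to proving that $S(\phi) \cap (1/2, 1)$ is finite for every PNF sentence
\[
\phi = \exists x_1 \, \forall x_2 \, \exists x_3 \, \forall x_4 \, \psi(x_1, x_2, x_3, x_4), \qquad \psi \text{ quantifier-free.}
\]

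Now write $\phi = \exists x_1 \, P(x_1)$ with $P(x_1) = \forall x_2 \, \exists x_3 \, \forall x_4 \, \psi$. Splitting the $\forall x_2$ over the three isomorphism types of the pair $(x_1, x_2)$ presents $P(x_1)$ as a conjunction of double-extension-type properties of the graph rooted at $x_1$, fitting the framework of Theorem~\ref{double-extension} with parameter $s = 1$: the base $A$ consists of the single vertex $x_1$, each $B \supset A$ on $\{x_1, x_3\}$ encodes a permissible type of $x_3$, and the families $\mathcal{W}$ encode the forbidden configurations for $x_4$. Mimicking the construction in Section~4 of the paper, let $\Theta \subset (1/2, 1)$ be the finite set of critical exponents $1/\rho(H)$, $1/\rho(G, H)$, $1/\rho(W, G)$ for all graphs $H \subset G \subset W$ on a bounded number of vertices determined by the signature of $\phi$. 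For $\alpha \in (1/2, 1) \setminus \Theta$, Theorems~\ref{existence_and_extension} and~\ref{double-extension} imply that a.a.s.\ $G(n, n^{-\alpha})$ simultaneously satisfies the relevant existence, extension, and double-extension properties, and hence the truth value of $P(x_1)$ depends only on the isomorphism type of a bounded neighbourhood of $x_1$.

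The main obstacle is the outermost existential quantifier $\exists x_1$: in contrast to the $k = 2$ case of Theorem~\ref{minimal_changes}, where fixing the universal tuple is straightforward and all ``bad'' extensions can then be ruled out via the extension theorems, here one must additionally demonstrate that some vertex $x_1$ of the ``right'' type actually exists. I would handle this by adapting the dense-neighbourhood analysis of Sections~4.3--4.4 to a single-vertex root: for $\alpha \in (1/2, 1) \setminus \Theta$, one shows that a.a.s.\ every isomorphism type of a bounded neighbourhood compatible with properties G1--G3 is realised by some vertex of $G(n, n^{-\alpha})$. Consequently $\exists x_1 \, P(x_1)$ has the same a.a.s.\ truth value in $G(n, n^{-\alpha})$ and $G(m, m^{-\alpha})$, contradicting $\alpha \in S(\phi)$ and giving $S(\phi) \cap (1/2, 1) \subseteq \Theta$.
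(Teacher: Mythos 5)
There is a genuine gap at the reduction step. Lemma~\ref{same_changes_PNF} (and Lemma~\ref{NEPNF}) preserves the number of quantifier alternations but \emph{not} the quantifier depth: pulling quantifiers out of a Boolean combination merges blocks and in general increases the depth well beyond $4$ (the paper's own example turns a depth-$5$ sentence into the depth-$8$ PNF sentence~(\ref{PNF_example})). The sentence supplied by Lemma~\ref{Ehren_3} for a pair $(G,H)$ is a depth-$4$ Boolean combination whose nesting tree has $3$ alternations along every full root path; after prenexing, all you can retain is a prefix of the shape $\exists\ldots\exists\forall\ldots\forall\exists\ldots\exists\forall\ldots\forall$ with blocks of unbounded size, not the four-variable prefix $\exists x_1\forall x_2\exists x_3\forall x_4$ that your proposal reduces to. This is fatal for the plan: the class of $3$-alternation PNF sentences of unbounded depth contains sentences with \emph{infinite} spectrum --- the sentence~(\ref{PNF_example}), with prefix $\exists^4\forall^2\exists\forall$, is exactly such a sentence (Theorem~\ref{4_or_5}) --- so finiteness cannot be recovered from the alternation count alone. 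The whole content of the lemma is the interplay between the depth bound $4$ and the exact-alternation constraint, i.e.\ the fact that in $\EHR(G,H,4,3)$ Spoiler is forced to switch graphs in every round; your translation back to logic discards precisely this, whereas the paper never leaves the game: it fixes the ranges $(1/2,10/19)$ and $(10/19,1)$ and exhibits explicit Duplicator strategies for the forced alternating play, built on a.a.s.\ properties (triangle property, sparse extension and sparse subgraph properties, $\alpha$-rigid chains and generic extensions) obtained from Theorems~\ref{existence_and_extension} and~\ref{double-extension}.

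Even setting the reduction aside, the final step of your plan is asserted rather than proved: the claim that for all $\alpha$ outside a single finite set $\Theta$ the truth value of $P(x_1)$ depends only on a bounded neighbourhood of $x_1$, and that every admissible neighbourhood type is a.a.s.\ realised, is exactly where the difficulty lies. The relevant bounded sizes are not uniform as $\alpha\downarrow 1/2$ (rigid extensions become cheaper and the closures grow), which is why the paper needs the separate, more hands-on strategy on $(1/2,10/19)$ and the $a/b$ with $a\le 20$ exclusion on $(10/19,1)$; a single finite exceptional set extracted from ``graphs on a bounded number of vertices determined by the signature of $\phi$'' does not come for free. As written, the proposal therefore reduces the lemma either to an unjustified statement (the four-variable case) or to a false one (the general $\exists^*\forall^*\exists^*\forall^*$ case), and the concluding realisability claim would still need the substance of the paper's strategy construction.
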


{\it Proof}. {\bf Case 1}. Let $p=n^{-\alpha}$, $\alpha\in(1/2,10/19)$.\\

Let $x_1,x_2,x_3$ be vertices of an arbitrary graph $G$. For any $i,j\in\{\{0\},\mathbb{N}\}$, we say that $(x_1,x_2,x_3)$ has the type $(i,j)$, if a number of common neighbors of $x_1,x_3$ (which are not adjacent to $x_2$) is in $i$, and a number of common neighbors of $x_2,x_3$ (which are not adjacent to $x_1$) is in $j$. Introduce a linear order $\leq$ on the set $\mathcal{I}$ of all pairs of elements from $\{\{0\},\mathbb{N}\}$ in the following way: $(\{0\},\{0\})\leq(\{0\},\mathbb{N})\leq(\mathbb{N},\{0\})\leq(\mathbb{N},\mathbb{N})$.

For any vertices $x_1,x_2$, denote by $n(x_1,x_2)$ the number of all pairs of adjacent common neighbors of $x_1,x_2$. Denote the set of all common neighbors of $x_1,x_2$ by $N(x_1,x_2)$. Denote the set of all common neighbors $x_3$ of $x_1,x_2$ such that $x_1,x_2,x_3$ have no common neighbors by $U(x_1,x_2)$.\\

We say that a graph has the {\it triangle property}, if, for any $s\in\{0,1,2\}$, any vertex $x_1$, any $x,y\in\mathcal{I}$ and any $\delta\in\{\sim,\nsim\}$, there is a vertex $x_2$ in the graph such that
\begin{itemize}
\item $x_1\delta x_2$,
\item $n(x_1,x_2)\leq 1$,
\item there is no $K_4$ containing $x_1,x_2$,
\item if $n(x_1,x_2)=1$, then $|U(x_1,x_2)|=\min\{s,1\}$,
\item if $n(x_1,x_2)=0$, then $|U(x_1,x_2)|=s$,
\item for any $x_3\in U(x_1,x_2)$, $(x_1,x_2,x_3)$ has the type $x$,
\item for any $x_3\in N(x_1,x_2)\setminus U(x_1,x_2)$, $(x_1,x_2,x_3)$ has the type $y$.
\end{itemize}
By Theorem~\ref{double-extension}, a.a.s. $G(n,p)$ has the triangle property. Moreover, by Theorem~\ref{double-extension}, a.a.s. $G(n,p)$ has the {\it sparse extension property}, which is described below. For any $m\geq 1$ and any distinct vertices $v_1,\ldots,v_m$, there are vertices $z_1,z_2$ such that
\begin{itemize}
\item $z_1$ is adjacent to $v_1$ and not adjacent to any of $v_2,\ldots,v_m$, $z_2$ is not adjacent to any of $v_1,\ldots,v_m$,

\item for any $i\in\{1,\ldots,m\}$, $s\in\{1,2\}$, $z_s\neq v_i$ and $z_s$ has no common neighbors with $v_i$.
\end{itemize}
Finally, by Theorem~\ref{double-extension}, a.a.s., in $G(n,p)$, there exists a vertex $x_1$ such that
\begin{itemize}
\item there is no $K_4$ containing $x_1$,
\item for any vertex $x_2$, $n(x_1,x_2)\leq 1$
\end{itemize}
(in such a case, we say that a graph has the {\it sparse subgraph property}).\\

Let $G,H$ be graphs with the triangle property, the sparse extension property and the sparse subgraph property. Let us describe a winning strategy of Duplicator in $\EHR(G,H,4,3)$.

{\bf In the first round}, Spoiler chooses, say, an arbitrary vertex $v_1\in V(G)$. Duplicator chooses an arbitrary vertex $u_1\in V(H)$ such that there is no $K_4$ in $H$ containing $u_1$ and, for any vertex $u_2$, $n(u_1,u_2)\leq 1$. Such a vertex exists because $H$ has the sparse subgraph property.

{\bf In the second round}, Spoiler chooses a vertex $u_2\in V(H)$. If the set $N(u_1,u_2)\setminus U(u_1,u_2)$ is non-empty, then denote by $y\in\mathcal{I}$ the least element of the set of types of $(u_1,u_2,u_3)$ over all $u_3\in N(u_1,u_2)\setminus U(u_1,u_2)$. If the set $U(u_1,u_2)$ is non-empty, then denote by $x\in\mathcal{I}$ the least element of the set of types of $(u_1,u_2,u_3)$ over all $u_3\in U(u_1,u_2)$.

Consider two cases.

\begin{enumerate}

\item $u_1\sim u_2$. Duplicator chooses $v_2\in V(G)$ such that
\begin{itemize}
\item $v_1\sim v_2$,
\item there is no $K_4$ containing $v_1,v_2$ in $G$,
\item for any $s\in\{0,1\}$, $v_1,v_2$ have exactly $s$ common neighbors if and only if $u_1,u_2$ have exactly $s$ common neighbors,
\item if $u_1,u_2$ have 2 common neighbors, then $v_1,v_2$ have  exactly 2 common neighbors,
\item if $N(u_1,u_2)\neq\varnothing$, then the types of $(v_1,v_2,v_3)$ equal $x$ for all common neighbors $v_3$ of $v_1,v_2$.
\end{itemize}
\item $u_1\nsim u_2$. Duplicator chooses $v_2\in V(G)$ such that
\begin{itemize}
\item $v_1\nsim v_2$,
\item $n(v_1,v_2)=n(u_1,u_2)$,
\item if $n(u_1,u_2)=1$, then the types of $(v_1,v_2,v_3^1)$, $(v_1,v_2,v_3^2)$ equal $y$, where $v_3^1\sim v_3^2$ are common neighbors of $v_1,v_2$,
\item if $n(u_1,u_2)=1$ and $U(u_1,u_2)\neq\varnothing$, then $U(v_1,v_2)=\{v_3\}$ and the type of $(v_1,v_2,v_3)$ equals $x$,
\item if $n(u_1,u_2)=0$ and $|U(u_1,u_2)|\geq 2$, then $U(v_1,v_2)=\{v_3^1,v_3^2\}$ and the types of $(v_1,v_2,v_3^1),$ $(v_1,v_2,v_3^2)$ equal $x$,
\item if $n(u_1,u_2)=0$ and $|U(u_1,u_2)|=1$, then $U(v_1,v_2)=\{v_3\}$ and the type of $(v_1,v_2,v_3)$ equals $x$,
\item if $N(u_1,u_2)=\varnothing$, then $N(v_1,v_2)=\varnothing$.
\end{itemize}
\end{enumerate}
Such a vertex exists because 1) after the first round, there is no $K_4$ containing $u_1$ and $n(u_1,u_2)\leq 1$ for all $u_2$; 2) $G$ has the triangle property.

{\bf In the third round}, Spoiler chooses a vertex $v_3\in V(G)$. If $v_3\sim v_1,v_3\sim v_2$, then Duplicator chooses a vertex $u_3\in V(H)$ such that
\begin{itemize}
\item if $v_3\in U(v_1,v_2)$, then $u_3\in U(u_1,u_2)$ and $(u_1,u_2,u_3)$ has the type $x$,
\item if $v_3\in N(v_1,v_2)\setminus U(v_1,v_2)$, then $u_3\in N(u_1,u_2)\setminus U(u_1,u_2)$ and $(u_1,u_2,u_3)$ has the type $y$.
\end{itemize}
Otherwise, Duplicator chooses a vertex $u_3\in V(H)$ such that
\begin{itemize}
\item $v_1\sim v_3$ if and only if $u_1\sim u_3$,
\item $v_2\sim v_3$ if and only if $u_2\sim u_3$,
\item for any $j\in\{1,2\}$, the vertices $u_j,u_3$ have no common vertices.
\end{itemize}
Such a vertex exists because $H$ has the sparse extension property.

{\bf In the fourth round}, Spoiler chooses a vertex $u_4\in V(H)$.

Obviously, if $u_4$ is a common neighbor of $u_1,u_2,u_3$, then $u_1\nsim u_2$ and $u_3\in N(u_1,u_2)\setminus U(u_1,u_2)$. Therefore, $v_3\in N(v_1,v_2)\setminus U(v_1,v_2)$. So, there exists a common neighbor $v_4\in V(G)$ of $v_1,v_2,v_3$.

Assume that $u_4$ is not a common neighbor of $u_1,u_2,u_3$. If $u_4\in N(u_1,u_2)$, then $v_1,v_2$ have at least $1$ common neighbor. So, if $u_3\notin N(u_1,u_2)$, there is $v_4\in N(v_1,v_2)$ such that $v_4\neq v_3$. If $u_3\in N(u_1,u_2$, then $v_1,v_2$ have at least $2$ common neighbors, and so there is $v_4\in N(v_1,v_2)$ such that $v_4\neq v_3$ as well. If $u_4\in N(u_1,u_3)$ (or $u_4\in N(u_2,u_3)$), then $u_3\in N(u_1,u_2)$ and $(u_1,u_2,u_3)$, $(v_1,v_2,v_3)$ has the same type. Therefore, there exists a vertex $v_4$ such that $v_4\in N(v_1,v_3)$ (or $v_4\in N(v_2,v_3)$).

In all the above cases, Duplicator chooses $v_4$.

Finally, if $u_4$ is adjacent to at most one vertex of $u_1,u_2,u_3$, then Duplicator has a winning strategy because $G$ has the sparse extension property.\\

{\bf Case 2} Let $p=n^{-\alpha}$, $\alpha\in(10/19,1)$ be rational and not equal to any fraction $a/b$ with $a\leq 20$. Note that there is only a finite number of forbidden fractions $a/b$. Moreover, let $q$ be the denominator of $\alpha$.

Let $H_2\subset H_1$, $V(H_2)=\{a_1,\ldots,a_m\}$, $V(H_1)\setminus V(H_2)=\{b_1,\ldots,b_{\ell}\}$. We say that a graph $G$ has {\it a $t$-generic $(H_1,H_2)$-extension property} if for any its distinct vertices $\x=(x_1,\ldots,x_{m})$ there exist distinct vertices $\y=(y_1,\ldots,y_{\ell})$ such that
\begin{itemize}
\item $\forall i,j\in\{1,\ldots,\ell\}$ $(y_i\sim y_j)\Leftrightarrow(b_i\sim b_j)$,
\item $\forall i\in\{1,\ldots,m\},j\in\{1,\ldots,\ell\}$ $(x_i\sim y_j)\Leftrightarrow(a_i\sim b_j)$,
\item if for some $\z=(z_1,\ldots,z_{s})$ with $s \leq t$ the pair $(G_{\x\sqcup\y\sqcup\z},G_{\x\sqcup\y})$ is $\alpha$-rigid, then there are no edges between the $\y$'s and the $\z$'s.
\end{itemize}

Is the above conditions are satisfied for some $\y$, we say that the pair $(G|_{\x\sqcup\y}, G|_{\x})$ is {\it a $t$-generic $(H_1,H_2)$-extension}.

Let $\mathcal{S}$ be a set of all graphs $G$, that satisfy the following properties.

\begin{itemize}
\item[(1)] There exists a vertex $x$ in $G$ such that there are no subgraphs $W\supset X$ with $x\in V(W)$, $\rho(W,(\{x\},\varnothing))>1/\alpha$ and $v(W)\leq 21$.
\item[(2)] For any graphs $H_2\subset H_1$ with $\rho(H_1,H_2)<1/\alpha$, $v(H_1)\leq 22$, $G$ has the 1-generic $(H_1,H_2)$-extension property.
\end{itemize}

By Theorems~\ref{existence_and_extension} and~\ref{double-extension}, $\lim_{n\rightarrow\infty}{\sf P}(G(n,p)\in\mathcal{S})=1$, and it is sufficient to describe a Duplicator's winning strategy in $\EHR(G,H,4,3)$ for $G,H\in\mathcal{S}$.\\

{\bf In the first round}, Spoiler chooses, say, a vertex $v_1\in V(G)$. 
By the property (1), there is a vertex $u_1\in V(H)$ such that in $H$ there is no subgraph $W$ with $u_1\in V(W)$, $v(W)\leq 21$, $\rho(W,(\{u_1\},\varnothing))>1/\alpha$.


{\bf In the second round}, Spoiler chooses a vertex $u_2\in V(H)$. Consider a maximal sequence of graphs $H|_{\{u_1,u_2\}}=H_0\subset H_1\subset\ldots\subset H_L\subset H$ with each $v(H_{i})-v(H_{i-1})=1$ and $\rho(H_i,H_{i-1})>1/\alpha$. Note that $L\leq 19$. Indeed, if $L>19$, then
$$
 \rho(H_{20},(\{u_1\},\varnothing))\geq\frac{40}{21}>\frac{19}{10}>1/\alpha,
$$
that is impossible by the choice of $u_1$.

By the choice of $u_1$, we have $\rho(H_L,(\{u_1\},\varnothing))\leq 1/\alpha$. Moreover, $\alpha$ is not equal to any fraction $a/b$ with $a\leq 20$, hence, the inequality is strict: $\rho(H_{L},(\{u_1\},\varnothing))<1/\alpha$. Set $Y=H_L$. By (2), there exists a subgraph $X$ in $G$ such that $Y\cong X$, there is an isomorphism $f:Y\rightarrow X$ such that $f(u_1)=v_1$, and there is no subgraph $W\subset G$ such that $X\subset W$, $v(W)=v(X)+1$, $\rho(W,X)>1/\alpha$.

Duplicator chooses $v_2=f(u_2)$.


{\bf In the third round}, Spoiler chooses a vertex $v_3\in V(G)$. Consider two cases.

If $v_3\in V(X)$, then Duplicator chooses $u_3=f^{-1}(v_3)$. If after that Spoiler chooses a vertex $u_4\in V(Y)$, then Duplicator chooses $v_4=f(u_4)$, and she wins. If Spoiler chooses $u_4\notin V(Y)$, then $\rho(H|_{\{u_1,u_2,u_3,u_4\}},H|_{\{u_1,u_2,u_3\}})<1/\alpha$. So, the property (2) implies the existence of $v_4\in V(G)$ such that $v_4\sim v_i$ if and only if $u_4\sim u_i$ for all $i\in\{1,2,3\}$. Duplicator chooses such a $v_4$ and wins.

If $v_3\notin V(X)$, then $\rho(G|_{V(X)\cup\{v_3\}},X)<1/\alpha$. So, by (2) there exists a vertex $u_3$ in $H$ such that there exists an isomorphism $\tilde f:G|_{V(X)\cup\{v_3\}}\rightarrow H|_{V(Y)\cup\{u_3\}}$, $\tilde f(v_i)=u_i$ for $i\in\{1,2,3\}$. Moreover, there is no subgraph $W\subset H$ such that $H|_{V(Y)\cup\{u_3\}}\subset W$, $v(W)=v(Y)+1$, $\rho(W,Y)>1/\alpha$. Duplicator chooses that $u_3$. Denote $\tilde X=G|_{V(X)\cup\{v_3\}}$, $\tilde Y=H|_{V(Y)\cup\{u_3\}}$. If in the fourth round Spoiler chooses a vertex $u_4\in \tilde Y$, then Duplicator chooses $\tilde f^{-1}(u_4)$ and wins. If Spoiler chooses $u_4\notin \tilde Y$, then $(H|_{\{u_1,u_2,u_3,u_4\}},H|_{\{u_1,u_2,u_3\}})<1/\alpha$. In this case Duplicator's winning strategy is the same as in the first case. $\Box$\\

{\it Proof of Theorem~\ref{PNF_spectra}}. From Theorem~\ref{minimal_changes}, it follows that it is enough to prove that $|S(\phi)|<\infty$ if $\phi$ is in PNF, $\q(\phi)=4$, $\ch(\phi)=3$.

Let $\phi\in\mathcal{F}$ be a PNF sentence such that $\q(\phi)=4$, $\ch(\phi)=3$. Let $\alpha\in S(\phi)$. Obviously, there exist $\varepsilon>0$ and sequences $n_i,m_i$ such that, for any $i\in\mathbb{N}$,
$$
\min\left\{{\sf P}\left(G(n_i,n_i^{-\alpha})\models\phi\right),
{\sf P}\left(G(m_i,m_i^{-\alpha})\models\neg(\phi)\right)\right\}>\varepsilon.
$$
By Lemma~\ref{Ehren_3},
$$
{\sf P}\left(\text{Spoiler has a winning strategy in EHR}\left(G(n_i,n_i^{-\alpha}),
G(m_i,m_i^{-\alpha}),4,3\right)\right)>\varepsilon^2.
$$
Therefore, $\alpha\in\tilde S(\phi)$. By Lemma~\ref{strategy}, $|\tilde S(4)\cap(1/2,1)|<\infty$. Moreover, the random graph $G(n,n^{-\alpha})$ obeys zero-one 4-law if $\alpha<1/2$, and the set $S(\phi)\cap(1,\infty)$ is finite (see Section~\ref{laws}). Therefore, $S(\phi)=S(\phi)\cap[1/2,\infty)$ is finite. $\Box$\\

Note that as the formula~(\ref{PNF_example}) with an infinite spectrum is in PNF, Theorem~\ref{PNF_spectra} implies that a minimal quantifier depth of a PNF sentence with an infinite spectrum is in $\{5,6,7,8\}$.

Finally, it is easy to see that Lemma~\ref{strategy} and Theorem~\ref{minimal_changes} have a more general corollary which is given below.

\begin{theorem}
Let $\phi\in\mathcal{F}$, $\q(\phi)=4$. If either all paths of $F(\phi)$ starting in a root have $3$ labels alternations, or all paths of $F(\phi)$ starting in a root have at most $2$ labels alternations, then $|S(\phi)|<\infty$.
\label{4_changes}
\end{theorem}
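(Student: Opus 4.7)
\textit{Proof plan.} I split the statement along the two disjoint hypotheses, treating them separately.

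\emph{Case 1: every root path of $F(\phi)$ has at most $2$ label alternations.} This is $\ch(\phi)\leq 2$ by definition, so nothing new is needed: the argument proving Theorem~\ref{minimal_changes} (with Lemma~\ref{reduce_to_one_root} first reducing to an NEPNF sentence of the same $\ch$) already handles $k\in\{0,1,2\}$ and yields $|S(\phi)|<\infty$.

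\emph{Case 2: every path of $F(\phi)$ on $q=4$ vertices starting at a root has exactly $3$ alternations.} I follow the strategy used in the proof of Theorem~\ref{PNF_spectra}. Fix any $\alpha\in S(\phi)$ and extract $\varepsilon>0$ and sequences $n_i,m_i$ with
$$
\min\left\{{\sf P}\bigl(G(n_i,n_i^{-\alpha})\models\phi\bigr),\,
{\sf P}\bigl(G(m_i,m_i^{-\alpha})\models\neg(\phi)\bigr)\right\}>\varepsilon.
$$
The hypothesis of Case~2 is precisely the condition appearing in part~2) of Lemma~\ref{Ehren_3} with parameters $(q,k)=(4,3)$, so that lemma yields
$$
{\sf P}\bigl(\text{Spoiler wins }\EHR(G(n_i,n_i^{-\alpha}),G(m_i,m_i^{-\alpha}),4,3)\bigr)>\varepsilon^2,
$$
i.e.\ $\alpha\in\tilde S(4)$. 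I now assemble the global bound on $S(\phi)\subset(0,\infty)$: by the depth bound of~\cite{Zhuk0} recalled in Section~\ref{laws}, $S(\phi)\cap(0,1/2)=\varnothing$; by Lemma~\ref{strategy}, $S(\phi)\cap(1/2,1)\subseteq \tilde S(4)\cap(1/2,1)$ is finite; by~\cite{ZhukOstr}, $S(\phi)\cap(1,\infty)$ is finite. Adding at most the two boundary points $\alpha=1/2$ and $\alpha=1$, the whole spectrum is finite.

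The entire substantive obstacle was already resolved inside Lemma~\ref{strategy}; what remains for Theorem~\ref{4_changes} is a packaging step. The only genuine point of care is matching the wording of the case hypothesis against Lemma~\ref{Ehren_3}, whose statement requires exactly $k$ alternations on every path of $F(\phi)$ on $q$ vertices starting in a root, not merely $\ch(\phi)=k$; since $\q(\phi)=4$, the root paths on $4$ vertices are maximal, and the Case~2 hypothesis supplies this uniformly. No new combinatorial argument on the random graph is needed beyond what was developed in Sections~\ref{S_spectra} and~\ref{depth}.
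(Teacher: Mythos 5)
Your proposal is correct and follows essentially the same route as the paper, which presents Theorem~\ref{4_changes} as an immediate corollary of Theorem~\ref{minimal_changes} (covering the $\ch(\phi)\leq 2$ case) and Lemma~\ref{strategy} via Lemma~\ref{Ehren_3} (covering the all-paths-with-$3$-alternations case), exactly as in the proof of Theorem~\ref{PNF_spectra}. Your explicit check that the Case~2 hypothesis matches the exact-alternation condition of Lemma~\ref{Ehren_3}, and the assembly with the depth-$4$ bound below $1/2$ and the finiteness above $1$, is precisely the intended packaging.
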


From Theorem~\ref{4_changes}, we get that if there exists a sentence $\phi=\exists x\,\varphi(x)\in\mathcal{F}$ with $\q(\phi)=4$ and an infinite spectrum, then $F(\phi)$ has both types of paths starting in the root: with maximal number of labels alternations and with less number of labels alternations.


\subsection{There are only two possible limit points}

We do not have an answer on the first question of Section~\ref{laws}, but we bound the set of possible limit points of $S(4)$ by two points.

\begin{theorem}
There are no limit points of $S(4)$, except, possibly $1/2$ and $3/5$.
\label{except_0.5_and_0.6}
\end{theorem}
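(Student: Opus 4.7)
The plan is to reformulate membership in $S(4)$ as a statement about the $4$-round Ehrenfeucht game via Lemma~\ref{Ehren_1} and then to extend the Duplicator strategy from Lemma~\ref{strategy}, which covers only the $3$-alternation subgame, to the unrestricted $4$-round game. First, by the bound $S(\phi)\cap(0,1/(\q(\phi)-2))=\varnothing$ from~\cite{Zhuk0} applied with $\q(\phi)=4$, together with the finiteness of $S(\phi)\cap(1,\infty)$ from~\cite{ZhukOstr} and its uniform behaviour across depth-$4$ sentences, one checks that every limit point of $S(4)=\bigcup_{\q(\phi)=4}S(\phi)$ must lie in $[1/2,1]$. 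So the task reduces to showing that no $\alpha_0\in(1/2,1)\setminus\{3/5\}$ is a limit point.

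Fix such an $\alpha_0$. I would choose $\delta>0$ so that the interval $(\alpha_0-\delta,\alpha_0+\delta)$ avoids every rational of the form $1/\rho(Q)$ or $1/\rho(Q,H)$ for graphs $Q,H$ on at most an explicit constant $M=M(\alpha_0)$ of vertices (other than $\alpha_0$ itself); such a $\delta$ exists because, for each fixed $M$, only finitely many such rationals lie in any bounded region of $(1/2,1)$ that is bounded away from the two accumulation candidates $1/2$ and $3/5$. For $\alpha$ in the punctured neighbourhood $(\alpha_0-\delta,\alpha_0+\delta)\setminus\{\alpha_0\}$, I would then prove that Duplicator a.a.s.\ wins $\EHR(G(n,n^{-\alpha}),G(m,m^{-\alpha}),4)$, which by Lemma~\ref{Ehren_1} implies $\alpha\notin S(4)$.

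The Duplicator strategy would be obtained by enriching the one constructed in Lemma~\ref{strategy}. There, Spoiler was forced to alternate exactly three times, which corresponds to paths in $F(\phi)$ of maximal alternation; here, strategies of Spoiler with $0$, $1$ or $2$ alternations must also be accommodated, and they correspond to existence, extension and double-extension subsentences respectively. I would therefore augment the a.a.s.\ properties of Lemma~\ref{strategy} (triangle, sparse-subgraph, sparse-extension) with the full catalogue of $(H_1,H_2)$-extensions from Theorem~\ref{existence_and_extension} and $(\mathcal{W},G,H)$-double-extensions from Theorem~\ref{double-extension} for graphs on at most $M(\alpha_0)$ vertices. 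By the choice of $\delta$, all the inequalities $\rho<1/\alpha$ and $\rho>1/\alpha$ required by these properties are strict and uniform across the punctured neighbourhood, and a case analysis on how many of Spoiler's moves have landed in the dense neighbourhood of his previously chosen vertices (analogous to Case~2 of the proof of Lemma~\ref{strategy}) yields the required Duplicator responses.

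The hard part will be to show that the accumulation points of the set of critical thresholds $1/\rho$ relevant to depth-$4$ sentences are exactly $1/2$ and $3/5$. Among the extension fractions $1/\rho(Q,H)$ with $v(Q)-v(H)\le 3$, only $1/2$ (with $1/\rho=2$, attained by a rich family of $1$-vertex extensions of growing density) and $3/5$ (with $1/\rho=5/3$, witnessed for instance by $K_4$ minus an edge rooted at a single vertex) admit sequences of realizable critical thresholds converging to them from within $(1/2,1)$. The value $3/5$ does not appear in Lemma~\ref{strategy} precisely because the $3$-alternation restriction there rules out the subsentences that encode a $3$-vertex, $5$-edge extension, but it re-enters once $2$-alternation play is allowed. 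Establishing this classification rigorously — analogous in spirit to the extension bookkeeping in the proof of Theorem~\ref{minimal_changes}, but now tracking how the thresholds can cluster inside $(1/2,1)$ — is where most of the technical effort is expected to go.
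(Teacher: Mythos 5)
The overall frame of your proposal (reduce to the $4$-round game via Lemma~\ref{Ehren_1}, then show that Duplicator a.a.s.\ wins for all but finitely many $\alpha$ away from the special points) matches the paper, but the step you yourself identify as the ``hard part'' rests on a misconception, and the genuinely hard ingredient of the paper is absent. You propose to classify accumulation points of the critical thresholds $1/\rho(Q,H)$ with $v(Q)-v(H)\le 3$. Since $\rho(Q,H)$ is then a fraction with denominator at most $3$, the only values of $1/\rho(Q,H)$ lying in $(1/2,1)$ are $3/5$, $2/3$ and $3/4$: this set is finite, it does not accumulate at $1/2$ or at $3/5$, and no ``sequences of realizable critical thresholds converging to them'' exist at this bounded level. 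The actual mechanism by which spectra can accumulate (as in Theorem~\ref{4_or_5}, where the points $\frac12+\frac1{2(m+1)}$ come from unboundedly large witnesses) is that a depth-$4$ sentence can force Duplicator to reproduce unboundedly large substructures built from chains of $\alpha$-rigid extensions. The paper controls this with machinery you do not have a substitute for: $\alpha$-rigid pairs and Lemma~\ref{has_rigid}, the closure $\cl_3(U)$, and above all Lemma~\ref{finite_closure}, which gives an a.a.s.\ bound on $v(\cl_3(U))$ that is uniform over all $\alpha$ in an interval \emph{provided the interval contains no fraction with numerator at most $3$}. The fractions in $[1/2,1)$ with numerator at most $3$ are exactly $1/2,3/5,2/3,3/4$, and this is what singles out these four points as the only candidate limit points (Lemma~\ref{except_4_points}); the uniform bound degenerates as $\alpha$ approaches one of them from above. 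Your choice of $\delta$ avoiding the finitely many thresholds of graphs on at most $M(\alpha_0)$ vertices is trivially possible, but nothing in your argument justifies that a bounded $M(\alpha_0)$ suffices in the first place -- that boundedness is precisely the content of the closure lemma together with the bounded representative subgraph $K(G;x_1,\ldots,x_m;4)$ obtained from the finiteness of the $\sim_4$-equivalence classes, neither of which your enrichment of the strategy from Lemma~\ref{strategy} supplies.

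Even granting the reduction to the four candidates, your proposal has no argument removing $2/3$ and $3/4$: the paper needs a separate, rather delicate Duplicator strategy showing that $S(4)\cap(2/3,1)$ is finite, which exploits the special rigidity structure available when $\alpha>2/3$ (e.g.\ that a rigid one-vertex extension of a triple of chosen vertices must attach to at least two of them, and the bookkeeping with the sets $W(x_1,x_2,x_3)$); your heuristic that ``$3/5$ re-enters once $2$-alternation play is allowed'' does not explain why $2/3$ and $3/4$ drop out. Two smaller points: the reduction to $(1/2,1)$ leaves the point $1$ (as a limit from below) unexamined, whereas the paper's Lemma~\ref{except_4_points} covers $[3/4+\varepsilon,1)$; and Lemma~\ref{strategy} concerns the alternation-restricted game $\EHR(\cdot,\cdot,4,3)$ on specific ranges of $\alpha$, so it cannot simply be ``augmented'' into a strategy for the unrestricted game near an arbitrary $\alpha_0$ without the closure-based analysis above.
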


The scheme of the proof of Theorem~\ref{except_0.5_and_0.6} is the following. First, we introduce some auxiliary constructions, that we exploit in our proof. Second, we restrict the set of possible limit points of $S(4)$ by $\{1/2,3/5,2/3,3/4\}$. Finally, we prove that all limit points of $S(4)$ are less than $2/3$.

Let $H\subset G$ be arbitrary graphs. We say that the pair $(G,H)$ is {\it $\alpha$-safe}, if $\rho(G,H)<1/\alpha$. We say that $(G,H)$ is {\it $\alpha$-rigid}, if $(e(G)-e(S))/(v(G)-v(S))>1/\alpha$ for each $H\subseteq S\subset G$.


\begin{lemma}
\label{has_rigid}
Let $\alpha$ be not equal to any fraction $a/b$ with $a \leq v(G,H)$.
If $(G,H)$ is not $\alpha$-safe then there exists a graph $S$ such that $H \subset S \subseteq G$ and $(S,H)$ is $\alpha$-rigid.
\end{lemma}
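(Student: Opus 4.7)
The plan is to let $S$ be a subgraph of $G$ strictly containing $H$ that maximizes the extension-density $\rho_H(Q) \defeq (e(Q)-e(H))/(v(Q)-v(H))$ over all $Q$ with $H \subsetneq Q \subseteq G$, and then verify directly that this choice of $S$ makes the pair $(S,H)$ $\alpha$-rigid.

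First, because $(G,H)$ is not $\alpha$-safe, this maximum satisfies $\rho_H(S) = \rho(G,H) \geq 1/\alpha$. The arithmetic hypothesis on $\alpha$ promotes this to a strict inequality: an equality $\rho_H(S) = 1/\alpha$ would give $\alpha = (v(S)-v(H))/(e(S)-e(H))$, a fraction whose numerator is at most $v(G)-v(H) = v(G,H)$, which is forbidden. So $\rho_H(S) > 1/\alpha$, and this already handles the case $T = H$ in the $\alpha$-rigidity condition for $(S,H)$.

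It remains to consider $T$ with $H \subsetneq T \subsetneq S$. The maximality of $S$ yields $\rho_H(T) \leq \rho_H(S)$. Writing $a = e(T)-e(H)$, $b = v(T)-v(H)$, $c = e(S)-e(T)$, $d = v(S)-v(T)$ (with $b,d > 0$), this reads $a/b \leq (a+c)/(b+d)$, which is equivalent by cross-multiplication to $a/b \leq c/d$. The mediant inequality then forces $c/d \geq (a+c)/(b+d) = \rho_H(S) > 1/\alpha$, which is precisely the inequality required for $\alpha$-rigidity at the subgraph $T$.

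The only real subtlety is the promotion of the non-strict bound $\rho_H(S) \geq 1/\alpha$ to a strict one, and this is exactly what the condition on the numerator of $\alpha$ is tailored to do; the rest reduces to a one-line mediant calculation.
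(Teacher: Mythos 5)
Your proof is correct. It differs from the paper's in the extremal choice of $S$: the paper takes $S$ to be an \emph{inclusion-minimal} subgraph with $H\subset S\subseteq G$ and $(e(S)-e(H))/(v(S)-v(H))\geq 1/\alpha$ (strict by the arithmetic condition on $\alpha$), and then argues by contradiction that rigidity must hold --- if some $T$ with $H\subset T\subset S$ had $(e(S)-e(T))/(v(S)-v(T))\leq 1/\alpha$, then the complementary part would satisfy $(e(T)-e(H))/(v(T)-v(H))\geq 1/\alpha$, violating minimality. You instead take $S$ to \emph{maximize} the relative density $(e(Q)-e(H))/(v(Q)-v(H))$ over $H\subsetneq Q\subseteq G$ and deduce rigidity directly from the mediant inequality; this is the classical ``a densest extension is rigid'' argument. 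Both proofs rest on the same additive decomposition of the edge and vertex increments and handle the case $T=H$ by the same numerator restriction on $\alpha$ (note the resulting graphs $S$ need not coincide: an inclusion-minimal above-threshold subgraph need not be density-maximizing, and vice versa). The paper's choice gives a slightly shorter contradiction; yours gives a direct verification and the marginally stronger conclusion that every proper intermediate extension has relative density strictly above the maximum density of $(S,H)$ itself, not merely above $1/\alpha$. One cosmetic remark: your identification of the maximum with $\rho(G,H)$ is only needed in the form ``the maximum is at least $1/\alpha$,'' which follows from the failure of $\alpha$-safety regardless of whether $Q=G$ is counted in the definition of $\rho(G,H)$.
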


\begin{proof}
As $(G,H)$ is not $\alpha$-safe, there exists $H \subset S \subseteq G$ with $(e(S)-e(H))/(v(S)-v(H)) \geq 1/\alpha$ (obviously, the case of equality is not possible because of the restriction on $\alpha$). Consider a minimal such $S$, and let us prove that $(S,H)$ is $\alpha$-rigid. Indeed, if there is some $H \subset S' \subset S$, such that $(e(S)-e(S'))/(v(S)-v(S')) \leq 1/\alpha$, then $(e(S')-e(H))/(v(S')-v(H)) \geq 1/\alpha$, and $S$ is not minimal.
\end{proof}

Let $\alpha>0$ be a fixed number, let $G$ be an arbitrary graph, and let $U \subset V(G)$ be an arbitrary set of its vertices. Consider a maximal sequence of induced subgraphs $G|_{U}=G_0\subset G_1\subset\ldots\subset G_L\subset G$, where each $(G_{i},G_{i-1})$ is $\alpha$-rigid, and each $v(G_{i},G_{i-1})\leq t$. Such a sequence is called an {\it $\alpha$-rigid $t$-chain}. Let us show, that the last graph $G_L$ of this sequence is the same for all possible chains. Indeed, if $G_0\subset G_1\subset \ldots \subset G_L$ and $G_0 \subset H_1 \subset \ldots \subset H_K$ are $\alpha$-rigid $t$-chains, then $G_0 \subset G_1 \subset \ldots \subset G_L \subset [G_L \cup H_1] \subset \ldots \subset [G_L \cup H_K]$ is also an $\alpha$-rigid $t$-chain. The graph $G_L$ is called the {\it $t$-closure of $U$}, and is denoted by $\cl_{t}(U)$.


\begin{lemma}
Let $a_1$, $b_1$, $a_2$, $b_2$, $t$, $C$ be fixed positive integers such that numerators of all fractions $a/b \in [a_1/b_1, a_2/b_2)$ are strictly greater than $t$. Then there exists a constant $\widetilde{C}$ such that for any $\alpha \in [a_1/b_1, a_2/b_2)$ a.a.s. $G(n,n^{-\alpha})$ has the following property. For each subset $U \subset V(G(n,n^{-\alpha}))$ with $|U|\leq C$, the inequality $v(\cl_t(U))\leq\widetilde{C}$ holds.
\label{finite_closure}
\end{lemma}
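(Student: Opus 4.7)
The plan is to choose $\widetilde{C}$ explicitly and show that a closure exceeding $\widetilde{C}$ would force the random graph to contain a fixed-size subgraph whose expected count vanishes. Set $\widetilde{C} = C(b_1 t + 1)$, and suppose for contradiction that for some $U \subset V(G(n, n^{-\alpha}))$ with $|U| \leq C$ we have $v(\cl_t(U)) > \widetilde{C}$. Truncating the maximal $\alpha$-rigid $t$-chain $G_0 = G|_U \subset G_1 \subset \ldots \subset G_L = \cl_t(U)$ at the first index $i^*$ with $v(G_{i^*}) > \widetilde{C}$ yields an $\alpha$-rigid $t$-chain with total size $v(G_{i^*}) \in (\widetilde{C}, \widetilde{C} + t]$. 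It suffices to show that a.a.s.\ no such truncated chain appears.

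The key observation is that the numerator hypothesis forces a uniform density excess at each step. For each step $(G_i, G_{i-1})$, the ratio $r_i = (v(G_i) - v(G_{i-1}))/(e(G_i) - e(G_{i-1}))$, written in lowest terms, has numerator dividing $v(G_i) - v(G_{i-1}) \leq t$; by hypothesis $r_i \notin [a_1/b_1, a_2/b_2)$. Rigidity gives $r_i < \alpha$, and since $\alpha < a_2/b_2$, we conclude $r_i < a_1/b_1$, equivalently the integer strict inequality $a_1(e(G_i) - e(G_{i-1})) - b_1(v(G_i) - v(G_{i-1})) \geq 1$. Writing $V = v(G_{i^*}) - v(G_0)$ and $E = e(G_{i^*}) - e(G_0)$, and using $i^* \geq V/t$, summation gives $a_1 E - b_1 V \geq V/t$, hence $E \geq V(b_1 t + 1)/(a_1 t)$. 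Using $\alpha \geq a_1/b_1$,
$$\alpha\, e(G_{i^*}) \geq \alpha E \geq V\bigl(1 + 1/(b_1 t)\bigr).$$
Since $V > \widetilde{C} - C = C b_1 t$, we have $V \geq C b_1 t + 1$, and therefore
$$v(G_{i^*}) - \alpha\, e(G_{i^*}) \leq v(G_0) + V - V\bigl(1 + 1/(b_1 t)\bigr) \leq C - V/(b_1 t) \leq -1/(b_1 t).$$

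Finally, I would apply a first-moment bound. For any fixed graph $H$ on at most $\widetilde{C} + t$ vertices, the expected number of induced copies of $H$ in $G(n, n^{-\alpha})$ is at most $n^{v(H) - \alpha e(H)}$; for $H = G_{i^*}$ this is at most $n^{-1/(b_1 t)} = o(1)$. The number of isomorphism classes of graphs on at most $\widetilde{C} + t$ vertices is a constant $M$ depending only on $C, b_1, t$, so a union bound shows that a.a.s.\ no such $G_{i^*}$ appears, contradicting our assumption. The main obstacle is the uniform rigidity step: verifying that the numerator hypothesis genuinely separates the allowed $\alpha$ from every ratio arising from a $t$-bounded extension, so that the integer sharpening is independent of the particular $\alpha$ in the interval; once this is in hand, the rest is a routine first-moment calculation.
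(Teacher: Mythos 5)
Your proof is correct and takes essentially the same route as the paper's: the numerator hypothesis combined with rigidity forces each step of the chain to have vertex--edge ratio strictly below $a_1/b_1$, so once the chain grows past a bounded size it yields a bounded-size, overly dense subgraph that a.a.s.\ does not appear, and a union bound over the finitely many isomorphism types concludes. Your integer bookkeeping ($a_1 e_i - b_1 v_i \geq 1$ per step) and the truncation at the first index exceeding $\widetilde{C}$ merely make explicit the uniform density gap (the paper's $\varepsilon = a_1/b_1 - K$) and the bounded-size union bound, with a direct first-moment estimate in place of the paper's appeal to Theorem~\ref{existence_and_extension}.
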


\begin{proof}
Denote by $K$ the maximum value among all fractions $a/b$ with $a\leq t$ and $a/b < a_1/b_1$. Set $\varepsilon=a_1/b_1-K$. Let us prove that $\widetilde{C}=C+tC/\varepsilon$ satisfies the statement of Lemma. Fix an arbitrary $\alpha \in [a_1/b_1, a_2/b_2]$ and consider an arbitrary $\alpha$-rigid $t$-chain $G|_U=G_0\subset G_1\subset\ldots\subset G_L$. If $L>C/\varepsilon$ then
$$
\frac{1}{\rho(G_L)}=\frac{C+\sum_{i=1}^{L}v(G_i,G_{i-1})}{\sum_{i=1}^{L}e(G_i,G_{i-1})} \leq \frac{C+K\sum_{i=1}^{L}e(G_i,G_{i-1})}{\sum_{i=1}^{L}e(G_i,G_{i-1})} \leq K+\frac{C}{L} < K+\varepsilon = \frac{a_1}{b_1} < \frac{1}{\alpha}.
$$
By Theorem~\ref{existence_and_extension}, a.a.s. there are no copies of $G_L$ in $G(n,n^{-\alpha})$. As there are finitely many possible $G_L$, a.a.s. $L \leq C/\varepsilon$ for every $U$, and hence $\widetilde{C}=C+tC/\varepsilon \geq v(\cl_t(U))$.
\end{proof}

Consider the modification
$$
\EHR[(G;x_1,\ldots,x_m),(H;y_1,\ldots,y_m),k]
$$
of the game $\EHR(G,H,k)$, in which the first $m$ moves $x_1,\ldots,x_m \in V(G)$, $y_1,\ldots,y_m \in V(H)$ are determined beforehand, i.e. for each $i\in\{1,\ldots,m\}$ in the $i$-th round the vertices $x_i$, $y_i$ are chosen ($k-m$ rounds remain). Define the relation $\sim_k$ on the set of tuples $(G;x_1,\ldots,x_m)$ (where $G$ is an arbitrary graph and $x_1$, \ldots, $x_m$ --- its vertices) in the following way: $(G;x_1,\ldots,x_m) \sim_k (H;y_1,\ldots,y_m)$ if and only if Duplicator has a winning strategy in $\mathrm{EHR}[(G;x_1,\ldots,x_m),(H;y_1,\ldots,y_m),k]$. Note that $\sim_k$ is an equivalence relation, and the number of equivalence classes of $\sim_k$ is finite (see, e.g.,~\cite{Pikhurko}). Consider a rooted tree $T=T(G;x_1,\ldots,x_m)$ whose vertices represents all possible sequences $(x_1,\ldots,x_l)$, where $k\geq l \geq m$, and $x_{l+1},\ldots,x_k$ are arbitrary vertices from $V(G)$. The root of $T$ is $(x_1,\ldots,x_m)$. For every vertex $(x_1,\ldots,x_l)$ of $T$, its children are all of the form $(x_1,\ldots,x_l,x_{l+1})$. Note that each sequence of moves in the considered Ehrenfeucht game corresponds in a natural way to a path in $T$ from the root to one of the leaves. We may assume that players moves a pebble (that is initially in the root) along the edges of $T$, instead of pebbling new vertices in $G$.

Let us make some modifications of the tree, that would not change the result of the game. Let $(x_1,\ldots,x_l)$ be an arbitrary vertex of $T$. We say that two of its descendants $(x_1,\ldots,x_l,x_{l+1})$ and $(x_1,\ldots,x_l,x_{l+1}')$ are equivalent, if
$$
(G;x_1,\ldots,x_l,x_{l+1}) \sim_k (G;x_1,\ldots,x_l,x'_{l+1}).
$$
Note that if we remove one of two equivalent vertices from the tree (together with its subtree), then the result of Ehrenfeucht game would not change. Make all that removals, and denote the modified tree by $\widetilde{T}$. As the quotient set of $\sim_k$ is finite, each vertex of $\widetilde{T}$ has a bounded number of descendants (this bound depends only on $k$ but not on $G$). Hence, $v(\widetilde{T})$ is also bounded by a constant, that does not depend on $G$. Consider the set of all vertices, that are represented in the sequences of $\widetilde{T}$, and denote the subgraph induced on this set of vertices by $K(G;x_1,\ldots,x_m;k)$. Obviously, $v(K(G;x_1,\ldots,x_m;k))$ is also bounded by some constant $C(k)$.

Now we are ready to prove the following lemma.

\begin{lemma}
For any $\varepsilon>0$, the intersection of $S(4)$ with the set
$$
A_{\varepsilon} \defeq [1/2+\varepsilon,3/5) \cup [3/5+\varepsilon,2/3) \cup [2/3+\varepsilon,3/4) \cup [3/4+\varepsilon,1)
$$
is finite.
\label{except_4_points}
\end{lemma}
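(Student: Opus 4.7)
The plan is to invoke Lemma~\ref{Ehren_1} to translate $\alpha \in S(\phi)$ for some $\phi$ with $\q(\phi)=4$ into a positive lower bound on Spoiler's winning probability in $\EHR(G(n_i,n_i^{-\alpha}),G(m_i,m_i^{-\alpha}),4)$ along sequences $n_i,m_i\to\infty$, exactly as in the proof of Theorem~\ref{PNF_spectra}. Thus the lemma reduces to showing that for all but finitely many $\alpha\in A_\varepsilon$, Duplicator a.a.s.\ wins the game $\EHR(G(n,n^{-\alpha}),G(m,n^{-\alpha}),4)$ as $n,m\to\infty$ independently.

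The first step is to discard the finitely many $\alpha=a/b\in A_\varepsilon$ (in lowest terms) whose numerator $a$ is smaller than a threshold $t_0=t_0(\varepsilon)$, to be chosen below; such $\alpha$ form a finite subset of $A_\varepsilon$ because each of the four sub-intervals has positive length and contains only finitely many fractions of any bounded numerator. Choose $t_0$ large enough so that, within each sub-interval of $A_\varepsilon$, every remaining fraction $a/b$ has numerator $a$ exceeding the parameter $t$ from Lemma~\ref{finite_closure} with $t=4$. Then Lemma~\ref{finite_closure} yields a constant $\widetilde C$ such that, for every remaining $\alpha\in A_\varepsilon$, the $4$-closure $\cl_4(U)$ of any set $U$ of at most $4$ vertices in $G(n,n^{-\alpha})$ satisfies $v(\cl_4(U))\le\widetilde C$ a.a.s.

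The second step is a Duplicator strategy. After round $\ell\in\{0,\ldots,4\}$, Duplicator maintains an isomorphism $f_\ell$ between $\cl_4(x_1,\ldots,x_\ell)\subset G$ and $\cl_4(y_1,\ldots,y_\ell)\subset H$ with $f_\ell(x_i)=y_i$, together with the stronger invariant that the rooted labeled subgraphs $K(G;x_1,\ldots,x_\ell;4)$ and $K(H;y_1,\ldots,y_\ell;4)$ (defined in the paragraph preceding the lemma) are isomorphic; this guarantees Duplicator's win in the remaining rounds. Given Spoiler's next choice $x_{\ell+1}\in V(G)$, three cases arise: if $x_{\ell+1}\in\cl_4(x_1,\ldots,x_\ell)$, Duplicator replies with $f_\ell(x_{\ell+1})$; if $x_{\ell+1}$ extends the closure via an $\alpha$-rigid chain of length at most $4-\ell$, Duplicator invokes Theorem~\ref{double-extension} to produce a matching $y_{\ell+1}$ in $H$; if the pair $(G|_{\cl_4(x_1,\ldots,x_\ell)\cup\{x_{\ell+1}\}},G|_{\cl_4(x_1,\ldots,x_\ell)})$ is $\alpha$-safe, Duplicator applies the extension property of Theorem~\ref{existence_and_extension}, much as in Case~2 of the proof of Lemma~\ref{strategy}. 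The matching after Spoiler's first move is possible because the quotient of $\sim_4$ is finite and both $G,H$ a.a.s.\ realize the same collection of vertex $\sim_4$-classes.

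The main obstacle, and the reason for the particular shape of $A_\varepsilon$, lies in the dichotomy between ``rigid'' and ``safe'' extensions, which is controlled by the position of $1/\alpha$ relative to the finite set of fractions $e(W)/v(W)$ achievable by extensions arising within four rounds of play. The four points $\{1/2,\,3/5,\,2/3,\,3/4\}$ are precisely those $\alpha$ for which $1/\alpha\in\{2,\,5/3,\,3/2,\,4/3\}$ is realised by such a fraction. Excising an $\varepsilon$-neighbourhood to the right of each forces a uniform gap between $1/\alpha$ and every critical density, so the safe-versus-rigid dichotomy is unambiguous and Theorem~\ref{double-extension} can be invoked uniformly; Duplicator's strategy then succeeds for all but finitely many $\alpha\in A_\varepsilon$. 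Without such a gap, the boundary case between the two extension types cannot be separated in four rounds, which is exactly what leaves the four excluded points as candidate limit points.
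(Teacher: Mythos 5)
Your overall architecture (bounded closures via Lemma~\ref{finite_closure}, the trees $K(\cdot;\cdot;4)$, a Duplicator strategy that maintains isomorphic closures, generic extensions) matches the paper's, but there are two genuine gaps. First, your choice of the $4$-closure is incompatible with the hypothesis of Lemma~\ref{finite_closure} on the given subintervals. That lemma requires that \emph{every} fraction lying in the interval have numerator greater than $t$; with $t=4$ this fails, since $4/7\in[1/2+\varepsilon,3/5)$ and $4/5\in[3/4+\varepsilon,1)$. Discarding finitely many $\alpha$ of small numerator does not help: the obstruction is that for the infinitely many $\alpha$ accumulating at $4/7$ (resp.\ $4/5$) from the right, $\alpha$-rigid $4$-chains of unbounded length occur a.a.s., so no uniform bound $\widetilde C$ on $v(\cl_4(U))$ exists over the subinterval, and your argument would at best exclude neighbourhoods of $4/7$ and $4/5$ --- i.e.\ it would leave additional limit points, which is weaker than the lemma. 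The paper works with $\cl_3$, and the breakpoints $3/5,2/3,3/4$ (together with $1/2$) are exactly the fractions in $[1/2,1)$ with numerator at most $3$; this is what makes $A_\varepsilon$ the right set for $t=3$, not your heuristic about $1/\alpha\in\{2,5/3,3/2,4/3\}$ being ``realised by extensions''.

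Second, the Duplicator strategy is not actually supplied. Maintaining the invariant $K(G;x_1,\ldots,x_\ell;4)\cong K(H;y_1,\ldots,y_\ell;4)$ is essentially the assertion $(G;x_1,\ldots,x_\ell)\sim_4(H;y_1,\ldots,y_\ell)$, which is what has to be proved, so as stated the invariant is circular; and the case split ``in the closure / rigid chain / safe'' collapses (a one-vertex rigid extension of the closure lies in the closure by maximality) and, more importantly, does not address the real difficulty: when Spoiler moves outside the closure in rounds two or three, an $\alpha$-safe response is not automatically winning, because in the last round Spoiler can pebble vertices forming $\alpha$-rigid extensions (e.g.\ common neighbours) over the \emph{enlarged} tuple, and Duplicator's reply must already carry matching rigid extensions while having no extraneous ones. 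This is precisely why the paper's proof constructs the auxiliary graphs $G_3$, $H_3$ containing candidate common neighbours, uses $t$-generic extensions to forbid unwanted rigid extensions over the image, and carries out the long case analysis of Lemma~\ref{3_closure} and of the second and third rounds. Also note that rigid extensions cannot be ``produced'' on demand by Theorem~\ref{double-extension}; generic extensions are used in the opposite direction, to guarantee their absence, while rigid extensions of the chosen vertices must be matched inside the previously embedded closure copy. Finally, the claim that $G$ and $H$ ``a.a.s.\ realize the same collection of vertex $\sim_4$-classes'' is exactly the first-round matching that needs proof; the paper obtains it by embedding $\cl_3(V(K(G;x_1;4)))$ into $H$ as a $C$-generic extension, not by assumption.
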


\begin{proof}
Let us prove that, for all but a finite number of $\alpha \in A_{\varepsilon}$, Duplicator has an (a.a.s.) winning strategy in the game $\EHR(G,H,4)$, where $G \sim G(n,n^{-\alpha})$, $H \sim G(m,m^{-\alpha})$.

Let $a$ be an a.a.s. uniform (by all $\alpha\in A_{\varepsilon}$) upper bound for $v(\cl_3(U))+C(4)$, where $U \in V(G(n,n^{-\alpha}))$ is a set of cardinality $C(4)$ (such an upper bound exists by Lemma~\ref{finite_closure}). Let $C_1$ be an a.a.s. uniform upper bound for $v(\cl_3(\widetilde{U}))$, where $\widetilde{U}$ is a set of cardinality $a$. Let $C_2$ be an a.a.s. uniform upper bound for $v(\cl_3(\{x_1,x_2\})) + 3$, where $x_1,x_2\in V(G(n,n^{-\alpha}))$. Set $C\geq\max\{C_1,C_2\}$.

Let $\mathcal{A}_{\varepsilon}$ be the set of all $\alpha \in A_{\varepsilon}$, that are not equal to any fraction $s/t$ with $s \leq C$. Obviously, the set $A_{\varepsilon}\setminus\mathcal{A}_{\varepsilon}$ is finite. Fix an arbitrary $\alpha \in \mathcal{A}_{\varepsilon}$, and let $\mathcal{S}(\alpha)$ be the set of all graphs $G$ that satisfy the following properties.
\begin{itemize}
\item[P1] There are no subgraphs $W \subset G$ with $v(W) \leq C$ and $\rho(W)>1/\alpha$.
\item[P2] For every $\alpha$-safe pair $(W_1,W_2)$ with $v(W_1) \leq C$, $G$ has the $C$-generic $(W_1,W_2)$-extension property (the generic extension property is defined in the proof of Lemma~\ref{strategy}).
\item[P3] For every $x_1,x_2 \in V(G)$, $v(\cl_3(\{x_1,x_2\})) \leq C - 3$ .
\item[P4] For every $U\subset V(G)$ with $|U|\leq C(4)$, $v(\cl_3(U))+C(4) \leq a$, and for every $\widetilde{U}\subset V(G)$ with $|\widetilde{U}|\leq a$, $v(\cl_3(\widetilde{U})) \leq C$.
\end{itemize}

By Theorem~\ref{existence_and_extension} and Theorem~\ref{double-extension}, $\lim_{n\rightarrow\infty}\mathsf{P}(G(n,n^{-\alpha})\in\mathcal{S})=1$, and it is sufficient to describe a Duplicator's winning strategy in $\EHR(G,H,4)$ for $G,H\in\mathcal{S}(\alpha)$.

We will show that Duplicator can play in the first two rounds so that, for chosen vertices $x_1,x_2 \in V(G)$, $y_1,y_2 \in V(H)$, the graphs $\cl_3(\{x_1,x_2\})$ and $\cl_3(\{y_1,y_2\})$ are isomorphic. After that Duplicator wins due to the following lemma.


\begin{lemma}
Let $\alpha\in\mathcal{A}_{\varepsilon}$, $G,H\in\mathcal{S}(\alpha)$. Let $x_1,x_2 \in V(G)$ and $y_1,y_2 \in V(H)$ be vertices such that $\cl_3(\{x_1,x_2\})\cong\cl_3(\{y_1,y_2\})$. Then Duplicator has a winning strategy in $\EHR[(G;x_1,x_2),(H;y_1,y_2),4]$.
\label{3_closure}
\end{lemma}

\begin{proof}
Denote $G_2=\cl_3(\{x_1,x_2\})$, $H_2=\cl_3(\{y_1,y_2\})$. Without loss of generality, suppose that, in the third round, Spoiler chooses a vertex $x_3 \in V(G)$. If $x_3 \in G_2$, then Duplicator chooses the image of $x_3$ under an isomorphism $\varphi_2: G_2 \rightarrow H_2$. Further, if in the fourth round Spoiler chooses a vertex $x_4 \in V(G_2)$ (or $y_4 \in V(H_2)$), then Duplicator chooses $y_4 = \varphi(x_4)$ (or $x_4 = \varphi^{-1}(y_4)$) and she wins. If Spoiler chooses, say, a vertex $x_4 \in V(G) \setminus V(G_2)$, then, by the definition of $G_2$, the pair $(G|_{\{x_1,x_2,x_3,x_4\}},G|_{\{x_1,x_2,x_3\}})$ is $\alpha$-safe, and hence, by the property P2, there exists $y_4$ such that $G|_{\{x_1,x_2,x_3,x_4\}} \cong H|_{\{y_1,y_2,y_3,y_4\}}$. Duplicator chooses this $y_4$ and she wins. If, in the fourth round, Spoiler chooses a vertex $y_4 \in V(H) \setminus V(H_2)$, then Duplicator's winning strategy is analogous.

Let $x_3 \in V(G) \setminus V(G_2)$. Define the graph $G_3 \supset G_2$ in the following way.
\begin{itemize}
\item If the set $N(x_1,x_3) \setminus V(G_2)$ is nonempty, then put an arbitrary vertex from this set into $G_3$.
\item If the set $N(x_2,x_3) \setminus V(G_2)$ is nonempty, then put an arbitrary vertex from this set into $G_3$.
\item Put $x_3$ into $G_3$.
\end{itemize}
Obviously, $v(G_3,G_2) \leq 3$. By the definition of $G_2$ and Lemma~\ref{has_rigid}, the pair $(G_3,G_2)$ is $\alpha$-safe, moreover, by the property P3, $v(G_3) \leq C$, hence, by the property P2, there exists a subgraph $H_3 \subset H$, such that $(H_3,H_2)$ is a $1$-generic $(G_3,G_2)$-extension. Denote an isomorphism between $G_3$ and $H_3$ by $\varphi_3$. Duplicator chooses $y_3=\varphi_3(x_3)$. If, in the fourth round, Spoiler chooses a vertex that forms an $\alpha$-safe extension over the three previously chosen vertices, then he, obviously, loses. If he chooses a vertex $y_4 \in V(H)$ such that $(H|_{\{y_1,y_2,y_3,y_4\}},H|_{\{y_1,y_2,y_3\}})$ is $\alpha$-rigid, then $y_4 \in V(H_3)$, Duplicator chooses $x_4 = \varphi_3^{-1}(y_4)$ and she wins. If Spoiler chooses a vertex $x_4 \in V(G)$, such that the pair $(G|_{\{x_1,x_2,x_3,x_4\}},G|_{\{x_1,x_2,x_3\}})$ is $\alpha$-rigid, then, by the definition of $G_3$, there exists a vertex $x'_4 \in V(G_3)$ such that $G|_{\{x_1,x_2,x_3,x_4\}} \cong G|_{\{x_1,x_2,x_3,x'_4\}}$, Duplicator chooses $y_4 = \varphi(x'_4)$ and she wins.
\end{proof}

Let us define the Duplicator's strategy for the first two rounds. Without loss of generality, in the first round, Spoiler chooses a vertex $x_1 \in V(G)$. Set $G'_1 = K(G;x_1;4)$, $G_1 = \cl_3(V(G'_1))$. By the property P4, the size of $G_1$ is not greater than $a-C(4)$. By the property P1, $\rho(G_1) < 1/\alpha$ (the case of equality is not possible as $\alpha\in\mathcal{A}_{\varepsilon}$). Therefore, by the property P2, there exists a subgraph $H_1 \subset H$ such that $(H_1,(\varnothing,\varnothing))$ is a $C$-generic $(G_1,(\varnothing,\varnothing))$-extension. Duplicator chooses $y_1\in H$, that is the image of $x_1$ under an isomorphism $\varphi: G_1 \rightarrow H_1$. Denote $H'_1 = \varphi(G'_1)$.

If, in the second round, Spoiler chooses a vertex $x_2 \in V(G)$, we may assume that $x_2 \in V(G'_1)$. Indeed, by the definition of $G'_1$, for every $x_2 \in V(G)$ there exists an $x'_2 \in V(G'_1)$, such that $(G;x_1,x_2) \sim_4 (G;x_1,x'_2)$. Hence, if we prove that, for some $y_2 \in V(H)$, $(G;x_1,x'_2) \sim_4 (H;y_1,y_2)$, then, by transitivity, $(G;x_1,x_2) \sim_4 (H;y_1,y_2)$. Thus, let $x_2 \in V(G'_1)$. Duplicator chooses $y_2=\varphi(x_2)$. Obviously, $\cl_3(\{x_1,x_2\}) \subset \cl_3(V(G'_1))$, $\cl_3(\{y_1,y_2\}) \subset \cl_3(V(H'_1))$, and hence $\cl_3(\{x_1,x_2\})\cong\cl_3(\{y_1,y_2\})$. Duplicator wins due to Lemma~\ref{3_closure}. Analogously if Spoiler chooses a vertex $y_2 \in V(H_1)$, then Duplicator chooses $x_2=\varphi^{-1}(y_2)$ and she wins.

Suppose that Spoiler chooses a vertex $y_2 \in V(H) \setminus V(H_1)$. Denote $H'_2=H_1 \cup K(H;y_1,y_2;4)$, $H_2 = \cl_3(V(H'_2))$. By the property P4, $v(H_2) \leq C$. Then, by the definition of $H_1$, there are no $\alpha$-rigid pairs $(S,H_1)$ ($H_1 \subset S \subseteq H_2$), hence, by Lemma~\ref{has_rigid}, $(H_2,H_1)$ is $\alpha$-safe. By the property P2, there exists a subgraph $G_2 \subset G$, such that $(G_2,G_1)$ is a $3$-generic $(H_2,H_1)$-extension. Because of $G_1 = \cl_3(V(G_1))$, and $(G_2,G_1)$ is a 3-generic extension, $\cl_3(\{x_1,x_2\}) \subset G_2$. Thus, $\cl_3(\{x_1,x_2\}) \cong \cl_3(\{y_1,y_2\})$, and Duplicator wins due to Lemma~\ref{3_closure}.
\end{proof}


\begin{lemma}
The set $S(4) \cap (2/3,1)$ is finite.
\end{lemma}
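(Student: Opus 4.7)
\emph{Proof plan.} I would adapt the strategy used in Lemma~\ref{except_4_points}, replacing the $3$-closure by a $t$-closure of smaller parameter in which Lemma~\ref{finite_closure} gives uniform bounds on $(2/3,1)$. The global fact I exploit throughout is that for $\alpha\in(2/3,1)$ we have $1/\alpha<3/2$, which severely restricts the rigid patterns that can appear.

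I would split $(2/3,1)$ into $(2/3,3/4)$ and $[3/4,1)$; the single rational $3/4$ can contribute at most one element to $S(4)$. On $[3/4,1)$ there is no irreducible fraction $a/b$ with $a\leq 2$, so Lemma~\ref{finite_closure} applied with $t=2$, $K=2/3$, $\varepsilon_0=1/12$ gives a uniform bound on $v(\cl_2(U))$ for bounded $U$. On $(2/3,3/4)$ a short edge-count shows that every $\alpha$-rigid pair $(W,G_2)$ with $v(W,G_2)=3$ contains a proper $\alpha$-rigid sub-extension of size at most $2$: otherwise each added vertex would have at most one neighbour in $G_2$ (else a size-$1$ rigid sub-extension appears, using $1/\alpha<3/2<2$) and each pair of added vertices would contribute at most $2$ new edges including the edge between them (else a size-$2$ rigid sub-extension appears, using $2/\alpha<3$), whence the total number of new edges is at most $4$, contradicting the size-$3$ rigidity requirement $e>3/\alpha>4$. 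Hence $\cl_2=\cl_3$ on $(2/3,3/4)$, so $\cl_2$ is again the right object to work with.

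I then run the strategy of Lemma~\ref{except_4_points} verbatim with $\cl_2$ in place of $\cl_3$, i.e.\ re-proving the analog of Lemma~\ref{3_closure} in which Duplicator's auxiliary set $G_3\supset G_2$ satisfies $v(G_3,G_2)\leq 2$; then $\cl_2$-maximality combined with Lemma~\ref{has_rigid} forces $(G_3,G_2)$ to be $\alpha$-safe, so the generic extension property (P2) furnishes Duplicator's round-$4$ response. To eliminate accumulation at $2/3$ from above, I would further observe that Lemma~\ref{finite_closure} with $t=1$, $K=1/2$, $\varepsilon_0=1/6$ already gives a uniform bound on $v(\cl_1(U))$ on the whole $[2/3,3/4)$ without any $\varepsilon$-margin; combined with the $\cl_2=\cl_3$ identity established above this lets me conclude uniformly on $(2/3,3/4)$.

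\emph{Main obstacle.} The hard part is carrying out the reduction to $v(G_3,G_2)\leq 2$ in the analog of Lemma~\ref{3_closure}. The original argument added $x_3$ together with common neighbours of $x_3$ with $x_1$ and with $x_2$, giving up to $3$ new vertices; dropping to $2$ requires a further case analysis of the possible rigid configurations that a Spoiler move in round~$4$ could create together with $\{x_1,x_2,x_3\}$ in the narrow range $1/\alpha\in(1,3/2)$, to confirm that any ``missing'' close neighbour can always be resupplied via a generic extension of the pared-down $G_3$ rather than by prior inclusion. The analogous check for $\cl_1$ near $2/3$ is even tighter, and this is where I expect the bulk of the casework.
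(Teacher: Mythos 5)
There is a genuine gap, in fact two. First, the step you yourself flag as the ``main obstacle'' is not a technicality to be deferred: it is where the entire difficulty of this lemma lives. In the analog of Lemma~\ref{3_closure} with an auxiliary extension of only $2$ vertices, Duplicator can no longer pre-place witnesses for both $N(x_1,x_3)$ and $N(x_2,x_3)$; if Spoiler then plays, in round~4, a common neighbour of the omitted pair (a single-vertex $\alpha$-rigid extension, since $2>1/\alpha$), the generic extension property P2 only supplies $\alpha$-safe responses and the isomorphism of the pared-down configurations gives no guarantee that a matching common neighbour exists on the other side. The paper does not resolve this by shrinking the auxiliary set; it abandons closures altogether for this range and works with the bounded game-tree graphs $K(G;x_1;4)$, $K(H;y_1,y_2;4)$, etc., handling the rigid round-3 and round-4 moves by an explicit case analysis (the sets $W(x_1,x_2,x_3)$ with $v_0,\dots,v_3$ and statements S1--S3) that re-routes Spoiler's move to a $\sim_4$-equivalent vertex inside $K(G;x_1;4)$, using $1/\alpha<3/2$ repeatedly. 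So the content you have omitted is essentially the whole proof.

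Second, and more structurally, your treatment of $(2/3,3/4)$ cannot work as described. Your identity $\cl_2=\cl_3$ on $(2/3,3/4)$ is arithmetically correct, but neither closure is uniformly bounded there: for $\alpha$ slightly above $2/3$, two-vertex rigid steps with $3$ new edges (density $3/2>1/\alpha$) exist, and chains of such steps of length up to roughly $1/(3\alpha-2)$ appear a.a.s., so $v(\cl_2(U))$ blows up as $\alpha\downarrow 2/3$; correspondingly, Lemma~\ref{finite_closure} with $t=2$ is inapplicable on intervals with left end near $2/3$ (its $\varepsilon=a_1/b_1-K$ degenerates). Your fallback --- a uniform bound on $\cl_1$ ``combined with'' $\cl_2=\cl_3$ --- is a non sequitur: bounding $\cl_1$ does not bound $\cl_2$, and matching $1$-closures is too weak to support the strategy. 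Concretely, Spoiler can choose $x_1,x_2$ to be the endpoints of an induced path of length $3$ (such paths abound since $n^4p^3\to\infty$); its two interior vertices form a $2$-vertex $\alpha$-rigid extension over $\{x_1,x_2\}$ that is invisible to $\cl_1$, and nothing forces the Duplicator pair $(y_1,y_2)$ to admit one. Thus your argument does not exclude accumulation of $S(4)$ at $2/3$ from above, which (together with $3/4$) is precisely what this lemma is for. A smaller remark: on $[3/4,1)$ the decomposition fails ($1/\alpha<4/3$ admits $3$-vertex rigid ``pendant triangle'' extensions with no rigid sub-extension of size $\le 2$), so there too everything hinges on the unproven size-$2$ reduction rather than on any $\cl_2=\cl_3$ identity.
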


\begin{proof}
Let $\mathcal{A}$ be the set of all $\alpha\in(2/3,1)$ that are not equal to any fraction $s/t$ with $s \leq 2C(4)+3$. Obviously, $(2/3,1)\setminus\mathcal{A}$ is a finite set. We will prove that, for all $\alpha \in \mathcal{A}$, Duplicator has an (a.a.s.) winning strategy in the game $\EHR(G,H,4)$, where $G \sim G(n,n^{-\alpha})$, $H \sim G(m,m^{-\alpha})$. Fix an arbitrary $\alpha\in\mathcal{A}$, and let $\mathcal{S}(\alpha)$ be the set of all graphs $G$ that satisfy the following properties.
\begin{itemize}
\item[P1] There are no subgraphs $W \subset G$ with $v(W) \leq 2C(4)+3$ and $\rho(W)>1/\alpha$.
\item[P2] For every $\alpha$-safe pair $(W_1,W_2)$ with $v(W_1) \leq 2C(4)+3$, $G$ has the $C(4)$-generic $(W_1,W_2)$-extension property (the generic extension property is defined in the proof of Lemma~\ref{strategy}).
\end{itemize}

By Theorem~\ref{double-extension}, $\lim_{n\rightarrow\infty}\mathsf{P}(G(n,n^{-\alpha})\in\mathcal{S})=1$, and it is sufficient to describe a Duplicator's winning strategy in $\EHR(G,H,4)$ for $G,H\in\mathcal{S}(\alpha)$.

Without loss of generality, in the first round, Spoiler chooses a vertex $x_1 \in V(G)$. Denote $G_1 = K(G;x_1;4)$. Obviously, $v(G_1) \leq C(4)$. By the property P1, $\rho(G_1) < 1/\alpha$ (the case of equality is impossible as $\alpha\in\mathcal{A}$). By the property P2, there exists a subgraph $H_1 \subset H$ such that $(H_1,(\varnothing,\varnothing))$ is a $C(4)$-generic $(G_1,(\varnothing,\varnothing))$-extension. Duplicator chooses a vertex $y_1\in H$, that is the image of $x_1$ under an isomorphism $\varphi: G_1 \rightarrow H_1$.

Further, while Spoiler chooses only vertices from $G_1$ or $H_1$, Duplicator chooses their images under the isomorphism $\varphi$ or $\varphi^{-1}$. So, if Spoiler's moves are only in $G_1 \cup H_1$ then Duplicator wins. Let $i$ be the first round when Spoiler chooses a vertex not from $G_1 \cup H_1$. If it is a vertex $x_i \in V(G)$, then find a vertex $x'_i \in V(G_1)$ such that $(G;x_1,\ldots,x_i) \sim_4 (G;x_1,\ldots,x'_i)$. The replacement of $x_i$ by $x'_i$ would not change the result of the game. Thus, we may assume that the first vertex chosen outside $G_1 \cup H_1$ is a vertex from $H$. Consider several cases.

{\bf Case 1: $\mathbf{i=4}$}. Let $x_1,x_2,x_3 \in G_1$, $y_1,y_2,y_3 \in H_1$ be chosen in the first 3 rounds. In the fourth round, Spoiler chooses a vertex $y_4 \in V(H) \setminus V(H_1)$. By the definition of $H_1$, the pair $(H|_{V(H_1)\cup\{y_4\}},H_1)$ is $\alpha$-safe. By the property P2, there exists a vertex $x_4 \in V(G)$, such that $G|_{V(G_1)\cup\{x_4\}} \cong H|_{V(H_1)\cup\{y_4\}}$, Duplicator chooses this vertex and she wins.

{\bf Case 2: $\mathbf{i=3}$}. Let $x_1,x_2 \in G_1$, $y_1,y_2 \in H_1$ be chosen in the first two rounds, in the third round, Spoiler chooses a vertex $y_3 \in V(H) \setminus V(H_1)$. Denote $H_3=H_1 \cup K(H;y_1,y_2,y_3;4)$. Note that $v(H_3,H_1) \leq C(4)$ and $v(H_3) \leq 2C(4)$. By the definition of $H_1$ and Lemma~\ref{has_rigid}, the pair $(H_3, H_1)$ is $\alpha$-safe. By the property P2, there exists a subgraph $G_3 \subset G$ such that $(G_3,G_1)$ is a $1$-generic $(H_3,H_1)$-extension. Denote an isomorphism between $G_3$ and $H_3$ by $\varphi_3$. Duplicator chooses $x_3=\varphi^{-1}(y_3)$.

If in the fourth round, Spoiler chooses a vertex $y_4 \in V(H)$, we may assume that $y_4 \in V(H_3)$, because $H_3$ contains $K(H;y_1,y_2,y_3;4)$ as  a subgraph. Duplicator chooses $x_4=\varphi^{-1}(y_4)$ and she wins. If Spoiler chooses $x_4 \in G_3$, then Duplicator chooses $y_4=\varphi(x_4)$ and she wins.

Suppose that in the fourth round Spoiler chooses a vertex $x_4 \in V(G) \setminus V(G_3)$. If $(G|_{\{x_1,x_2,x_3,x_4\}},G|_{\{x_1,x_2,x_3\}})$ is $\alpha$-safe, then by the property P2 Duplicator has a winning move $y_4\in V(H)$. Let $(G|_{\{x_1,x_2,x_3,x_4\}},G|_{\{x_1,x_2,x_3\}})$ be $\alpha$-rigid. By the definition of $G_3$, $x_4$ can not be adjacent to any of $V(G_3,G_1)$, hence $x_1 \sim x_4$, $x_2 \sim x_4$, $x_3 \nsim x_4$. Find an $x'_4 \in V(G_1)$ such that $(G;x_1,x_2,x_4) \sim_4 (G;x_1,x_2,x'_4)$. Find an $x''_4\in V(G_1)$ such that $(G;x_1,x_2,x'_4,x_4) \sim_4 (G;x_1,x_2,x'_4,x''_4)$. Obviously, $x'_4,x''_4 \in N(x_1,x_2)$. Consider the vertices $y'_4=\varphi(x'_4)$, $y''_4=\varphi(x''_4)$. As $y'_4,y''_4 \in V(H_1)$, and $(H|_{V(H_1)\cup\{y_3\}},H_1)$ is $\alpha$-safe, $y_3$ is not adjacent to at least one of $y'_4$, $y''_4$. Duplicator chooses this non-adjacent to $y_3$ vertex and she wins.

{\bf Case 3: $\mathbf{i=2}$}. In the second round, Spoiler chooses a vertex $y_2 \in V(H) \setminus V(H_1)$. Define $H_2=H_1 \cup K(H;y_1,y_2;4)$. Note that $v(H_2,H_1) \leq C(4)$ and $v(H_2)\leq 2C(4)$. By the definition of $H_1$ and Lemma~\ref{has_rigid}, the pair $(H_2, H_1)$ is $\alpha$-safe. By the property P2, there exists a subgraph $G_2 \subset G$, such that $(G_2,G_1)$ is a $3$-generic $(H_2,H_1)$-extension. Denote an isomorphism between $G_2$ and $H_2$ by $\varphi_2$. Duplicator chooses $x_2=\varphi^{-1}(y_2)$.

If, in the third round, Spoiler chooses a vertex $y_3 \in V(H)$, then we may assume that $y_3 \in V(H_2)$. Duplicator chooses $x_3=\varphi_2^{-1}(y_3)$. Let $y_3 \in V(H_1)$, $x_3 \in V(G_1)$. In {\bf Case 2}, we have proved that $(G;x_1,x_3,x_2) \sim_4 (H;y_1,y_3,y_2)$. Therefore $(G;x_1,x_2,x_3) \sim_4 (H;y_1,y_2,y_3)$, and Duplicator wins. Consider the case $y_3 \in V(H_2) \setminus V(H_1)$, $x_3 \in V(G_2) \setminus V(G_1)$. If in the fourth round Spoiler chooses a vertex from $V(H) \cup V(G_2)$, then he obviously loses. If he chooses $x_4 \in V(G) \setminus V(G_2)$, then $(G|_{\{x_1,x_2,x_3,x_4\}},G|_{\{x_1,x_2,x_3\}})$ is $\alpha$-safe and he also loses.

If in the third round Spoiler chooses $x_3 \in V(G_2)$, then Duplicator chooses $y_3=\varphi_2(x_3)$. Above, we prove that if Spoiler chooses $y_3$ instead of $x_3$, then $x_3$ is a winning move for Duplicator. Therefore, in the current situation, $y_3$ is a winning move in response to Spoiler's $x_3$.

Suppose that in the third round Spoiler chooses a vertex $x_3 \in V(G) \setminus V(G_2)$. In an arbitrary graph, denote by $N(a_1,\ldots,a_m,\overline{b}_1,\ldots,\overline{b}_{\ell})$ the set of all vertices adjacent to all of $a_i$ and none of $b_j$. Define a set $W=W(x_1,x_2,x_3)$ in the following way.
\begin{itemize}
\item Put $x_1$, $x_2$, $x_3$ into $W$.
\item If $N(x_1,x_2,x_3) \neq \varnothing$, then put an arbitrary $v_{0} \in N(x_1,x_2,x_3)$ into $W$.
\item If $N(x_2,x_3,\overline{x}_1) \neq \varnothing$, then put an arbitrary $v_{1} \in N(x_2,x_3,\overline{x}_1)$ into $W$.
\item If $N(x_1,x_3,\overline{x}_2) \neq \varnothing$, then put an arbitrary $v_{2} \in N(x_1,x_3,\overline{x}_2)$ into $W$.
\item If $N(x_1,x_2,\overline{x}_3) \neq \varnothing$, then put an arbitrary $v_{3} \in N(x_1,x_2,\overline{x}_3)$ into $W$.
\end{itemize}

As $(G_2,G_1)$ is a $3$-generic extension, the vertices $v_0$ and $v_3$ (if they exist) are contained in $V(G_2)$. Hence, $|W \setminus V(G_2)| \leq 3$. Denote by $G_3$ the subgraph of $G$, whose set of vertices is $V(G_2) \cup W$, and the set of edges is
$$
E(G_2) \cup \left(E(G|_{W}) \setminus \bigcup_{0 \leq i < j \leq 3}\{v_i,v_j\}\right).
$$
Obviously, $v(G_3) \leq 2C(4)+3$. Consider the pair $(G_3,G_2)$.

If $(G_3,G_2)$ is $\alpha$-safe, then, by the property P2, there exists a subgraph $H_3 \subset H$ such that $(H_3,H_2)$ is a $1$-generic $(G_3,G_2)$-extension. Denote an isomorphism between $G_3$ and $H_3$ by $\varphi_3$. Duplicator chooses $y_3=\varphi(x_3)$. If, in the fourth round, Spoiler chooses a vertex that forms an $\alpha$-safe extension over the triple of previously chosen vertices, then Duplicator wins. If Spoiler chooses $x_4 \in V(G)$, that forms $\alpha$-rigid extension over $(x_1,x_2,x_3)$, then we may assume that $x_4 \in W$. Duplicator chooses $y_4=\varphi_3(x_4)$ and she wins. If Spoiler chooses $y_4 \in V(H_3)$, then Duplicator chooses $x_4=\varphi_3^{-1}(x_4)$ and wins again. If Spoiler chooses $y_4 \in V(H) \setminus V(H_3)$, that forms an $\alpha$-rigid extension over $(y_1,y_2,y_3)$, then $y_4 \nsim y_3$ as $(H_3,H_2)$ is a $1$-generic extension. Hence, $y_4 \in N(y_1,y_2,\overline{y}_3)$. In this case, there exists at least two vertices $y'_4,y''_4 \in N(y_1,y_2) \cap V(K(H;y_1,y_2;4))$, and at least one of them is not contained in $V(H_1)$ because $(H_2,H_1)$ is $\alpha$-safe. Consider the vertices $x'_4=\varphi^{-1}(y'_4)$, $x''_4=\varphi^{-1}(y''_4)$. At least one of them, say $x'_4$, is not adjacent to $x_3$ because $(G_2,G_1)$ is a $3$-generic extension. Duplicator chooses $x'_4$ and she wins.

Suppose that $(G_3,G_2)$ is not $\alpha$-safe. Let us prove the following two statements:
\begin{itemize}
\item[S1] There are no edges between $x_3$ and the vertices from $V(G_2) \setminus V(G_1)$.
\item[S2] If $v_1$ exists, then $v_1 \in V(G_1)$.
\end{itemize}

By Lemma~\ref{has_rigid}, there exists an $\alpha$-rigid pair $(S,G_2)$ ($G_2 \subset S \subseteq G_3$). Pick a maximal such $S$. There are no edges between $V(S,G_2)$ and $V(G_2,G_1)$, because $(G_2,G_1)$ is a 3-generic extension. Therefore $v_1 \notin V(S,G_2)$. If $x_3 \in V(S,G_2)$ then S1 is true, moreover $v_1$ can not belong to $V(G_3,G_2)$, otherwise, $(G|_{V(S)\cup\{v_1\}},G_2)$ is $\alpha$-rigid and $S$ is not maximal. Thus, $v_1 \notin V(G_3,G_2)$. Moreover, $v_1 \notin V(G_2,G_1)$ because $x_3\sim v_1$. Hence $v_1\in V(G_1)$ (if it exists) and S2 is true. Assume that $x_3 \notin V(S,G_2)$, then $V(S,G_2)=\{v_2\}$. If S1 is not true, then $(G|_{V(S)\cup\{x_3\}},G_2)$ is $\alpha$-rigid and $S$ is not maximal --- a contradiction. If S2 is not true, then $v_1$ must be in $V(G,G_2)$ (the case $v_1 \in V(G_2,G_1)$ is impossible due to S1). But then $(G|_{V(S)\cup\{x_3,v_1\}},G_2)$ is $\alpha$-rigid (it follows from the facts that $\alpha>2/3$ and that there are at least 3 edges: $\{v_2,x_3\}$, $\{x_3,v_1\}$, $\{v_1,x_2\}$, in  $E(G|_{V(S)\cup\{x_3,v_1\}},S)$), again $S$ is not maximal.

We have already mentioned that $v_0 \in V(G_2)$ (if it exists). The following statement is a corollary of S1.
\begin{itemize}
\item[S3] If $v_0$ exists, then $v_0 \in V(G_1)$.
\end{itemize}

Let us prove that only one vertex among $v_0$, $v_1$, $v_3$ exists. As $(G_2,G_1)$ is $\alpha$-safe, there is at most one edge between $x_2$ and $V(G_1)$, so (by S2 and S3) the vertices $v_0$ and $v_1$ could not both exist. By the same reason, if one of $v_0$, $v_1$ exists, then $v_3$ could not be in $V(G_1)$. Moreover, if $v_3 \in V(G_2,G_1)$, then $\rho(G_2|_{V(G_1)\cup\{x_2,v_3\}},G_1) \geq 3/2 > 1/\alpha$, and $(G_2,G_1)$ is not $\alpha$-safe --- a contradiction.

Suppose that there exists $v_0$. We have already proved that, for every $x'_3 \in V(G_2)$, there exists an $y_3 \in V(H)$ such that $(G;x_1,x_2,x'_3) \sim_4 (H;y_1,y_2,y_3)$. Hence by transitivity of $\sim_4$ it is sufficient to find such an $x'_3 \in V(G_2)$, that $(G;x_1,x_2,x_3) \sim_4 (G;x_1,x_2,x'_3)$. By the definition of $G_1$ and S1, there exists a vertex $x'_3 \in V(G_1)$, such that $(G;x_1,v_0,x_3) \sim_4 (G;x_1,v_0,x'_3)$. Let us prove, that $(G;x_1,x_2,x_3) \sim_4 (G;x_1,x_2,x'_3)$. Consider the set $W'=W(x_1,x_2,x'_3)$ defined in the same way as $W$, but with replacement of $x_3$ by $x'_3$. Denote the elements of $W'\setminus\{x_1,x_2,x'_3\}$ by $v'_0,\ldots,v'_3$. Note that (by the property P2) it is sufficient to show that, for each $i\in\{0,1,2,3\}$, $v_i$ exists if and only if $v'_i$ exists. Obviously, $v_0 = v'_0$, so they both exist. If $v_2$ exists, then, by the definition of $x'_3$, the set $N(x_1,x'_3)\setminus\{v_0\}$ is nonempty. As $(G_2,G_1)$ is $\alpha$-safe, and it is a $3$-generic extension, and $v_0 \in V(G_1)$, each of $N(x_1,x'_3)\setminus\{v_0\}$ is not adjacent to $x_2$, hence $v'_2$ exists (it could be any of $N(x_1,x'_3)\setminus\{v_0\}$). Analogously, if $v'_2$ exists, then $N(x_1,x_3)\setminus\{v_0\}$ is nonempty, and $v_2$ could be any vertex from this set, so $v_2$ exists. Note also that at most one vertex among $v'_0$ ,$v'_1$, $v'_3$ exists (the proof is the same as the above one for $v_0$, $v_1$, $v_3$). Thus, none of $v_1$, $v'_1$, $v_3$, $v'_3$ exists, and the equivalence $(G;x_1,x_2,x_3) \sim_4 (G;x_1,x_2,x'_3)$ is proved.

The case when $v_1$ exists could be considered literally in the same way with the replacement of $v_0$ by $v_1$ and vice versa.

Suppose that there exists $v_3$. Then $v_0$ and $v_1$ do not exist. As $(G_3,G_2)$ is not $\alpha$-safe, $x_3$ must be adjacent to $x_1$, and $v_2$ must exist. By the definition of $G_1$, there exist $x'_3,x''_3 \in V(G_1)$ such that $(G;x_1,x_3,v_2) \sim_4 (G;x_1,x'_3,x''_3)$. If $v_3 \in V(G_1)$, then at least one of $x'_3$, $x''_3$ (say $x'_3$) is not adjacent to $v_3$, because otherwise $\rho(G|_{\{x_1,x'_3,x''_3,x_3\}})=3/2>1/\alpha$, that is impossible by P1. Note that the sets $N(x_1,x_2,x'_3)$, $N(x_1,x_2,x_3)$, $N(x_2,x'_3,\overline{x}_1)$, $N(x_2,x_3,\overline{x}_1)$ are empty, and the sets $N(x_1,x'_3,\overline{x}_2)$, $N(x_1,x_3,\overline{x}_2)$, $N(x_1,x_2,\overline{x'}_3)$, $N(x_1,x_2,\overline{x}_3)$ are nonempty. Hence, $(G;x_1,x_2,x_3) \sim_4 (G;x_1,x_2,x'_3)$.

Let $v_3 \in V(G_2) \setminus V(G_1)$. Note that any $x'_3 \in V(G_1)$ is not adjacent to $v_3$, because otherwise the vertex $v_3$ would form an $\alpha$-rigid extension over $G_1$. So, it is sufficient to find such an $x'_3 \in V(G_1)$, that $x'_3 \sim x_1$, $N(x_1,x'_3) \neq \varnothing$ and $N(x_2,x'_3) = \varnothing$. If $v_2 \in V(G_1)$, then there exists a vertex $v \in V(G_1)$, such that $(G;x_1,v_2,x_3) \sim_4 (G;x_1,v_2,v)$, and there exists $u \in V(G_1)$, such that $(G;x_1,v_2,v,x_3) \sim_4 (G;x_1,v_2,v,u)$. All three vertices $v_2$, $v$, $u$ are adjacent to $x_1$, and all of them has at least one common neighbor with $x_1$. If at least one of them does not have common neighbors with $x_2$, then this vertex meets the requirements for $x'_3$. Suppose that there exist all three common neighbors $\widetilde{v} \in N(x_2,v)$, $\widetilde{u} \in N(x_2,u)$, $\widetilde{v}_2 \in N(x_2,v_2)$. All of these common neighbors must be in $V(G_2)$, because $(G_2,G_1)$ is a $3$-generic extension. If some of them are in $V(G_1)$, then $\rho(G|_{V(G_1)\cup\{x_2, v_3\}},G_1) \geq 3/2 > 1/\alpha$ --- a contradiction with the fact that $(G_2,G_1)$ is $\alpha$-safe. If $\widetilde{v},\widetilde{u},\widetilde{v}_2 \in V(G_2,G_1)$, then $\rho(G|_{V(G_1)\cup\{\widetilde{v},\widetilde{u},\widetilde{v}_2,x_2\}},G_1) \geq 6/4 > 1/\alpha$, and again we get a contradiction with the fact that $(G_2,G_1)$ is $\alpha$-safe.

Consider the last case $v_2 \notin V(G_1)$. Let $v \in V(G_1)$ be a vertex such that $(G;x_1,x_3) \sim_4 (G;x_1,v)$. Let $u \in V(G_1)$ be a vertex such that $(G;x_1,v,x_3) \sim_4 (G;x_1,v,u)$. Let $w \in V(G_1)$ be a vertex such that $(G;x_1,v,x_3,v_2) \sim_4 (G;x_1,v,u,w)$. Obviously, $v$, $u$ and $w$ are pairwise distinct, they all are adjacent to $x_1$ and all have common neighbors with $x_1$. Hence, analogously with the previous case, at least one of these vertices meets the requirements for $x'_3$. Thus, we have considered all the cases, and proved that Duplicator has a winning strategy.
\end{proof}


\begin{thebibliography}{99}

\bibitem{Strange} J.H. Spencer, {\it The Strange Logic of Random Graphs}, Number 22 in Algorithms and Combinatorics, Springer-Verlag, Berlin, 2001.

\bibitem{Survey} M.E.~Zhukovskii, A.M.~Raigorodskii, {\it Random graphs: models and asymptotic characteristics}, Russian Mathematical Surveys, {\bf 70}:1 (2015),~33--81.

\bibitem{Bol} B. Bollob\'{a}s, {\it Random Graphs}, 2nd Edition, Cambridge University Press, 2001.

\bibitem{Janson} S. Janson, T. \L uczak, A. Rucinski, {\it Random Graphs}, New York, Wiley, 2000.

\bibitem{Shelah} S. Shelah, J.H. Spencer, \emph{Zero-one laws for sparse random graphs}, J. Amer. Math. Soc., \textbf{1} (1988), 97--115.

\bibitem{Spencer_inf} J.H. Spencer, {\it Infinite spectra in the first order theory of graphs},  Combinatorica, {\bf 10}:1 (1990), 95--102.
    
\bibitem{Spencer_Thresholds_Via} J.H. Spencer, {\it Threshold spectra via the Ehrenfeucht game}, Discrete Applied Mathematics, {\bf 30} (1991), 235--252.

\bibitem{Zhuk0} M.E. Zhukovskii, {\it Zero-one $k$-law}, Discrete Mathematics, {\bf 312} (2012), 1670--1688.

\bibitem{ZhukOstr} L.B. Ostrovskii, M.E. Zhukovskii, {\it First-order and monadic Properties of highly sparse random graphs}, Doklady Mathematics, {\bf 94}:2 (2016), 555--557 (Russian Original: Doklady Akademii Nauk, {\bf 470}:5 (2016), 499--501.

\bibitem{Zhuk_inf} M.E. Zhukovskii, {\it On Infinite Spectra of First Order Properties of Random Graphs}, Moscow Journal of Combinatorics and Number Theory, {\bf 6}:4 (2016), 73--102.

\bibitem{Erdos} P. Erd\H{o}s, A. R\'{e}nyi, {\it On the evolution of random graphs}, Magyar Tud. Akad. Mat. Kutat\'{o} Int. K\"{o}zl., \textbf{5} (1960), 17--61.

\bibitem{Bol_small} B.~Bollob\'{a}s, {\it Threshold functions for small subgraphs}, Math. Proc. Camb. Phil. Soc., \textbf{90} (1981), 197--206.

\bibitem{Rucinski} A. Ruci\'{n}ski, A. Vince, {\it Balanced graphs and the problem of subgraphs of a random graph}, Congressus Numerantim, {\bf 49} (1985), 181--190.

\bibitem{Spencer_ext} Spencer J.H., {\it Threshold functions for extension statements}, J. of Comb. Th. Ser A, {\bf 53} (1990), 286--305.

\bibitem{Alon} N.~Alon, J.H.~Spencer, \emph{The Probabilistic Method}, John Wiley $\&$ Sons, 2000.

\bibitem{Zhuk_ext} M.E. Zhukovskii, {\it Estimation of the number of maximal extensions in the random graph}, Discrete Mathematics and Applications, {\bf 24}:1 (2012), 79--107.

\bibitem{Logic2} H.-D. Ebbinghaus, J. Flum, \emph{Finite model theory}, Perspectives in Mathematical Logic. Springer-Verlag, Berlin, second edition, 1999.

\bibitem{Ehren} A. Ehrenfeucht, {\it An application of games to the completness problem for formalized theories}, Warszawa, Fund. Math. \textbf{49} (1960), 121--149.

\bibitem{Pikhurko} O. Pikhurko, J. Spencer, and O. Verbitsky, {\it Succinct definitions in the first order graph theory}, Annals of Pure and Applied Logic \textbf{139}:1-3 (2006), 74--109.

\end{thebibliography}
\end{document}